\theoremstyle{plain}
\newtheorem{prob}{Problem}[section]
\newtheorem{defin}[prob]{Definition}
\newtheorem{theorem}[prob]{Theorem}
\newtheorem*{theorem*}{Theorem}
\newtheorem*{question*}{Question}
\newtheorem{prop}[prob]{Proposition}
\newtheorem{corollary}[prob]{Corollary}
\newtheorem{lemma}[prob]{Lemma}
\theoremstyle{remark}
\newtheorem{remark}{Remark}
\newcommand{\R}{\mathbb{R}}
\newcommand{\N}{\mathbb{N}}
\newcommand{\adef}{\begin{defin}}
\newcommand{\zdef}{\end{defin}}
\newcommand{\ra}{\rightarrow}
\newcommand{\dist}{\textrm{dist}}
\newcommand{\Dom}{\mathrm{Dom}}
\newcommand{\Ran}{\mathrm{Ran}}
\newcommand{\dPsiker}{\Psi(\ker\Phi)\oplus_{\Omega_{\Psi,\Phi}} X_\Phi}
\newcommand{\dPsiZ}{Z\oplus_{\Omega_{\Psi,\Phi}} X_\Phi}
\title[Differential processes generated by two interpolators]{Differential
processes generated by two interpolators}
\subjclass{Primary: 46B70, 46E30, 46M18.}
\author[J.M.F. Castillo, W.H.G. Corr\^ea, V. Ferenczi, M. Gonz\'alez]{Jes\'us M. F. Castillo,
Willian H. G. Corr\^ea, Valentin Ferenczi, Manuel Gonz\'alez}
\address{Instituto de Matem\'aticas, Universidad de Extremadura,
Avenida de Elvas s/n, 06011 Badajoz, Spain.} \email{castillo@unex.es}
\address{Departamento de Matem\'atica, Instituto de Matem\'atica e
Estat\'\i stica, Universidade de S\~ao Paulo, rua do Mat\~ao 1010,
05508-090 S\~ao Paulo SP, Brazil} \email{willhans@ime.usp.br}
\address{Departamento de Matem\'atica, Instituto de Matem\'atica e
Estat\'\i stica, Universidade de S\~ao Paulo, rua do Mat\~ao 1010,
05508-090 S\~ao Paulo SP, Brazil  \\ and Equipe d'Analyse Fonctionnelle \\
Institut de Math\'ematiques de Jussieu \\
Universit\'e Pierre et Marie Curie - Paris 6 \\
Case 247, 4 place Jussieu \\
75252 Paris Cedex 05 \\
France.} \email{ferenczi@ime.usp.br}
\address{Departamento de Matem\'aticas, Universidad de Cantabria,
Avenida de los Castros s/n, 39071 Santander, Spain.} \email{manuel.gonzalez@unican.es}
\subjclass[2010]{46B70, 46E30, 46M18 }
\thanks{The research of the first author has been supported in part by Project IB16056 de la Junta
de Extremadura; the research of the first and fourth authors has been supported in part by Project
MTM2016-76958, Spain.
The research of the second author has been supported in part by CNPq, grant 140413/2016-2, CAPES,
PDSE program 88881.134107/2016-0, and São Paulo Research Foundation (FAPESP), grants 2016/25574-8
and 2018/03765-1.
The research of the third author has been supported by FAPESP, grants 2013/11390-4, 2015/17216-1,
2016/25574-8 and by CNPq, grant 303034/2015-7}
\begin{document}

\begin{abstract} We study couples of interpolators, the differentials they generate and their associated commutator theorems.
An essential part of our analysis is the study of the intrinsic symmetries of the process. Since we work without any compatibility
or categorical assumption, our results are flexible enough to generalize most known results for commutators or translation operators, in particular those of Cwikel, Kalton, Milman, Rochberg \cite{ckmr} for differential methods and those of Carro, Cerd\`a and Soria \cite{caceso} for compatible interpolators. We also generalize stability and singularity results in \cite{cfg,ccfg,correa} from the complex method to general differential methods and obtain new incomparability results.
\end{abstract}

\maketitle


\section{Introduction}\label{introduction}
In this paper we consider abstract interpolation methods and the differential processes they generate.
Our approach is free of categorical elements in order to avoid unnecessary complications.
We also avoid any compatibility condition. That avoidance is necessary in order to make the interpolator symmetries emerge, and also because considering non-compatible interpolators opens the door to treating translation operators as derivations, or to obtaining the general symmetric commutator results
that can be found in Section \ref{sect:commutator}.

We study couples of interpolators, the differentials they generate and their associated commutator theorems.
An essential part of our analysis is the study of the intrinsic symmetries of the process that allow us to jump from a couple
of interpolators $(\Psi, \Phi)$ to the symmetric couple $(\Phi, \Psi)$. Since we work without any compatibility
or categorical assumption, our results are flexible enough to generalize most known results for commutators or translation operators, in particular those of Cwikel, Kalton, Milman, Rochberg \cite{ckmr} for differential methods and those of Carro, Cerd\`a and Soria \cite{caceso} for compatible interpolators. In the last section we generalize stability and singularity results in \cite{cfg,ccfg,correa} from the complex method to general differential methods and obtain new incomparability results.

\section{Preliminaries}\label{firstresults}

The necessary background on the theory of twisted sums and diagrams can be seen in \cite{castgonz}.
A twisted sum of two quasi-Banach spaces $Y$, $Z$ is  a quasi-Banach space $X$ which has a closed subspace
isomorphic to $Y$ such that the quotient $X/Y$ is isomorphic to $Z$.

An exact sequence (of quasi-Banach spaces and linear continuous operators) is a diagram
$$\begin{CD}
0@>>>  Y@>>> X @>>> Z@>>> 0\end{CD}
$$
in which the kernel of each arrow coincides with the image of the preceding one.
Thus, the open mapping theorem yields that the middle space $X$ is a twisted sum of $Y$ and $Z$. The simplest exact sequence is $0 \to Y \to Y\oplus Z \to Z \to 0$ with embedding $y\to (y,0)$ and
quotient map $(y,z)\to z$. Two exact sequences $0 \to Y \to X_j \to Z \to 0$, $ j = 1, 2$, are said to be
{\it equivalent} if there exists an operator $T:X_1\to X_2$ such that the following diagram commutes:
$$
\begin{CD}
0 @>>>Y@>>>X_1@>>>Z@>>>0\\
&&@| @VVTV @|\\
0 @>>>Y@>>>X_2@>>>Z@>>>0.
\end{CD}$$

By the 3-lemma \cite[p. 3]{castgonz} $T$ must be an isomorphism.
An exact sequence is said to be \emph{trivial}, or to \emph{split}, if it is equivalent to
$0 \to Y \to Y \oplus Z \to Z \to 0$.
Observe that the exact sequence splits if and only if the subspace $Y$ of $X$ is complemented.
\medskip

Let $M, N$ be closed subspaces of a Banach space $Z$, and let $S_M$ denote the unit sphere of $M$.
The \emph{gap $g(M,N)$ between $M$ and $N$} is defined by
$$
g(M,N) = \max \big\{\sup_{x\in S_M}\dist(x,N), \sup_{y\in S_N}\dist(y,M)\big\},
$$
and the \emph{minimum gap $\gamma(M,N)$ between $M$ and $N$} is defined by
$$
\gamma(M,N) = \inf_{u\in M\setminus N} \frac{\dist(u,N)}{\dist(u,M\cap N)}.
$$

Note that $M+N$ is a closed subspace of $Z$ if and only if $\gamma(M,N)>0$ \cite[Theorem IV.4.2]{Kato:80}.

\begin{prop}\label{prop:gap-stab}
Let $M$ and $N$ be closed subspaces of $Z$ such that $M+N$ is closed, and let us denote
$R= (1/2)\min\{\gamma(M,N),\gamma(N,M)\}$.
If $M_1$ and $N_1$ are closed subspaces of $Z$ and $g(M_1,M)+ g(N_1,N)< R$, then
\begin{enumerate}
\item  $M\cap N=\{0\}$ implies $M_1\cap N_1=\{0\}$ and $M_1+N_1$ is closed.
\item  $M+N=Z$ implies $M_1+N_1=Z$.
\end{enumerate}
In particular, if $Z=M\oplus N$ and $g(M_1,M)<R$ then $Z=M_1\oplus N$; i.\ e., the property of
a subspace being complemented is open with respect to the gap.
\end{prop}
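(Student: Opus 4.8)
The plan is to derive everything from two elementary consequences of the definitions: (i) for $x$ in $M_1$ or in $M$ one has $\dist(x,M)\le g(M_1,M)\,\|x\|$ and $\dist(x,M_1)\le g(M_1,M)\,\|x\|$, and likewise with $N,N_1$; (ii) for every $u\in M$ one has $\dist(u,N)\ge\gamma(M,N)\,\dist(u,M\cap N)$, which under the hypothesis $M\cap N=\{0\}$ reads $\dist(u,N)\ge\gamma(M,N)\,\|u\|$. Put $a=g(M_1,M)$, $b=g(N_1,N)$ and $\gamma_1=\gamma(M,N)$; since $M+N$ is closed, $\gamma_1>0$, and $\gamma_1\le1$ always (because $M\cap N\subseteq N$ forces $\dist(u,N)\le\dist(u,M\cap N)$), so the hypothesis gives $a+b<R\le\gamma_1/2\le1/2$. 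In fact only $\gamma(M,N)$ will enter the estimates; the minimum with $\gamma(N,M)$ in $R$ just makes the hypothesis symmetric in $M$ and $N$.

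For Part (1): given $x\in M_1\cap N_1$ with $\|x\|=1$, choose $m\in M$ with $\|x-m\|\le a+\e$; then $\|m\|\ge1-a-\e>0$, and comparing $\dist(m,N)\le\|m-x\|+\dist(x,N)\le a+b+\e$ (from (i)) with $\dist(m,N)\ge\gamma_1\|m\|\ge\gamma_1(1-a-\e)$ (from (ii)) and letting $\e\to0$ yields $\gamma_1(1-a)\le a+b<\gamma_1/2$, hence $a>1/2$, contradicting $a\le a+b<1/2$; so $M_1\cap N_1=\{0\}$. For the closedness of $M_1+N_1$, by \cite[Theorem IV.4.2]{Kato:80} it suffices to show $\gamma(M_1,N_1)>0$, which---since $M_1\cap N_1=\{0\}$---means $\inf_{u\in S_{M_1}}\dist(u,N_1)>0$. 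Fix $u\in S_{M_1}$; as $0\in N_1$ the infimum defining $\dist(u,N_1)$ may be restricted to $v\in N_1$ with $\|v\|\le2$, and for such $v$ fact (i) gives $w\in N$ with $\|v-w\|\le2b+\e$, so $\|u-v\|\ge\dist(u,N)-2b-\e$. Bounding $\dist(u,N)$ from below exactly as before (pick $m\in M$ with $\|u-m\|\le a+\e$, whence $\dist(u,N)\ge\gamma_1\|m\|-a-\e\ge\gamma_1(1-a-\e)-a-\e$) and letting $\e\to0$ gives $\dist(u,N_1)\ge\gamma_1-(\gamma_1+1)a-2b\ge\gamma_1-2a-2b>0$, using $\gamma_1\le1$ and $2(a+b)<\gamma_1$.

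Part (2) is the step I expect to be the real obstacle: closeness of the gaps yields only $\dist(z,M_1+N_1)\le\theta\|z\|$ with $\theta<1$, which does not by itself give $M_1+N_1=Z$ since $M_1+N_1$ need not be closed a priori. The way around this is a norm-controlled successive approximation. First, using $M+N=Z$ and the definition of $\gamma_1$, every $z$ admits a decomposition $z=m+n$ with $m\in M$, $n\in N$ and $\|m\|\le(\gamma_1^{-1}+\e)\|z\|$, $\|n\|\le(1+\gamma_1^{-1}+\e)\|z\|$: write $z=\mu+\nu$ with $\mu\in M$, $\nu\in N$, pick $k\in M\cap N$ with $\|\mu-k\|\le\dist(\mu,M\cap N)+\e\|z\|$, and set $m=\mu-k$, $n=\nu+k$; then $\|m\|\le\dist(m,M\cap N)+\e\|z\|\le\gamma_1^{-1}\dist(m,N)+\e\|z\|\le\gamma_1^{-1}\|z\|+\e\|z\|$ (since $-n\in N$ and $m+n=z$). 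Now iterate: set $r_0=z$; given $r_{k-1}$, split it this way as $r_{k-1}=m^{(k)}+n^{(k)}$, use (i) to choose $m_1^{(k)}\in M_1$, $n_1^{(k)}\in N_1$ with $\|m^{(k)}-m_1^{(k)}\|\le a\|m^{(k)}\|+\e\|r_{k-1}\|$ and similarly for $n$, and put $r_k=r_{k-1}-m_1^{(k)}-n_1^{(k)}=(m^{(k)}-m_1^{(k)})+(n^{(k)}-n_1^{(k)})$. Then $\|r_k\|\le\theta\|r_{k-1}\|$ with $\theta=a\gamma_1^{-1}+b(1+\gamma_1^{-1})+O(\e)$, and $a+b<\gamma_1/2$ together with $\gamma_1\le1$ forces $a\gamma_1^{-1}+b\gamma_1^{-1}<1/2$ and $b<1/2$, so $\theta<1$ for $\e$ small. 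Since $\|m_1^{(k)}\|$ and $\|n_1^{(k)}\|$ are $O(\theta^{k-1}\|z\|)$, the series $\sum_k m_1^{(k)}$ and $\sum_k n_1^{(k)}$ converge in the complete spaces $M_1$ and $N_1$ to elements $m_1$, $n_1$, and $m_1+n_1=\lim_K(z-r_K)=z$; hence $M_1+N_1=Z$.

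Finally, the complementation statement is the case $N_1=N$, where $g(N_1,N)=0$: Parts (1) and (2) then give $M_1\cap N=\{0\}$, $M_1+N$ closed and $M_1+N=Z$, i.e. $Z=M_1\oplus N$, so $M_1$ is complemented whenever $g(M_1,M)<R$. The only delicate point is Part (2)'s a priori non-closedness of $M_1+N_1$; the remedy is to carry the quantitative control coming from $\gamma(M,N)$ through every step of the approximation so that the correction series lie inside the (complete) subspaces $M_1$ and $N_1$.
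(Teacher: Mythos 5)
Your proof is correct, but it follows a different route from the paper's in Part (2). For Part (1) you rederive by hand the estimate the paper obtains by citing Kato IV Lemma 4.4 (which relates $\dist(u,M\cap N)$ to $\dist(u,M)+\dist(u,N)$ via $\gamma(M,N)$); the content is the same, your version being more self-contained while the paper's is shorter. For Part (2) the approaches diverge genuinely: the paper reduces (2) to (1) by duality, invoking the identities $g(M,N)=g(M^\perp,N^\perp)$ and $\gamma(M,N)=\gamma(N^\perp,M^\perp)$ together with the fact that $M+N=Z$ holds iff $M^\perp\cap N^\perp=\{0\}$ and $M^\perp+N^\perp$ is closed. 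You instead construct the decomposition directly by a Neumann-type iteration, carrying the quantitative control coming from $\gamma(M,N)$ through each corrective step so that the partial sums converge inside the complete subspaces $M_1$ and $N_1$. Your observation that the naive ``approximate surjectivity'' step $\dist(z,M_1+N_1)\le\theta\|z\|$ does not by itself give $M_1+N_1=Z$, because $M_1+N_1$ need not be closed a priori, is exactly the subtlety, and your iterative remedy is sound; the paper's dual argument sidesteps it entirely. In short, your approach buys self-containedness and avoids dual-space machinery at the cost of a longer argument, while the paper's is slicker but relies on the annihilator identities for the gap and minimum gap.
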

\begin{proof}
(1) Since $M\cap N=\{0\}$ and $M+N$ is closed, $\gamma(M,N) =  \inf_{u\in S_M} \dist(u,N)>0$.

Suppose that there exists $u\in M_1\cap N_1$ with $\|u\|=1$.
Since $\dist(u,M) \leq g(M_1,M)$ and $\dist(u,N)\leq g(N_1,N)$, our hypothesis implies
$$
\dist(u,M) + \dist(u,N) < \frac{1}{2} \gamma(M,N)
$$
and $\dist(u,M\cap N)=1$, contradicting \cite[IV Lemma 4.4]{Kato:80}.
Hence $M_1\cap N_1=\{0\}$. A similar argument shows that $M_1+N_1$ is closed.
Indeed, otherwise for every $\varepsilon>0$ we could find $u\in M_1$ and $v\in N_1$
with $\|u\|=\|v\|=1$ and $\|u-v\|<\varepsilon$.
Therefore $\dist(u,M)\leq g(M_1,M)$ and $\dist(u,N)\leq g(N_1,N)+\varepsilon$, and
\cite[IV Lemma 4.4]{Kato:80} would imply
$$
\frac{1}{2}\gamma(M, N) \leq g(M_1, M) +g(N_1, N) +\epsilon
$$
for every $\epsilon > 0$, contradicting the hypothesis.

(2) Let $M^\perp$ denote the annihilator of $M$ in $Z^*$.
Since $M+N=Z$ if and only if $M^\perp\cap N^\perp=\{0\}$ and $M^\perp +N^\perp$ is closed,
$g(M,N)=g(M^\perp,N^\perp)$ and $\gamma(M,N)=\gamma(N^\perp,M^\perp)$ \cite[Chapter IV]{Kato:80},
it is a consequence of (1).
\end{proof}

We refer to \cite[Chapter IV]{Kato:80} for additional information on the gap between subspaces.
\bigskip

Given Banach spaces $X$ and $Y$ and a bounded operator $T:X\to Y$, it is usual in interpolation
theory to endow $T(X)$ with the associated \emph{quotient norm}, defined as follows:
$$
\|Tx\|_q = \|Tx\|_T = \inf \{\|z\| : Tx=Tz\}=\dist(x,\ker T).
$$
Since the operator $T$ induces an isometry $x+\ker T\to Tx$ from $X/\ker T$ onto $(T(X),\|\cdot\|_q)$,
the latter space is a Banach space.

\section{The two symmetric derivations generated by a pair of interpolators}
\label{symmetric-sequences}

We adopt the language of \cite{caceso}, simpler than the one used in \cite{ckmr}, but we omit the functor
terminology because it is not necessary in our context.
We consider an interpolation couple $(X_0, X_1)$ of Banach spaces  continuously embedded into their sum
$\Sigma=X_0+X_1$, which admits a norm $\|\cdot\|_\Sigma$ making it a Banach space (see \cite{BL}). An operator $t:\Sigma\to \Sigma$ is said to \emph{act on the scale} if $t: X_i\to X_i$ is continuous
for $i=0,1$.

An \emph{abstract interpolation method} is generated by
\begin{enumerate}
    \item a Banach space $\mathcal{H}$ of functions
$f:D\to \Sigma$ on a metric space $D$ of parameters such that every operator $t$ acting on the scale generates an operator $T : \mathcal{H} \rightarrow \mathcal{H}$.
    \item an \emph{interpolator on $\mathcal H$},
which is a continuous linear operator $\Phi:\mathcal H\to \Sigma$ such that if $t$ is an operator acting on the scale then $t \circ \Phi = \Phi \circ T$.
\end{enumerate}

We denote by $X_\Phi$ the space $\Phi(\mathcal H)$ endowed with the quotient norm
$$
\|x\|_\Phi =\inf \{\|f\|_\mathcal{H} : f\in\mathcal{H}, \Phi f= x\}\quad \textrm{($x\in X_\Phi$)},
$$
which is a Banach space, and we fix a homogeneous map $B_\Phi: X_\Phi\to \mathcal H$ such that
$\Phi B_\Phi x= x$ and there is $\varepsilon>0$ so that
$\|B_\Phi(x)\|_{\mathcal H} \leq (1+\varepsilon)\|x\|_{\Phi}$ for each $x\in X_\Phi$.

If $t: \Sigma \to \Sigma$ is an operator on the scale then we get an ``interpolated operator" $t: X_\Phi\to X_\Phi$ since
$$
\|t(\Phi f) \|_\Phi = \|t(\Phi B_\Phi \Phi f)\|_\Phi = \|\Phi(TB_\Phi\Phi f)\|_\Phi \leq
\|\Phi\|\cdot\|T\|(1+\varepsilon)\|\Phi(f)\|_\Phi.
$$

To an abstract interpolation method one usually associates a sequence of interpolators.
A typical example is the complex interpolation method for $(X_0,X_1)$, in which $D$ is the unit strip
in the complex plane $\mathbb S = \{z\in \mathbb C: 0< Re(z)< 1\}$, $\mathcal H$ is the Calderon space
of continuous bounded functions $\overline{\mathbb S}\to \Sigma$, which are holomorphic on $\mathbb S$,
and the maps $t\mapsto f(it)\in X_0$ and $t\mapsto f(1+it)\in X_1$ are continuous and bounded, endowed
with the norm $\|f\|_{\mathcal H} =\sup \{ \|f(it)\|_{X_0}, \|f(1+ it)\|_{X_1}\}$, and the interpolator
is $\Phi = \delta_\theta$, the evaluation map at some $\theta$ in the interior of $\mathbb S$.
In this case the sequence of interpolators is formed by the evaluation operators $\delta_\theta^{(n)}$
of the $n^{th}$ derivative at $\theta$.
In this paper we will focus on the ``first" two terms of the sequence of interpolators.
\medskip

Let $(\Psi,\Phi)$ be a pair of interpolators on $\mathcal H$, let
$(\Psi,\Phi): \mathcal{H} \to \Sigma\times \Sigma$ be the map defined by $(\Psi,\Phi)f= (\Psi f,\Phi f)$,
and let $X_{\Psi,\Phi}$ denote the space $(\Psi,\Phi)(\mathcal H)= \{ (\Psi(f), \Phi(f)): f\in \mathcal H\}$,
endowed with the quotient norm.
%
We consider the following commutative diagram:
\begin{equation}\label{psidiagram}
\begin{CD}
&&\ker \Psi \cap \ker \Phi  @= \ker (\Psi, \Phi)\\
&&@VVV @VVV\\
0 @>>> \ker \Phi  @>>> \mathcal H @>{\Phi}>> X_\Phi @>>>0\\
&&@V{\Psi}VV @VV{(\Psi, \Phi)}V @|\\
0 @>>> \Psi(\ker \Phi) @>i>> X_{\Psi,\Phi}  @>p>> X_\Phi @>>>0
\end{CD}\end{equation}
where $\Psi(\ker \Phi)$ is endowed with the quotient norm $\|\cdot\|_q$ associated to the restriction
of $\Psi$ to $\ker\Phi$, and $i,p$ are defined by $i\Psi g=(\Psi g,0)$ and $p(\Psi f,\Phi f)=\Phi f$.
So the rows are exact.

\begin{defin}
The \emph{derivation} associated to the pair $(\Psi, \Phi)$ is the map
$\Omega_{\Psi,\Phi}: X_\Phi\to \Sigma$ given by $\Omega_{\Psi,\Phi} = \Psi B_\Phi$.
\end{defin}

The derivation $\Omega_{\Psi,\Phi}$ generates the so-called \emph{derived space}
$$
d\Omega_{\Psi,\Phi}:=\dPsiker =\{(w,x)\in\Sigma\times X_\Phi: w-\Omega_{\Psi,\Phi}x\in \Psi(\ker\Phi)\},
$$
endowed with $\|(w,x)\|_{\Omega_{\Psi,\Phi}}= \|w-\Omega_{\Psi,\Phi} x\|_q + \|x\|_\Phi$.

\begin{remark}\label{rem:qnorm}
$\|(\cdot,\cdot)\|_{\Omega_{\Psi,\Phi}}$ is a quasi-norm because
$B_\Phi(x+y)-B_\Phi(x)-B_\Phi(y)\in \ker \Phi$, hence
\begin{equation}\label{centralizer}
\left \|\Omega_{\Psi,\Phi}(x+y) - \Omega_{\Psi,\Phi} (x) - \Omega_{\Psi,\Phi} (y) \right\|_\Phi
\leq 2(1+\varepsilon)\|\Psi: \ker \Phi\to \Psi(\ker \Phi)\| \left(\|x\|_\Phi + \|y\|_\Phi\right).
\end{equation}
Note also that $\|(\cdot,\cdot)\|_{\Omega_{\Psi,\Phi}}$ depends on the choice made defining $B_\Phi$, but
$d\Omega_{\Psi,\Phi}$ does not.
\end{remark}

We get an exact sequence
\begin{equation}\label{exact-seq-bis}
\begin{CD}
0 @>>> \Psi(\ker\Phi) @>j>> d\Omega_{\Psi,\Phi}  @>q>> X_\Phi @>>>0
\end{CD}
\end{equation}
with inclusion $jw =(w,0)$ and quotient map $q(w,x)= x$.

\begin{prop}\label{isomorphism}
The lower row of diagram (\ref{psidiagram}) is an exact sequence equivalent to (\ref{exact-seq-bis}).
In particular $X_{\Psi,\Phi}$ is isomorphic to $d\Omega_{\Psi,\Phi}$.
\end{prop}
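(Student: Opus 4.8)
The plan is to write down the obvious candidate for the equivalence and let the $3$-lemma do the rest. First note that the rows of (\ref{psidiagram}) are exact — $i$ is injective, $p$ is onto because $p(\Psi f,\Phi f)=\Phi f$, and $\ker p=\{(\Psi f,0):f\in\ker\Phi\}$ is exactly the range of $i$ — so the task reduces to producing an operator $T\colon X_{\Psi,\Phi}\to d\Omega_{\Psi,\Phi}$ that is the identity on $\Psi(\ker\Phi)$ and on $X_\Phi$ and makes the diagram commute. I would set $T(\Psi f,\Phi f)=(\Psi f,\Phi f)$, now reading the pair as an element of the derived space. The first thing to verify is that this is meaningful: $\Psi f-\Omega_{\Psi,\Phi}(\Phi f)=\Psi f-\Psi B_\Phi\Phi f=\Psi\big(f-B_\Phi\Phi f\big)$, and $\Phi(f-B_\Phi\Phi f)=\Phi f-\Phi f=0$, so $f-B_\Phi\Phi f\in\ker\Phi$ and hence $\Psi f-\Omega_{\Psi,\Phi}(\Phi f)\in\Psi(\ker\Phi)$; that is, $(\Psi f,\Phi f)$ genuinely lies in $d\Omega_{\Psi,\Phi}$. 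The same computation shows $T$ is independent of the representative and is linear; in fact, running it backwards — given $(w,x)$ with $w-\Omega_{\Psi,\Phi}x=\Psi h$, $h\in\ker\Phi$, take $f=h+B_\Phi x$ — shows that $X_{\Psi,\Phi}$ and $d\Omega_{\Psi,\Phi}$ are literally the same subset of $\Sigma\times X_\Phi$, so the content of the proposition is precisely that the two norms on this set are equivalent.

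Next I would check that $T$ is bounded. Fix $(\Psi f,\Phi f)$ and choose, for $\delta>0$, a representative $g\in\mathcal H$ with $(\Psi g,\Phi g)=(\Psi f,\Phi f)$ and $\|g\|_{\mathcal H}\le(1+\delta)\|(\Psi f,\Phi f)\|_{X_{\Psi,\Phi}}$. Then $\|\Phi f\|_\Phi=\|\Phi g\|_\Phi\le\|g\|_{\mathcal H}$, while, using $g-B_\Phi\Phi g\in\ker\Phi$ together with $\|B_\Phi\Phi g\|_{\mathcal H}\le(1+\varepsilon)\|\Phi g\|_\Phi\le(1+\varepsilon)\|g\|_{\mathcal H}$, one gets
$$\|\Psi f-\Omega_{\Psi,\Phi}(\Phi f)\|_q=\|\Psi(g-B_\Phi\Phi g)\|_q\le\|g-B_\Phi\Phi g\|_{\mathcal H}\le(2+\varepsilon)\|g\|_{\mathcal H}.$$
Summing the two estimates and letting $\delta\to 0$ yields $\|T(\Psi f,\Phi f)\|_{\Omega_{\Psi,\Phi}}\le(3+\varepsilon)\|(\Psi f,\Phi f)\|_{X_{\Psi,\Phi}}$. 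Commutativity of the diagram is then immediate from the definitions: $Ti(\Psi g)=T(\Psi g,0)=(\Psi g,0)=j(\Psi g)$ for $g\in\ker\Phi$, and $qT(\Psi f,\Phi f)=\Phi f=p(\Psi f,\Phi f)$.

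Finally, $T$ is a bounded linear map fitting into a commutative morphism between the short exact sequences (\ref{exact-seq-bis}) and the lower row of (\ref{psidiagram}) which is the identity on the two outer spaces $\Psi(\ker\Phi)$ and $X_\Phi$; by the $3$-lemma \cite[p.~3]{castgonz} it is an isomorphism, so the sequences are equivalent and in particular $X_{\Psi,\Phi}\cong d\Omega_{\Psi,\Phi}$. I do not expect a genuine obstacle. The only steps requiring care are checking that the image of $T$ really lands in $d\Omega_{\Psi,\Phi}$ (the well-definedness computation above) and the norm estimate making $T$ bounded — and even the latter could be bypassed, since the explicit backward description $f=h+B_\Phi x$ also gives a bounded inverse directly, after which one invokes the open mapping theorem instead of the $3$-lemma.
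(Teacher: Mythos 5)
Your proof is correct and follows essentially the same approach as the paper: both exhibit the identity map as the equivalence, verify it makes the diagram commute, prove a one-sided norm bound, and invoke the $3$-lemma. The only minor difference is direction — you bound $\|\cdot\|_{\Omega_{\Psi,\Phi}}$ by $\|\cdot\|_{X_{\Psi,\Phi}}$, while the paper bounds $\|\cdot\|_{X_{\Psi,\Phi}}$ by $\|\cdot\|_{\Omega_{\Psi,\Phi}}$; either suffices.
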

\begin{proof}

We will show that the identity operator  $(w,x) \to (w, x)$ makes commutative the diagram
\begin{equation}\label{equiv-seq}
\begin{CD}
0 @>>> \Psi(\ker\Phi) @>i>> d\Omega_{\Psi,\Phi}  @>p>> X_\Phi @>>>0\\
&&@| @VVV @|\\
0 @>>> \Psi(\ker\Phi) @>j>> X_{\Psi,\Phi}  @>q>> X_\Phi @>>>0\end{CD}
\end{equation}
showing that the rows are equivalent exact sequences.

Fix $C \geq 1$ and let $(w,x)\in d\Omega_{\Psi,\Phi}$.
Since $w-\Omega_{\Psi,\Phi} x\in \Psi(\ker\Phi)$, $w - \Omega_{\Psi,\Phi} x = \Psi f$ for some
$f \in \ker \Phi$ with $\|f\|_{\mathcal H} \leq C \|w - \Omega_{\Psi,\Phi} x\|_q$.
Thus $w = \Omega_{\Psi,\Phi} x + \Psi f = \Psi( B_{\Phi} x + f)$ and therefore
$(w,x) = (\Psi( B_{\Phi} x + f), \Phi( B_{\Phi} x + f))\in X_{\Psi,\Phi}$ with
\begin{eqnarray*}
\|(w,x)\| &\leq&  \|B_{\Phi} x + f\| \leq (1+\varepsilon)\|x\| + C\|w - \Omega_{\Psi,\Phi} x\|
\leq (1+\varepsilon+C)\|(w,x)\|_{\Omega_{\Psi, \Phi}}.
\end{eqnarray*}
\end{proof}

Next we follow the concluding sections of \cite{cabenon} by studying the domain and range
spaces associated to $\Omega_{\Psi, \Phi}$. See also \cite{caceso} for a similar analysis.

\begin{defin}\label{def:Dom-Ran-bis}
Let $(\Psi, \Phi)$ be a pair of interpolators on $\mathcal{H}$.
The \emph{domain} and \emph{range} of $\Omega_{\Psi, \Phi}$ \emph{with respect to the exact sequence
(\ref{exact-seq-bis})} are defined as follows:
$$
\Dom(\Omega_{\Psi,\Phi})= \{x\in X_\Phi: \Omega_{\Psi, \Phi}(x)\in \Psi(\ker\Phi)\}
$$
endowed with the quasi-norm\;
$\|x\|_{\Dom(\Omega_{\Psi, \Phi})} = \|\Omega_{\Psi, \Phi} x\|_q + \|x\|_\Phi$ and
$$
\Ran(\Omega_{\Psi,\Phi})= \{w\in\Sigma: \exists x\in X_\Phi,\; w-\Omega_{\Psi,\Phi}(x)\in \Psi(\ker\Phi)\}
$$
endowed with\; $\|w\|_{\Ran(\Omega_{\Psi, \Phi})} = \inf \{\|w - \Omega_{\Psi, \Phi}(x)\|_q +
\|x\|_\Phi : x\in X_\Phi,\; w - \Omega_{\Psi, \Phi}(x)\in \Psi(\ker\Phi)\}$.
\end{defin}

Observe that $\Ran(\Omega_{\Psi, \Phi})$ is different from the set $Rang(\Omega_A)$ considered in
\cite[Definition 10]{caceso}, which is not a vector space in general.
Here we follow \cite{cabenon}.

\begin{prop}\label{dom-ran}
The maps $J x= (0,x)$ and $Q(w,y)=w$ define an exact sequence
\begin{equation}\label{eq:seq-DomRan}
\begin{CD}
0@>>> \Dom(\Omega_{\Psi,\Phi}) @>J>> d\Omega_{\Psi,\Phi} @>Q>> \Ran(\Omega_{\Psi,\Phi}) @>>>0.
\end{CD}
\end{equation}
with $\|J x\|_{\Omega_{\Psi,\Phi}}=\|x\|_\Dom$ and
$\|w\|_\Ran =\inf\{\|(w,x)\|_{\Omega_{\Psi,\Phi}}: (w,x)\in d\Omega_{\Psi, \Phi} \}$.
\end{prop}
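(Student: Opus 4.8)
The plan is to verify, against the definition of an exact sequence of quasi-Banach spaces recalled in Section~\ref{firstresults}, that the two maps $J$ and $Q$ are well defined between the stated spaces, that the two norm identities hold, and that the kernel coincides with the image at each node; completeness of the two outer spaces will come for free. For well-definedness: if $x\in\Dom(\Omega_{\Psi,\Phi})$ then $\Omega_{\Psi,\Phi}x\in\Psi(\ker\Phi)$, and since $\Psi(\ker\Phi)$ is a linear subspace we get $0-\Omega_{\Psi,\Phi}x\in\Psi(\ker\Phi)$, so $Jx=(0,x)\in d\Omega_{\Psi,\Phi}$. Conversely, if $(w,x)\in d\Omega_{\Psi,\Phi}$ then $w-\Omega_{\Psi,\Phi}x\in\Psi(\ker\Phi)$ by definition of the derived space, which is exactly the condition (with witness $x$) placing $w$ in $\Ran(\Omega_{\Psi,\Phi})$; hence $Q(w,x)=w$ does land in $\Ran(\Omega_{\Psi,\Phi})$.

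Next I would check the norm formulas. Directly from the definition of $\|\cdot\|_{\Omega_{\Psi,\Phi}}$ and from Definition~\ref{def:Dom-Ran-bis},
$$\|Jx\|_{\Omega_{\Psi,\Phi}}=\|0-\Omega_{\Psi,\Phi}x\|_q+\|x\|_\Phi=\|\Omega_{\Psi,\Phi}x\|_q+\|x\|_\Phi=\|x\|_{\Dom(\Omega_{\Psi,\Phi})},$$
so $J$ is an isometric embedding. For $Q$, unravelling the definitions shows that the set $\{\|(w,x)\|_{\Omega_{\Psi,\Phi}}:x\in X_\Phi,\ (w,x)\in d\Omega_{\Psi,\Phi}\}$ is precisely $\{\|w-\Omega_{\Psi,\Phi}x\|_q+\|x\|_\Phi:x\in X_\Phi,\ w-\Omega_{\Psi,\Phi}x\in\Psi(\ker\Phi)\}$, whose infimum is $\|w\|_{\Ran(\Omega_{\Psi,\Phi})}$ by Definition~\ref{def:Dom-Ran-bis}. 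Thus the norm attached to $\Ran(\Omega_{\Psi,\Phi})$ in Definition~\ref{def:Dom-Ran-bis} is exactly the quotient norm induced by $Q$, and in particular $Q$ has norm at most one.

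For exactness: $Q$ is onto because, given $w\in\Ran(\Omega_{\Psi,\Phi})$, a witness $x\in X_\Phi$ yields $(w,x)\in d\Omega_{\Psi,\Phi}$ with $Q(w,x)=w$; and $\ker Q=\{(0,x)\in d\Omega_{\Psi,\Phi}\}=\{(0,x):x\in X_\Phi,\ -\Omega_{\Psi,\Phi}x\in\Psi(\ker\Phi)\}=J\big(\Dom(\Omega_{\Psi,\Phi})\big)$. Since $Q$ is bounded, $\ker Q$ is a closed subspace of $d\Omega_{\Psi,\Phi}$, which is quasi-Banach (it is the middle term of the exact sequence (\ref{exact-seq-bis}), or isomorphic to $X_{\Psi,\Phi}$ by Proposition~\ref{isomorphism}); hence $J$ is an isomorphism onto $\ker Q$ and $\Dom(\Omega_{\Psi,\Phi})$ is complete, while $\Ran(\Omega_{\Psi,\Phi})\cong d\Omega_{\Psi,\Phi}/\ker Q$ with the quotient norm is complete as well. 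This gives the exact sequence (\ref{eq:seq-DomRan}) of quasi-Banach spaces with the asserted norms.

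I do not expect a genuine obstacle: the statement is bookkeeping around the single square defining $d\Omega_{\Psi,\Phi}$. The one point that deserves a line of care is matching the \emph{ad hoc} norm imposed on $\Ran(\Omega_{\Psi,\Phi})$ in Definition~\ref{def:Dom-Ran-bis} with the quotient norm coming from $Q$ — but, as indicated above, once both are written out they are literally the same infimum — together with the remark that completeness of the two outer spaces is not an extra assumption but a consequence of $d\Omega_{\Psi,\Phi}$ being quasi-Banach.
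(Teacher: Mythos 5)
Your bookkeeping (well-definedness of $J$ and $Q$, the two norm identities, surjectivity of $Q$, $\ker Q = J(\Dom(\Omega_{\Psi,\Phi}))$) matches the paper's and is fine. The one place where the argument does not close is the sentence "Since $Q$ is bounded, $\ker Q$ is a closed subspace of $d\Omega_{\Psi,\Phi}$." Boundedness of $Q$ gives closedness of $\ker Q = Q^{-1}(\{0\})$ only if $\{0\}$ is closed in $\Ran(\Omega_{\Psi,\Phi})$, i.e.\ only if the infimum in Definition~\ref{def:Dom-Ran-bis} is a genuine quasi-norm rather than a quasi-seminorm. But, as you yourself observe, $\|\cdot\|_{\Ran(\Omega_{\Psi,\Phi})}$ is the quotient quasi-seminorm of $d\Omega_{\Psi,\Phi}$ by $\ker Q$; for a quotient quasi-seminorm, being separating is \emph{equivalent} to the subspace being closed. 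So your justification presupposes exactly what it is supposed to establish.

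The paper sidesteps this by proving closedness of $\mathrm{Im}(J)$ directly and in a place where Hausdorffness is not in question: if $(0,x_n)\to(y,x)$ in $d\Omega_{\Psi,\Phi}$, then $x_n\to x$ in $X_\Phi$ and $y-\Omega_{\Psi,\Phi}(x-x_n)\to 0$ in $\Psi(\ker\Phi)$ with the quotient norm; since $B_\Phi$ is continuous at $0$ and both $X_\Phi$ and $\Psi(\ker\Phi)$ embed continuously into the honest Banach space $\Sigma$, one deduces $y=0$ in $\Sigma$, hence $y=0$. To repair your proof you should insert this (or the equivalent observation that $\|w\|_{\Ran}=0$ forces $w=0$ by passing to $\Sigma$) before invoking closedness; once that is done, your deduction of completeness of $\Dom$ and $\Ran$ from completeness of $d\Omega_{\Psi,\Phi}$ is a nice way of getting the subsequent corollary for free.
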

\begin{proof}
Note that $x\in \Dom(\Omega_{\Psi, \Phi})$ if and only if $(0,x)\in d\Omega_{\Psi,\Phi}$,
and $\|x\|_{\Dom(\Omega_{\Psi, \Phi})} =  \|(0,x)\|_{\Omega_{\Psi,\Phi}}$.
Therefore $J$ is an ``isometric" operator.

The image of $J$ is closed: if a sequence $(0,x_n)$ in $\textrm{Im}(J)$ converges to
$(y,x)\in d\Omega_{\Psi,\Phi}$, then $\lim_n x-x_n= 0$  and $\lim_n y -\Omega_{\Psi,\Phi}(x-x_n)=0$
in $X_{\Phi}$.
Since $\Omega_{\Psi, \Phi}:X_\phi\to\Sigma$ is continuous at $0$ (because $B_\Phi$ is so) and the
inclusion $X_\Phi\to\Sigma$ is continuous, $y=0$.

The map $Q$ is well-defined and surjective:
$(w,x)\in d\Omega_{\Psi,\Phi}$ implies $w-\Omega_{\Psi,\Phi}x\in X_\Phi$, hence
$w\in \Ran(\Omega_{\Psi,\Phi})$.
Moreover, $w\in \Ran(\Omega_{\Psi,\Phi})$ implies the existence of $x\in X_\Phi$ such that
$w-\Omega_{\Psi,\Phi}\in X_\Phi$, hence $(w,x)\in d\Omega_{\Psi,\Phi}$.

Also it is clear that $\textrm{Im}(J)=\ker (Q)$ and that $\|w\|_{\Ran(\Omega_{\Psi, \Phi})}$ satisfies the
required equality.
\end{proof}

\begin{corollary} \cite{cabenon}
The spaces $\Dom(\Omega_{\Psi,\Phi})$ and $\Ran(\Omega_{\Psi,\Phi})$, endowed with their
respective quasi-norms, are complete.
\end{corollary}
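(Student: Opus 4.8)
The plan is to deduce completeness of both spaces from the completeness of the derived space $d\Omega_{\Psi,\Phi}$, using that, by Proposition \ref{dom-ran}, $\Dom(\Omega_{\Psi,\Phi})$ embeds isometrically as a \emph{closed} subspace of $d\Omega_{\Psi,\Phi}$ via $J$, while $\Ran(\Omega_{\Psi,\Phi})$ is, via $Q$, the Hausdorff quotient of $d\Omega_{\Psi,\Phi}$ by that same subspace.

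First I would check that $d\Omega_{\Psi,\Phi}$ is a quasi-Banach space. By Proposition \ref{isomorphism} it is isomorphic to $X_{\Psi,\Phi}=(\Psi,\Phi)(\mathcal H)$ endowed with the quotient norm coming from the bounded operator $(\Psi,\Phi):\mathcal H\to\Sigma\times\Sigma$. Since the interpolators $\Psi$ and $\Phi$ are continuous, $\ker(\Psi,\Phi)$ is a closed subspace of the Banach space $\mathcal H$, so $X_{\Psi,\Phi}$ is isometric to $\mathcal H/\ker(\Psi,\Phi)$, hence a Banach space and in particular complete. Therefore $d\Omega_{\Psi,\Phi}$ is complete.

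Now, for $\Dom(\Omega_{\Psi,\Phi})$, recall from the proof of Proposition \ref{dom-ran} that $Jx=(0,x)$ is an isometry of $(\Dom(\Omega_{\Psi,\Phi}),\|\cdot\|_{\Dom})$ onto $\mathrm{Im}(J)$ and that $\mathrm{Im}(J)$ is closed in $d\Omega_{\Psi,\Phi}$. A closed subspace of a complete quasi-normed space is complete, so pulling back along the isometry $J$ shows $\Dom(\Omega_{\Psi,\Phi})$ is complete. For $\Ran(\Omega_{\Psi,\Phi})$, the map $Q(w,x)=w$ of Proposition \ref{dom-ran} realizes it as $d\Omega_{\Psi,\Phi}/\mathrm{Im}(J)$, and the formula $\|w\|_{\Ran}=\inf\{\|(w,x)\|_{\Omega_{\Psi,\Phi}}:(w,x)\in d\Omega_{\Psi,\Phi}\}$ identifies its quasi-norm with the quotient quasi-norm; since a Hausdorff quotient of a complete quasi-normed space by a closed subspace is again complete, $\Ran(\Omega_{\Psi,\Phi})$ is complete.

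The only delicate point — and it is a mild one — is that we are working with quasi-norms rather than norms, so one should invoke (e.g.\ via the Aoki--Rolewicz theorem, replacing each quasi-norm by an equivalent $p$-norm) the standard facts that closed subspaces and Hausdorff quotients of quasi-Banach spaces remain quasi-Banach, and use the closedness of $\mathrm{Im}(J)$ to ensure that the quotient quasi-norm on $\Ran(\Omega_{\Psi,\Phi})$ separates points. Everything else is a direct transfer of completeness along the isometry $J$ and the quotient map $Q$.
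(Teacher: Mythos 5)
Your proof is correct and follows the same route the paper has in mind: Proposition \ref{dom-ran} exhibits $\Dom(\Omega_{\Psi,\Phi})$ as the closed isometric image of $J$ in $d\Omega_{\Psi,\Phi}$ and $\Ran(\Omega_{\Psi,\Phi})$ as the quotient $d\Omega_{\Psi,\Phi}/\operatorname{Im}(J)$ with the quotient quasi-norm, while Proposition \ref{isomorphism} identifies $d\Omega_{\Psi,\Phi}$ with $X_{\Psi,\Phi}\cong\mathcal H/\ker(\Psi,\Phi)$ and hence shows it is complete. The corollary is stated in the paper with a citation to \cite{cabenon} and no written proof, but your argument is exactly the one the surrounding propositions are designed to support, including the attention to the quasi-norm caveats.
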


The exact sequences (\ref{exact-seq-bis}) and (\ref{eq:seq-DomRan}) provide two different
representations of the derived space $d\Omega_{\Psi,\Phi}$ as a twisted sum.
See Examples \ref{ex:1} and \ref{ex:2}. The following result shows that the relation between these two exact sequences is symmetric.

\begin{prop}\label{prop:dom-ran}
Let $(\Psi,\Phi)$ be a pair of interpolators on $\mathcal H$. Then
\begin{enumerate}
  \item $\Dom(\Omega_{\Psi,\Phi}) =\Phi(\ker\Psi)$.
  \item $\Ran(\Omega_{\Psi,\Phi}) = X_\Psi$.
  \item The derivation associated to the exact sequence (\ref{eq:seq-DomRan}) is $\Omega_{\Phi,\Psi}$.
\end{enumerate}
\end{prop}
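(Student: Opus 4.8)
The plan is to prove (1) and (2) by direct manipulation of the definitions, using only $\Phi B_\Phi=\mathrm{id}_{X_\Phi}$, $\Psi B_\Psi=\mathrm{id}_{X_\Psi}$ and the descriptions of $\ker\Phi$, $\ker\Psi$; and then to deduce (3) by exhibiting the interchange of coordinates as an equivalence between the sequence (\ref{eq:seq-DomRan}) and the sequence canonically generated by $\Omega_{\Phi,\Psi}$.

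For (1): the condition $x\in\Dom(\Omega_{\Psi,\Phi})$, i.e. $\Psi B_\Phi x=\Psi g$ for some $g\in\ker\Phi$, is equivalent to $B_\Phi x-g\in\ker\Psi$ for such a $g$; since $\Phi(B_\Phi x-g)=x$, this says $x\in\Phi(\ker\Psi)$. Conversely, if $x=\Phi h$ with $h\in\ker\Psi$, then $B_\Phi x-h\in\ker\Phi$ and $\Psi(B_\Phi x-h)=\Psi B_\Phi x=\Omega_{\Psi,\Phi}x$, so $\Omega_{\Psi,\Phi}x\in\Psi(\ker\Phi)$. For (2): $w\in\Ran(\Omega_{\Psi,\Phi})$ means $w-\Psi B_\Phi x=\Psi g$ for some $x\in X_\Phi$ and $g\in\ker\Phi$, hence $w=\Psi(B_\Phi x+g)\in\Psi(\mathcal H)=X_\Psi$; conversely, for $w=\Psi f$ take $x=\Phi f$ and note $w-\Omega_{\Psi,\Phi}x=\Psi(f-B_\Phi\Phi f)$ with $f-B_\Phi\Phi f\in\ker\Phi$. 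The same computations, with the bound on $B_\Phi$, show that the quasi-norm of $\Dom(\Omega_{\Psi,\Phi})$, resp. $\Ran(\Omega_{\Psi,\Phi})$, is equivalent to a quotient quasi-norm on $\Phi(\ker\Psi)$, resp. to $\|\cdot\|_\Psi$, so the identifications are isomorphic; these estimates are routine.

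For (3), fix a homogeneous map $B_\Psi:X_\Psi\to\mathcal H$ with $\Psi B_\Psi=\mathrm{id}_{X_\Psi}$ and $\|B_\Psi(\cdot)\|_{\mathcal H}\le(1+\varepsilon)\|\cdot\|_\Psi$, so that $\Omega_{\Phi,\Psi}=\Phi B_\Psi$ generates the exact sequence $0\to\Phi(\ker\Psi)\to d\Omega_{\Phi,\Psi}\to X_\Psi\to 0$ with inclusion $v\mapsto(v,0)$ and quotient map $(v,u)\mapsto u$. By (1) and (2), the sequence (\ref{eq:seq-DomRan}) is $0\to\Phi(\ker\Psi)\to d\Omega_{\Psi,\Phi}\to X_\Psi\to 0$ with $x\mapsto(0,x)$ and $(w,y)\mapsto w$. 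I would introduce the flip $\tau(w,x)=(x,w)$ and show it is an equivalence between these two sequences. The key point is that $\tau$ maps $d\Omega_{\Psi,\Phi}$ into $d\Omega_{\Phi,\Psi}$: given $(w,x)\in d\Omega_{\Psi,\Phi}$, the argument in the proof of Proposition \ref{isomorphism} produces $f\in\mathcal H$ with $(w,x)=(\Psi f,\Phi f)$, and then $\Phi f-\Omega_{\Phi,\Psi}(\Psi f)=\Phi(f-B_\Psi\Psi f)$ with $f-B_\Psi\Psi f\in\ker\Psi$ (since $\Psi B_\Psi=\mathrm{id}_{X_\Psi}$), so $(x,w)=(\Phi f,\Psi f)\in d\Omega_{\Phi,\Psi}$. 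Now $\tau$ is linear, the symmetric argument shows $(v,u)\mapsto(u,v)$ carries $d\Omega_{\Phi,\Psi}$ into $d\Omega_{\Psi,\Phi}$ so that $\tau$ is bijective, and $\tau$ and $\tau^{-1}$ are bounded by Proposition \ref{isomorphism} together with the norm bounds on $B_\Psi$ and $B_\Phi$. Finally $\tau$ makes the evident diagram commute ($\tau(0,x)=(x,0)$ and the second quotient map sends $(x,w)$ to $w$), so the two sequences are equivalent and, by definition, $\Omega_{\Phi,\Psi}$ is the derivation associated to (\ref{eq:seq-DomRan}).

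I expect the only nontrivial point to be the well-definedness of $\tau$ on $d\Omega_{\Psi,\Phi}$ with values in $d\Omega_{\Phi,\Psi}$: this requires reconstructing a lift $f\in\mathcal H$ of $(w,x)$, reusing the computation of Proposition \ref{isomorphism}, and then using $f-B_\Psi\Psi f\in\ker\Psi$; the rest is bookkeeping and elementary quasi-norm estimates. One should also bear in mind, as in Remark \ref{rem:qnorm}, that $\Omega_{\Phi,\Psi}$ and the quasi-norm of $d\Omega_{\Phi,\Psi}$ depend on the choice of $B_\Psi$, so statement (3) is to be read up to this harmless ambiguity, different choices producing equivalent exact sequences.
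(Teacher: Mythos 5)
Your proof is correct and follows essentially the same route as the paper: parts (1) and (2) are the same direct manipulations with $B_\Phi$ and the kernels, and in part (3) the key computation (lift $(w,x)\in d\Omega_{\Psi,\Phi}$ to $f\in\mathcal H$ and observe $\Phi f-\Omega_{\Phi,\Psi}\Psi f=\Phi(f-B_\Psi\Psi f)\in\Phi(\ker\Psi)$) is exactly the one the paper uses to show $X_{\Phi,\Psi}=d\Omega_{\Phi,\Psi}$. The only difference is presentational: you make the coordinate-flip $\tau$ explicit and verify the commutative diagram, whereas the paper leaves the flip implicit behind the identifications $d\Omega_{\Psi,\Phi}\cong X_{\Psi,\Phi}$ and $X_{\Phi,\Psi}\cong d\Omega_{\Phi,\Psi}$.
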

\begin{proof}
(1) If $x\in \Dom(\Omega_{\Psi,\Phi})$ then $x\in X_\Phi$ and $\Psi B_\Phi x\in \Psi(\ker \Phi)$.
Thus $\Psi B_\Phi x=\Psi g$ for some $g\in \ker \Phi$, hence $B_\Phi x- g\in \ker \Psi$ and
$x=\Phi(B_\Phi x- g)\in \Phi(\ker \Psi)$.
Conversely, if $y \in \Phi(\ker \Psi)$ then $y \in X_{\Phi}$ and there is $f \in \ker \Psi$ such that $y = \Phi(f)$. We have $B_{\Phi}(y) - f \in \ker \Phi$, so $\Psi(B_{\Phi}(y) - f) = \Omega_{\Psi, \Phi}(y) \in \Psi(\ker\Phi)$. Hence $y \in \Dom(\Omega_{\Psi, \Phi})$.

(2) If $w\in\Ran(\Omega_{\Psi,\Phi})$ then there exists $x\in X_\Phi$ such that
$w-\Omega_{\Psi,\Phi}x\in \Psi(\ker\Phi)\subset X_\Psi$.
Since $\Omega_{\Psi,\Phi}x\in X_\Psi$, we get $w\in X_\Psi$.
Conversely, if $w\in X_\Psi$ then $w=\Psi f$ for some $f\in\mathcal{H}$.
Since $(\Psi f,\Phi f)\in X_{\Psi,\Phi}$, Proposition \ref{dom-ran} implies
$w=\Psi f\in \Ran(\Omega_{\Psi,\Phi})$.

(3) We have to show that
$X_{\Phi,\Psi}=\{(w,x)\in\Sigma\times X_\Psi : w-\Omega_{\Phi,\Psi}x\in\Phi(\ker\Psi)\}$.
If $f\in\mathcal{H}$, then $\Psi f\in X_\Psi$ and
$\Phi f- \Omega_{\Phi, \Psi}\Psi f =\Phi(f-B_\Psi \Psi f)\in \Phi(\ker\Psi)$ because
$f-B_\Psi \Psi f\in \ker\Psi$.

Conversely, if $x\in X_\Psi$ and $w-\Omega_{\Phi,\Psi}x\in\Phi(\ker\Psi)$ then $x=\Psi h$
and $w-\Phi B_\Psi x=\Phi g$ with $h\in\mathcal{H}$ and $g\in \ker\Psi$.
Thus $\Phi(g+B_\Psi x)=w$ and $\Psi(g+B_\Psi x)=\Psi B_\Psi x =x $, hence
$(w,x)\in X_{\Phi, \Psi}$.
\end{proof}

Thus, the general situation can be described by the diagram

$$
\xymatrix{& \Ran(\Omega_{\Psi, \Phi})\ar@/^2pc/[rd]^{\Omega_{\Phi, \Psi}} &\\
 \Dom(\Omega_{\Phi, \Psi})\ar[r]\ar@/^1.3pc/[dr]_{\Omega_{\Phi,\Psi}}& X_{\Psi, \Phi}\ar[r]\ar[u]
 &\Ran(\Omega_{\Phi, \Psi})\ar@/^1.3pc/[ul]_{\Omega_{\Psi,\Phi}}\\
 &\Dom(\Omega_{\Psi, \Phi})\ar@/^2pc/[lu]^{\Omega_{\Psi,\Phi}}\ar[u]}
$$

In the language of \cite{cabenon}, if $\Omega=\Omega_{\Psi,\Phi}$ then $\mho=\Omega_{\Phi,\Psi}$.
The claim ``the roles of $\mho$ and $\Omega$ are perfectly symmetric" in \cite[p. 48]{cabenon} refers to the Kalton-Peck case, the compatible situation described in Section \ref{compatible} in which $\Dom(\Omega_{\Phi,\Psi})= X_\Phi= \Ran(\Omega_{\Phi,\Psi})$. In this case the preceding diagram becomes:
$$
\xymatrix{& (\ell_f)^*\ar@/^2pc/[rd]^{\Omega_{\Phi, \Psi}} &\\
 \ell_2\ar[r]\ar@/^1.3pc/[dr]_{\Omega_{\Phi,\Psi}}& Z_2\ar[r]\ar[u]
 &\ell_2\ar@/^1.3pc/[ul]_{\Omega_{\Psi,\Phi}}\\
 &\ell_f \ar@/^2pc/[lu]^{\Omega_{\Psi,\Phi}}\ar[u]}
$$

\section{The bounded splitting theorem}

We study now the splitting and bounded splitting of the induced sequences. Recall that he exact sequence
\begin{equation}
\begin{CD}
0 @>>> \Psi(\ker\Phi) @>>> d\Omega_{\Psi,\Phi}  @>>> X_\Phi @>>>0
\end{CD}
\end{equation}
generated by the differential
$\Omega_{\Psi, \Phi}$ boundedly split if $\Omega_{\Psi, \Phi}: X_\Phi\to  \Psi(\ker\Phi)$ is bounded. The sequence splits if there is a linear map $L: X_\Phi\to \Sigma$ such that $\Omega_{\Psi, \Phi} - L: X_\Phi\to  \Psi(\ker\Phi)$ is bounded. The following result extends and completes \cite[Theorem 3.16]{ccfg}.

\begin{theorem}\label{main}
For a pair $(\Psi, \Phi)$ of interpolators, the following conditions are equivalent:
\begin{enumerate}
\item $\mathcal H = \ker\Phi+ \ker\Psi$.
\item $X_\Phi = \Phi(\ker \Psi)$.
\item $X_\Psi = \Psi(\ker \Phi)$.
\item $\Dom(\Omega_{\Psi,\Phi})= X_\Phi$.
\item $\Dom(\Omega_{\Phi, \Psi})= X_\Psi$.
\end{enumerate}
The above conditions are also equivalent to their ``topological" counterparts:
\begin{enumerate}
\item[(1')] $\mathcal H =\ker\Phi +\ker\Psi$ and there exists $C>0$ such that for every $f\in\mathcal H$
we can find $g\in\ker\Psi$ and $h\in\ker\Phi$ with $f=g+h$, $\|g\|\leq C\|f\|$ and $\|h\|\leq C\|f\|$.
\item[(2')] $X_\Phi = \Phi(\ker \Psi)$ with equivalent norms.
\item[(3')] $X_\Psi = \Psi(\ker \Phi)$ with equivalent norms.
\item[(4')] $\Omega_{\Psi, \Phi}$ is bounded from $X_\Phi$ to $\Psi(\ker\Phi)$.
\item[(5')] $\Omega_{\Phi, \Psi}$ is bounded from $X_\Psi$ to $\Phi(\ker\Psi)$.
\end{enumerate}
\end{theorem}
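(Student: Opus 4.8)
The plan is to prove the equivalence of the five ``algebraic'' conditions (1)--(5) first, and then to upgrade each one to its topological counterpart by an open mapping argument, which together give the equivalence of all ten conditions. For the first part: by Proposition \ref{prop:dom-ran}(1) we have $\Dom(\Omega_{\Psi,\Phi})=\Phi(\ker\Psi)$ as sets, so (2) $\Leftrightarrow$ (4) is immediate, and applying the same proposition to the pair $(\Phi,\Psi)$ gives $\Dom(\Omega_{\Phi,\Psi})=\Psi(\ker\Phi)$, whence (3) $\Leftrightarrow$ (5). To link (1) with (2): if $\mathcal H=\ker\Phi+\ker\Psi$ and $x=\Phi f\in X_\Phi$, decompose $f=h+g$ with $h\in\ker\Phi$ and $g\in\ker\Psi$, so that $x=\Phi g\in\Phi(\ker\Psi)$, while $\Phi(\ker\Psi)\subseteq X_\Phi$ always; conversely, if $X_\Phi=\Phi(\ker\Psi)$ and $f\in\mathcal H$, then $\Phi f=\Phi g$ for some $g\in\ker\Psi$, so $f=(f-g)+g\in\ker\Phi+\ker\Psi$. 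Since condition (1) is symmetric under interchanging $\Psi$ and $\Phi$, the same argument with the roles reversed gives (1) $\Leftrightarrow$ (3), so (1)--(5) are all equivalent.

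\emph{Passing from $(k)$ to $(k')$.} Each primed condition trivially implies the corresponding unprimed one, so it suffices to prove $(k)\Rightarrow(k')$ for $k=1,\dots,5$; combined with the above this gives the equivalence of all ten conditions. The mechanism is the same in every case: one exhibits a complete (quasi-)normed space together with a canonical continuous linear map onto the target which is contractive because one of the two relevant norm inequalities is automatic, and as soon as $(k)$ makes this map bijective, the open mapping theorem supplies the reverse inequality, which is precisely $(k')$. For $k=1$ one takes the bounded addition map $\ker\Phi\oplus\ker\Psi\to\mathcal H$, $(h,g)\mapsto h+g$, between Banach spaces ($\ker\Phi$, $\ker\Psi$ closed in $\mathcal H$): it is onto by (1), hence open, which yields a constant $C$ and decompositions $f=h+g$ with $\|h\|,\|g\|\le C\|f\|$, i.e.\ (1'). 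For $k=2$ one takes the identity from $\Phi(\ker\Psi)$, equipped with the quotient norm $\|\cdot\|'$ of the restriction $\Phi|_{\ker\Psi}$ (a Banach space, being the quotient of the Banach space $\ker\Psi$ by the closed subspace $\ker\Phi\cap\ker\Psi$), into $X_\Phi$: it is contractive since $\|x\|_\Phi\le\|x\|'$ and onto by (2), so the open mapping theorem gives $\|x\|'\le C\|x\|_\Phi$, i.e.\ (2'). The case $k=3$ is symmetric, with $\Psi(\ker\Phi)$ and $X_\Psi$ in place of $\Phi(\ker\Psi)$ and $X_\Phi$. For $k=4$ one takes the identity $\Dom(\Omega_{\Psi,\Phi})\to X_\Phi$, contractive since $\|x\|_\Phi\le\|\Omega_{\Psi,\Phi}x\|_q+\|x\|_\Phi=\|x\|_{\Dom(\Omega_{\Psi,\Phi})}$ and onto by (4), so the open mapping theorem gives $\|\Omega_{\Psi,\Phi}x\|_q\le C\|x\|_\Phi$, i.e.\ (4'); here one uses the open mapping theorem for complete metrizable topological vector spaces, since $\Dom(\Omega_{\Psi,\Phi})$ is only quasi-normed (its quasi-triangle inequality is Remark \ref{rem:qnorm} and its completeness was established above). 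The case $k=5$ is symmetric.

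\emph{Where care is needed.} There is no deep obstacle; the argument is an organised application of the open mapping theorem, and the points not to overlook are essentially bookkeeping: that every space to which the theorem is applied is genuinely complete --- for the domain spaces this is the completeness recorded above, and for the images $\Phi(\ker\Psi)$, $\Psi(\ker\Phi)$ one must use that $\ker\Phi\cap\ker\Psi$ is closed and that a Banach space modulo a closed subspace is again Banach; that each canonical map is oriented so that one norm estimate holds for free and only the other has to be produced by the theorem; and that for the quasi-normed derived and domain spaces one invokes the open mapping theorem for $F$-spaces rather than its Banach version. Finally, should one wish to regard (2') and (4') (respectively (3') and (5')) as statements about literally the same normed space, a short computation with $B_\Phi$ shows that on $\Phi(\ker\Psi)$ the norms $\|\cdot\|'$ and $\|\cdot\|_{\Dom(\Omega_{\Psi,\Phi})}$ are equivalent, so this is not a further difficulty.
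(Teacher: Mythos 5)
Your proposal is correct and rests on the same two pillars as the paper's proof: Proposition~\ref{prop:dom-ran} to identify $\Dom(\Omega_{\Psi,\Phi})=\Phi(\ker\Psi)$ and $\Dom(\Omega_{\Phi,\Psi})=\Psi(\ker\Phi)$, and the open mapping theorem to upgrade the set-theoretic equalities to norm equivalences. The small differences are organizational and worth noting. You pair $(2)\Leftrightarrow(4)$ and $(3)\Leftrightarrow(5)$, which is the most direct reading of Proposition~\ref{prop:dom-ran}(1); the paper instead pairs $(3)\Leftrightarrow(4)$ and $(2)\Leftrightarrow(5)$, using that $(3)\Rightarrow(4)$ is immediate from $\Omega_{\Psi,\Phi}(X_\Phi)\subseteq X_\Psi$ and then returning to Proposition~\ref{prop:dom-ran} for the converse. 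For the primed conditions, you apply the open mapping theorem uniformly to a suitable bijection for each of $k=1,\dots,5$, which for $k=4,5$ requires invoking it for complete quasi-normed (F-)spaces as you correctly flag; the paper sidesteps this by proving $(1)\Rightarrow(1')$, $(2)\Rightarrow(2')$, $(3)\Rightarrow(3')$ via the open mapping theorem on Banach quotients only, and then deriving $(3')\Rightarrow(4')$ from the trivial estimate $\|\Omega_{\Psi,\Phi}f\|_\Psi=\|\Psi B_\Phi f\|_\Psi\le\|\Psi\|(1+\varepsilon)\|f\|_\Phi$, so that no F-space open mapping theorem is needed. Both routes are sound; yours is more systematic, the paper's is a bit more economical in the machinery it invokes.
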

\begin{proof}
(1) $\Leftrightarrow$ (2) because $\Phi(\mathcal H)= \Phi(\ker \Psi)$ if and only if
$\mathcal H = \ker\Phi+ \ker\Psi$.
Similarly (1) $\Leftrightarrow$ (3).

Clearly (3) $\Rightarrow$ (4) and, by Proposition \ref{prop:dom-ran}, (4) $\Rightarrow$ (3).
Similarly (2) $\Leftrightarrow$ (5).
\medskip

(1) $\Rightarrow$ (1') Since $\mathcal H = \ker\Phi+ \ker\Psi$, the map $\Psi:\ker\Phi\to X_\Psi$ is open.
Thus there exists $c>0$ such that, for every $f\in \mathcal H$, we can find $g\in \ker\Phi$ with
$\|g\|\leq c\|f\|$ and $\Psi f=\Psi g$.
Then $f-g \in \ker\Psi$, $\|f-g\|\leq (1+c)\|f\|$ and $f =g + (f-g)$.

(2) $\Rightarrow$ (2') follows from $\dist(f, \ker\Phi)\leq \dist(f, \ker\Phi\cap\ker\Psi)$
and the open mapping theorem, and the proof of (3) $\Rightarrow$ (3') is similar.

(3') $\Rightarrow$ (4') is a consequence of
$\|\Omega_{\Psi, \Phi}f\|_\Psi=\|\Psi B_\Phi f\|_\Psi \leq \|\Psi\|(1+\varepsilon)\|f\|_\Phi$,
(4') $\Rightarrow$ (3') follows from Proposition \ref{prop:dom-ran}, and the proof of
(2') $\Leftrightarrow$ (5') is similar.
\end{proof}

In particular, Theorem \ref{main} shows that the sequence
\begin{equation}
\begin{CD}
0 @>>> \Psi(\ker\Phi) @>>> d\Omega_{\Psi,\Phi}  @>>> X_\Phi @>>>0
\end{CD}
\end{equation}
boundedly splits precisely when $\mathcal H=\ker\Phi+ \ker\Psi$, which also happens if an only if
\begin{equation}
\begin{CD}
0 @>>> \Phi(\ker\Psi) @>>> d\Omega_{\Psi,\Phi}  @>>> X_\Psi @>>>0
\end{CD}
\end{equation}
boundedly splits. Thus, it also shows the symmetric role of the interpolators:
\begin{corollary}
$\Omega_{\Psi,\Phi}$ is bounded if and only if so is $\Omega_{\Phi,\Psi}$.
\end{corollary}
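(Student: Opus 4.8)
\section*{Proof proposal for the corollary}

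The plan is to read this off directly from Theorem~\ref{main}. The preliminary step is to fix the meaning of ``$\Omega_{\Psi,\Phi}$ is bounded''. Since $\Omega_{\Psi,\Phi}=\Psi B_\Phi$ and $B_\Phi x\in\mathcal H$ for every $x\in X_\Phi$, the derivation always takes values in $\Psi(\mathcal H)=X_\Psi$, and $\Omega_{\Psi,\Phi}x$ lies in the subspace $\Psi(\ker\Phi)$ precisely when $x\in\Dom(\Omega_{\Psi,\Phi})$. In accordance with the notion of bounded splitting recalled before the statement, ``$\Omega_{\Psi,\Phi}$ is bounded'' means: $\Omega_{\Psi,\Phi}$ carries $X_\Phi$ into $\Psi(\ker\Phi)$ and $\sup\{\|\Omega_{\Psi,\Phi}x\|_q:\|x\|_\Phi\le 1\}<\infty$; this is exactly condition~(4') of Theorem~\ref{main}, and likewise ``$\Omega_{\Phi,\Psi}$ is bounded'' is condition~(5').

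The remaining step is then merely to invoke Theorem~\ref{main}, which asserts that (1)--(5) together with their topological versions (1')--(5') are all equivalent; in particular (4') $\Leftrightarrow$ (5'), which is the assertion of the corollary. If one prefers to trace the mechanism rather than cite the theorem as a black box, the structural reason is Proposition~\ref{prop:dom-ran}: it identifies $\Dom(\Omega_{\Psi,\Phi})$ with $\Phi(\ker\Psi)$ and $\Ran(\Omega_{\Psi,\Phi})$ with $X_\Psi$, and recognizes $\Omega_{\Phi,\Psi}$ as the derivation associated to the exact sequence (\ref{eq:seq-DomRan}). Thus boundedness of $\Omega_{\Psi,\Phi}$ forces $\Dom(\Omega_{\Psi,\Phi})=X_\Phi$, equivalently $\Phi(\ker\Psi)=X_\Phi$, equivalently $\mathcal H=\ker\Phi+\ker\Psi$, equivalently $\Psi(\ker\Phi)=X_\Psi=\Ran(\Omega_{\Psi,\Phi})$; and the open-mapping refinements used in the proof of Theorem~\ref{main} (those passing from the set-theoretic conditions to their topological counterparts) then upgrade this to the boundedness of $\Omega_{\Phi,\Psi}$ from $X_\Psi$ to $\Phi(\ker\Psi)$.

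In short, there is no genuine obstacle beyond Theorem~\ref{main} itself; the only point deserving a word of care is the interpretation of ``bounded'' fixed in the first paragraph, which is precisely the one made in the definition of bounded splitting adopted at the beginning of this section.
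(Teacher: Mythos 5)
Your proof is correct and takes essentially the same route as the paper: the corollary is read off from the equivalence (4') $\Leftrightarrow$ (5') of Theorem~\ref{main}, after observing that ``$\Omega_{\Psi,\Phi}$ is bounded'' is exactly condition (4') and ``$\Omega_{\Phi,\Psi}$ is bounded'' is condition (5'). Your additional tracing through Proposition~\ref{prop:dom-ran} is just an unpacking of the same argument, so nothing new is being used.
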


\begin{prob} Is it true that $\Omega_{\Psi,\Phi}$ is trivial if and only if so is $\Omega_{\Phi,\Psi}$?
\end{prob}

\section{Examples}

Several relevant examples in the literature admit a formulation in the schema of pairs we have just presented. They include
Cwikel, Kalton, Milman, Rochberg differential methods \cite{ckmr}, the compatible and almost compatible interpolators of Carro, Cerd\`a and Soria \cite{caceso}, see Section \ref{compatible}, the translation operators considered by Cwikel, Jawerth, Milman and Rochberg \cite{cjmr} and, of course, the complex and real methods.

\subsection{Differential methods of Cwikel, Kalton, Milman, Rochberg}\label{ex:ckmr}
The so-called differential methods of Cwikel, Kalton, Milman and Rochberg \cite{ckmr} correspond to our schema
of two interpolators $(\Psi, \Phi)$, and this is the content of \cite[Section 5]{ckmr}.
With the same notation used there (see \cite{ckmr} for precise definitions): $\overline B = (X_0, X_1)$, $\textbf X = (\mathscr X_0, \mathscr X_1)$
is a couple of (Laurent compatible) pseudolattices, and
$$
\mathscr J(\textbf X, \overline B) = \{ (b_n)_{n\in \mathbb Z}: b_n\in X_0\cap X_1,
(e^{jn}b_n)_{n\in \mathbb Z}\in \mathscr X_j(B_j), j=0,1\}
$$
is endowed with the norm $\|(b_n)\| = \max_{j=0,1} \|(e^{jn}b_n)\|_{ X_j(B_j)}$.
The space of parameters $D$ is the open annulus $\mathbb A = \{z\in \mathbb C: 1<|z|<e\}$. The Laurent compatibility allows
one to identify the elements of $\mathscr J(\textbf X, \overline B)$ with certain analytic functions $f: \mathbb A \to X_0+X_1$.
For $s \in \mathbb A$, two interpolators $\Phi_s: \mathscr J(\textbf X, \overline B)\rightarrow \Sigma$
and  $\Psi_s: \mathscr J(\textbf X, \overline B)\rightarrow \Sigma$ are given by
$$
\Phi_s((b_n) ) = \sum s^n b_n\quad \textrm{and}\quad  \Psi_s((b_n) ) = \sum ns^{n-1} b_n.
$$

It is carefully shown in \cite{ckmr} that these methods subsume most versions of the real and complex
interpolation methods \cite[Section 4]{ckmr}; and also the method of compatible and almost-compatible interpolators of Carro, Cerd\`{a} and Soria \cite{caceso} to which we will return later (see \cite[Section 5]{ckmr} (in particular, the differential condition \cite[Def. 3.4]{ckmr} is there to get almost-compatible interpolators, while an additional condition (the left-shift maps boundedly $\mathscr J(\textbf X, \overline B)$ into itself) is required to make the interpolator compatible).

\subsection{Translation operators}\label{trans}

Consider two evaluation interpolators $(\Phi_\theta, \Phi_\nu)$
associated to a differential interpolation method as above. The associated differential $\Phi_\theta B_{\Phi_\nu}$ is the translation map $\mathscr R_{\theta, \nu}$ considered in either \cite{ckmr} or \cite{cjmr}. In this case the symmetric differential is
obviously $\mathscr R_{\nu, \theta}$.  Observe that $\mathscr R_{\nu, \theta}: X_\theta \to X_\nu$ is clearly bounded so the induced exact sequence splits. Slightly less obvious is that also the natural sequence generated by $\mathscr R_{\nu, \theta}$
$$\begin{CD} 0@>>> \Phi_\nu(\ker \Phi_\theta)@>>> X_{\nu, \phi} @>>> X_\theta@>>> 0\end{CD}$$
splits: this is consequence of Theorem \ref{main} and:

\begin{lemma} $\mathcal H = \ker \Phi_\nu + \ker \Phi_\theta$.\end{lemma}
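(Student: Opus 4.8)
The plan is to produce the decomposition $f=g+h$ explicitly, by means of a scalar partition of unity adapted to the two evaluation points, rather than by any abstract lifting (which would be circular here, since conditions (1)--(5) of Theorem \ref{main} are all equivalent, so lifting an $x\in X_\theta$ into $\ker\Phi_\nu$ would already presuppose the statement). Using the Laurent compatibility, identify an element $f\in\mathcal H=\mathscr J(\textbf X,\overline B)$ with the analytic function $z\mapsto f(z)=\sum_n z^nb_n$ on $\mathbb A$, so that $\Phi_\theta f=f(\theta)$ and $\Phi_\nu f=f(\nu)$. Since $\theta\neq\nu$, the degree-one polynomials $u(z)=\frac{z-\nu}{\theta-\nu}$ and $v(z)=\frac{z-\theta}{\nu-\theta}$ satisfy $u+v\equiv 1$, $u(\nu)=0$ and $v(\theta)=0$. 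Put $g=u\cdot f$ and $h=v\cdot f$ (pointwise multiplication by the scalar polynomial). Then $f=g+h$, while $\Phi_\nu g=g(\nu)=0$ and $\Phi_\theta h=h(\theta)=0$; that is, $g\in\ker\Phi_\nu$, $h\in\ker\Phi_\theta$, and the lemma follows — \emph{provided} $g,h\in\mathcal H$.

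Since multiplication by a constant is harmless, the whole content reduces to checking that $\mathcal H$ is invariant under multiplication by $z$. On the sequence side this is the bilateral shift, $z\cdot f\leftrightarrow(b_{n-1})_n$, and in the norm of $\mathscr J$ one has $(e^{jn}b_{n-1})_n=e^{j}\,S\big((e^{jm}b_m)_m\big)$ for $j=0,1$, where $S$ is the index shift; hence $\|z\cdot f\|_{\mathscr J}\le e\,\max_{j=0,1}\|S\|_{\mathscr X_j(B_j)}\,\|f\|_{\mathscr J}$. I would therefore first record, from the definitions in \cite[Section 5]{ckmr}, that the shift acts boundedly on each Laurent-compatible pseudolattice $\mathscr X_j(B_j)$; granted this, $f\mapsto z\cdot f$, and with it $f\mapsto u\cdot f$ and $f\mapsto v\cdot f$, map $\mathcal H$ into itself, which finishes the proof. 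Note that no norm control on $g$ and $h$ is needed: Theorem \ref{main} requires only the algebraic identity $\mathcal H=\ker\Phi_\nu+\ker\Phi_\theta$ in order to conclude that the sequence generated by $\mathscr R_{\nu,\theta}$ splits.

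The step to be careful about is exactly this invariance under multiplication by $z$. It is weaker than the compatibility condition recalled in \ref{ex:ckmr} (the left shift mapping $\mathscr J$ boundedly into itself with the estimates used there), since here one only needs the shift to be bounded on $\mathscr X_0$ and on $\mathscr X_1$ separately; nevertheless I would spell this out carefully from the pseudolattice axioms and the Laurent-compatibility hypothesis of \cite{ckmr}. As an alternative, or a sanity check, one can argue by hand: when $f$ has only finitely many nonzero coordinates the required correction can be placed at two consecutive indices (solving a $2\times2$ linear system over $X_0\cap X_1$), and one then passes to the general case; but the multiplier argument above is cleaner and sidesteps any density assumption on $\mathscr J$.
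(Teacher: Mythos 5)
Your proof is correct and takes essentially the same route as the paper: both decompose $f$ as $f=\alpha f+(1-\alpha)f$ for a scalar analytic multiplier $\alpha$ on the annulus with $\alpha(\nu)=0$ and $\alpha(\theta)=1$ (the paper takes $\alpha=\varphi/\varphi(\theta)$ for a ``conformal map'' $\varphi$ with $\varphi(\nu)=0$, you take the affine polynomial $\alpha(z)=(z-\nu)/(\theta-\nu)$). Your version is in fact a bit cleaner: the polynomial choice only requires invariance of $\mathscr J$ under multiplication by $z$, which you correctly reduce to boundedness of the coefficient shift on each $\mathscr X_j(B_j)$, and this spells out the multiplier step that the paper's one-line proof leaves implicit.
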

\begin{proof} Let $\varphi$ be a conformal map from the annulus to the disk so that $\varphi(\nu)=0$ and let $f\in \mathcal H$. Then $f = \frac{\varphi}{\varphi(\theta)}f + f - \frac{\varphi}{\varphi(\theta)}f$.
\end{proof}

\begin{corollary}\label{equiva} $\Phi_\nu(\ker \Phi_\theta) = X_\nu$ with equivalence of norms.
\end{corollary}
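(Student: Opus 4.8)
The plan is to read the corollary directly off Theorem \ref{main} applied to the pair $(\Psi,\Phi)=(\Phi_\nu,\Phi_\theta)$. The only point needing attention is the bookkeeping, because conditions (2) and (3) of that theorem are not symmetric in the two interpolators: in order to land on the space $\Phi_\nu(\ker\Phi_\theta)$ one must place $\Phi_\nu$ in the slot of $\Psi$ and $\Phi_\theta$ in the slot of $\Phi$. With that identification, condition (1) of Theorem \ref{main} becomes the assertion $\mathcal H=\ker\Phi_\theta+\ker\Phi_\nu$, which is precisely what the preceding Lemma supplies (via the conformal map $\varphi$ of the annulus onto the disc with $\varphi(\nu)=0$).

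Having verified hypothesis (1), I would then invoke the equivalence of (1) with the topological counterpart (3') in Theorem \ref{main}. In the present notation (3') reads $X_\Psi=\Psi(\ker\Phi)$ with equivalent norms, that is, $X_{\Phi_\nu}=\Phi_\nu(\ker\Phi_\theta)$ with equivalent norms. Since for differential methods $X_{\Phi_\nu}$ is exactly the space denoted $X_\nu$, this is verbatim the statement of the corollary, and nothing further is required.

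There is essentially no obstacle here: all the analytic content has already been absorbed into the Lemma, and Theorem \ref{main} upgrades the decomposition $\mathcal H=\ker\Phi_\nu+\ker\Phi_\theta$ to the isomorphism of spaces we want. Should one prefer an argument that does not route through Theorem \ref{main}, the mechanism would be the implication (3)$\Rightarrow$(3') from its proof: the inclusion $\ker\Phi_\theta\cap\ker\Phi_\nu\subseteq\ker\Phi_\nu$ gives the inequality $\dist(g,\ker\Phi_\nu)\le\dist(g,\ker\Phi_\theta\cap\ker\Phi_\nu)$, hence the easy norm comparison, while the reverse comparison follows by applying the open mapping theorem to the (by the Lemma, surjective) map $\Phi_\nu\colon\ker\Phi_\theta\to X_\nu$. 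Reusing Theorem \ref{main} is cleaner, so that is the route I would take.
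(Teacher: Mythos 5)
Your proof is correct and takes exactly the route the paper intends: the Lemma provides $\mathcal H=\ker\Phi_\theta+\ker\Phi_\nu$, which is condition (1) of Theorem \ref{main} for the pair $(\Psi,\Phi)=(\Phi_\nu,\Phi_\theta)$, and then the equivalence with (3') gives $X_{\Phi_\nu}=\Phi_\nu(\ker\Phi_\theta)$ with equivalent norms. The bookkeeping of which interpolator sits in which slot is handled correctly, and nothing is missing.
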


\subsection{The Kalton-Peck space}\label{ex:1}
We consider the couple $(\ell_1,\ell_\infty)$ and the compatible (see Section \ref{compatible})  pair of interpolators $\Psi=\delta_{1/2}'$
and $\Phi=\delta_{1/2}$ from complex interpolation.
Then $X_\Phi=\ell_2$, $\Omega_{\Psi,\Phi}(x) =2x\log |x|/\|x\|_2$ and $X_{\Psi,\Phi} = Z_2$, the
Kalton-Peck space. According to \cite{kaltpeck}, $\Dom(\Omega_{\Psi, \Phi})= \ell_f$ is the Orlicz sequence space
generated by $f(t)=t^2 \log^2 t$ and $\Ran(\Omega_{\Psi, \Phi})= \ell_f^*$, its Orlicz dual pace. Therefore the map $\Omega_{\Phi, \Psi}$ is related to Lambert's $W$ function (the inverse of $z \mapsto z e^{z}$), since $f^{-1}(t) = -\frac{\sqrt{t}}{W(\sqrt{t})}$ for $t$ small enough.

\subsection{Weigthed K\"othe spaces}\label{ex:2}
Fix a K\"othe function space $X$ with the Radon-Nikodym property, let $w_0$ and $w_1$ be weight functions,
and consider the interpolation couple $(X_0, X_1)$, where $X_j= X(w_j)$, $j =0,1$ with their natural norms.
In \cite[Proposition 4.1]{ccfg} we showed that $X_\theta = X(w_\theta)$ for $0<\theta <1$, where
$w_{\theta} = w_0^{1 - \theta}w_1^{\theta}$.
For $\Psi = \delta'_{\theta}$ and $\Phi = \delta_{\theta}$ we obtain
$\Omega_{\Psi, \Phi}f = \log \frac{w_1}{w_0}\cdot f$, a linear map.

Let us determine $\Dom(\Omega_{\Psi, \Phi})$ and $\Ran(\Omega_{\Psi, \Phi})$:
\smallskip

\noindent
\emph{Claim 1: $\Dom(\Omega_{\Psi, \Phi}) = X(w_\theta) \cap X(w_\theta \left|\log \frac{w_1}{w_0}\right|)$
with equivalence of norms.}

Indeed, $x \in \Dom(\Omega_{\Psi,\Phi})$ if and only if $x$ and $\Omega_{\Psi,\Phi}(x)=\log\frac{w_1}{w_0}x$
belongs to $X(w_{\theta})$.
\smallskip

\noindent
\emph{Claim 2: $\Ran(\Omega_{\Psi, \Phi})= X(w_\theta)+ X(w_\theta \left|\log \frac{w_1}{w_0}\right|^{-1})$
with equal norms.}

If $w \in\Ran(\Omega_{\Psi,\Phi})$, then we may write $w = w-\log\frac{w_1}{w_0}x +\log\frac{w_1}{w_0}x$
with $w - \log \frac{w_1}{w_0} x \in X(w_{\theta})$ and $x \in X(w_{\theta})$.
Then $\log \frac{w_1}{w_0} x \in X(w_\theta \left|\log \frac{w_1}{w_0}\right|^{-1})$, hence
$w \in X(w_\theta) + X(w_\theta \left|\log \frac{w_1}{w_0}\right|^{-1})$ and
\[
\|w\|_{X(w_\theta)+ X(w_\theta\left|\log \frac{w_1}{w_0}\right|^{-1})}\leq \|w\|_{\Ran(\Omega_{\Psi,\Phi})}.
\]

If $w \in X(w_\theta) + X(w_\theta \left|\log \frac{w_1}{w_0}\right|^{-1})$, then $w=y+z$ with
$y \in X(w_\theta)$ and $z \in X(w_\theta \left|\log \frac{w_1}{w_0}\right|^{-1})$.
Also, there is $x \in X(w_{\theta})$ such that $z = \log \frac{w_0}{w_1} x$ and
$\|z\|_{X(w_\theta \left|\log \frac{w_1}{w_0}\right|^{-1})} = \|x\|_{X(w_{\theta})}$.
So we get the other inclusion and the other norm estimate.
\smallskip

To finish the description of $\Dom(\Omega_{\Psi, \Phi})$ and $\Ran(\Omega_{\Psi, \Phi})$, let us denote
$\omega_\wedge = \min\{\omega_0, \omega_1\}$ and $\omega_\vee = \max\{w_0, w_1\}$.
\smallskip

\noindent
\emph{Claim 3: $X(\omega_0) \cap X(\omega_1) = X(\omega_\vee)$ with equivalence of norms.}

Let $x \in X(\omega_\vee)$. Then $\max\{\|\omega_0 x\|_X, \|\omega_1 x\|_X\} \leq \|=
\max\{\|x\|_{X(\omega_0)}, \|x\|_{X(\omega_1)}\} \leq \|x\|_{\omega_\vee}$.

If $x \in X(\omega_0) \cap X(\omega_1)$ and $A$ is the set where $w_0 \leq w_1$ then
$\omega_\vee x = w_1 x \chi_A + w_0 x (1 - \chi_A)$, so
$$
\|\omega_\vee x\|_X \leq \|w_1 x\|_X + \|w_0 x\|_X \leq 2 \|x\|_{X(\omega_0) \cap X(\omega_1)}
$$
and we get the reverse inclusion.
\smallskip

\noindent
\emph{Claim 4: $X(\omega_0) + X(\omega_1) = X(\omega_\wedge)$ with equivalence of norms}

Let $x = x_0 + x_1 \in X(\omega_0) + X(\omega_1)$ with $x_j \in X(\omega_j)$, $j = 0, 1$. We have
\begin{eqnarray*}
\|\omega_\wedge x\|_X & \leq & \|\omega_\wedge x_0\|_X + \|\omega_\wedge x_1\|_X
               \leq  \|\omega_0 x_0\|_X + \|\omega_1 x_1\|_X
                =  \|x_0\|_{X(\omega_0)} + \|x_1\|_{X(\omega_1)}.
\end{eqnarray*}
Since $x_0$ and $x_1$ are arbitrary, $\|x\|_{X(\omega_\wedge)}\leq \|x\|_{X(\omega_0)+ X(\omega_1)}$.

Now let $x \in X(\omega_\wedge)$, and let $A$ be as above. Let $x_0 = x \chi_A$ and $x_1 = x (1-\chi_A)$.
Then $x_j\in X(\omega_j)$, $j=0,1$, and
$\|x_0\|_{X(\omega_0)}+\|x_1\|_{X(\omega_1)}\leq 2\|x\|_{X(\omega_\wedge)}$,
so we obtain the reverse inclusion.\smallskip

In order to calculate $\Omega_{\Phi, \Psi}$, recall that $X_{\Psi} = \Ran(\Omega_{\Psi, \Phi})$.
If we let $\omega = \min\{1, \left|\log \frac{w_1}{w_0}\right|^{-1}\} w_{\theta}$ then by Claims 2 and 4
we have $X_{\Psi} = X(\omega)$ with equivalence of norms.
Let $x \in X_{\Psi}$ and let us suppose that $\min\{1, \left|\log \frac{w_1}{w_0}\right|^{-1}\} =
(\log \frac{w_1}{w_0})^{-1}$.
Then $(\log \frac{w_1}{w_0})^{-1} x \in X(\omega_{\theta}) = X_{\Phi}$, and the function $B_{\theta}(x)(z) = \Big(\frac{w_1}{w_0}\Big)^{z - \theta} (\log \frac{w_1}{w_0})^{-1}x$
is in $\mathcal H$, its norm is $\|(\log\frac{w_1}{w_0})^{-1} x\|_{X(\omega_{\theta})} =\|x\|_{X(\omega)}$
and $\Psi(B_{\theta}(x)) = x$.
So $\Omega_{\Phi, \Psi} = \Phi(B_{\theta}(x)(z)) = (\log \frac{w_1}{w_0})^{-1} x$.\medskip

\section{Compatibility-like conditions}\label{compatible}

Some special pairs of interpolators were studied in \cite{caceso}:

\begin{defin}
A pair of interpolators $(\Psi,\Phi)$ on the same space $\mathcal H$ is called \emph{almost compatible}
when $\Psi (\ker \Phi)\subset X_\Phi$, and it is called \emph{compatible} when $\Psi (\ker \Phi)= X_\Phi$.
\end{defin}

In the case of compatible pairs $(\Psi,\Phi)$ ,
the exact sequence (\ref{exact-seq-bis}) becomes
\begin{equation}\label{exact-seq-compat}
\begin{CD}
0 @>>> X_\Phi @>>> d\Omega_{\Psi,\Phi}  @>>> X_\Phi @>>>0.
\end{CD}
\end{equation}
So $d\Omega_{\Psi,\Phi}$ is a twisted sum of $X_\Phi$ with itself. When $(\Psi, \Phi)$ is almost compatible, since the inclusion map $X_\Phi\to\Sigma$
is continuous, the map $\Psi :\ker \Phi\to X_\Phi$ is continuous by the closed graph theorem.
Similarly, when $(\Psi, \Phi)$ is compatible, the inclusion $X_\Phi\to X_\Psi$ is continuous.
\medskip

Let us now consider what occurs when the same differential is used to generate twisted sums with
larger spaces. This is interesting to cover the case of almost compatible interpolators.

\begin{defin}
Let $(\Psi, \Phi)$ be a pair of interpolators on $\mathcal{H}$.
We say that a subspace $Z$ of $\Sigma$ is \emph{suitable for $(\Psi, \Phi)$} if $\Psi(\ker\Phi)\subset Z$
and there is a norm $\|\cdot\|_Z$  on $Z$ such that $(Z,\|\cdot\|_Z)$ is a Banach space and the inclusion
$(Z,\|\cdot\|_Z)\to\Sigma$ is continuous.
\end{defin}

The derivation $\Omega_{\Psi,\Phi}$ and a suitable space $Z$ generate a derived space
$$
d\Omega_{\Psi,\Phi}(Z):=\dPsiZ =\{(w,x)\in\Sigma\times X_\Phi: w-\Omega_{\Psi,\Phi}x\in Z\},
$$
endowed with $\|(w,x)\|_{\Omega^Z_{\Psi,\Phi}}= \|w - \Omega_{\Psi,\Phi} x\|_Z + \|x\|_\Phi$,
which can be showed to be a quasi-norm arguing as in Remark \ref{rem:qnorm}.
We also obtain an exact sequence
\begin{equation}\label{exact-seq-Z}
\begin{CD}
0 @>>> Z @>>> d\Omega_{\Psi,\Phi}(Z) @>>> X_\Phi @>>>0
\end{CD}
\end{equation}
with inclusion $w\to (w,0)$ and quotient map $(w,x)\to x$.
\medskip

The case $(\Psi, \Phi)$ almost compatible and $Z=X_\Phi$ was studied in \cite{caceso}.

\begin{defin}\label{def:Dom-Ran}
Let $(\Psi, \Phi)$ be a pair of interpolators on $\mathcal{H}$, and let $Z$ be a suitable space.
We define the \emph{domain} and the \emph{range} of $\Omega_{\Psi, \Phi}$
\emph{with respect to the exact sequence (\ref{exact-seq-Z})} as follows:
$$
\Dom(\Omega^Z_{\Psi, \Phi})= \{x\in X_\Phi: \Omega_{\Psi, \Phi}(x)\in Z\}
$$
endowed with\;
$\|x\|_{\Dom(\Omega^Z_{\Psi, \Phi})} = \|\Omega_{\Psi, \Phi} x\|_Z + \|x\|_\Phi$, and
$$
\Ran(\Omega^Z_{\Psi, \Phi})= \{w\in \Sigma : \exists x\in X_\Phi,\; w - \Omega_{\Psi, \Phi}(x)\in Z\}
$$
endowed with\; $\|w\|_{\Ran(\Omega^Z_{\Psi, \Phi})} = \inf \{\|w - \Omega_{\Psi, \Phi}(x)\|_Z +
\|x\|_\Phi : x\in X_\Phi,\; w - \Omega_{\Psi, \Phi}(x)\in Z\}$.
\end{defin}

The arguments in the proof of Proposition \ref{dom-ran} give the following result:

\begin{prop}\label{dom-ran-Z}
Let $(\Psi, \Phi)$ be a pair of interpolators on $\mathcal{H}$, and let $Z$ be a suitable space.
Then the maps $J x= (0,x)$ and $Q(w,y)=w$ define an exact sequence
\begin{equation}\label{seq-DomRan}
\begin{CD}
0@>>> \Dom(\Omega^Z_{\Psi,\Phi}) @>J>> d\Omega_{\Psi,\Phi}(Z) @>Q>> \Ran(\Omega^Z_{\Psi,\Phi}) @>>>0.
\end{CD}
\end{equation}
with $\|J x\|_{\Omega^Z_{\Psi,\Phi}}=\|x\|_{\Dom(\Omega^Z_{\Psi, \Phi})}$ and
$\|w\|_{\Ran(\Omega^Z_{\Psi,\Phi})} =\inf\{\|(w,x)\|_{\Omega_{\Psi,\Phi}}: (w,x)\in d\Omega_{\Psi,\Phi}(Z)\}$.
\end{prop}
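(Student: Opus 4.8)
The plan is to reproduce, essentially verbatim, the proof of Proposition~\ref{dom-ran}, checking at each step that the only features of $\Psi(\ker\Phi)$ used there are exactly those shared by an arbitrary suitable space $Z$: that $(Z,\|\cdot\|_Z)$ is a Banach space, that $\Psi(\ker\Phi)\subset Z$ (so that $d\Omega_{\Psi,\Phi}(Z)$ and $\Ran(\Omega^Z_{\Psi,\Phi})$ make sense), and that the inclusion $(Z,\|\cdot\|_Z)\to\Sigma$ is continuous. No other property of $\Psi(\ker\Phi)$ (in particular, nothing about the quotient norm $\|\cdot\|_q$ being of that specific form) intervenes, so the whole argument transfers.

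Concretely, I would proceed in the same four steps. First, $x\in\Dom(\Omega^Z_{\Psi,\Phi})$ if and only if $\Omega_{\Psi,\Phi}x\in Z$, i.e.\ if and only if $(0,x)\in d\Omega_{\Psi,\Phi}(Z)$, and then $\|(0,x)\|_{\Omega^Z_{\Psi,\Phi}}=\|\Omega_{\Psi,\Phi}x\|_Z+\|x\|_\Phi=\|x\|_{\Dom(\Omega^Z_{\Psi,\Phi})}$; hence $J$ is a well-defined isometric embedding and $\mathrm{Im}(J)=\{(0,x)\in d\Omega_{\Psi,\Phi}(Z)\}=\ker Q$. Second, $\mathrm{Im}(J)$ is closed: if $(0,x_n)\to(y,x)$ in $d\Omega_{\Psi,\Phi}(Z)$, then $x_n\to x$ in $X_\Phi$ and $y-\Omega_{\Psi,\Phi}(x-x_n)\to 0$ in $Z$, hence in $\Sigma$; since $B_\Phi$ is homogeneous (so $B_\Phi 0=0$ and $\Omega_{\Psi,\Phi}0=0$) and continuous at $0$, and since the inclusion $X_\Phi\to\Sigma$ is continuous, we get $\Omega_{\Psi,\Phi}(x-x_n)\to0$ in $\Sigma$, whence $y=0$. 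Third, $Q$ is well defined and surjective: $(w,x)\in d\Omega_{\Psi,\Phi}(Z)$ gives $w-\Omega_{\Psi,\Phi}x\in Z$, so $w\in\Ran(\Omega^Z_{\Psi,\Phi})$; conversely any $w\in\Ran(\Omega^Z_{\Psi,\Phi})$ admits $x\in X_\Phi$ with $w-\Omega_{\Psi,\Phi}x\in Z$, so $(w,x)\in d\Omega_{\Psi,\Phi}(Z)$ and $Qw=w$. Fourth, the norm identity for $\Ran$ is immediate by comparing $\|(w,x)\|_{\Omega^Z_{\Psi,\Phi}}=\|w-\Omega_{\Psi,\Phi}x\|_Z+\|x\|_\Phi$ with the definition of $\|\cdot\|_{\Ran(\Omega^Z_{\Psi,\Phi})}$ in Definition~\ref{def:Dom-Ran}.

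I do not expect a genuine obstacle here: the content is purely the bookkeeping verification that the proof of Proposition~\ref{dom-ran} never exploited the identity $Z=\Psi(\ker\Phi)$ beyond the three structural properties above. The one point that deserves an explicit word is the continuity of the inclusion $Z\to\Sigma$, which is what licenses passing to the limit in $\Sigma$ in the closedness step; since this continuity is built into the notion of suitable space, the transfer is clean, and the quasi-norm character of $\|\cdot\|_{\Omega^Z_{\Psi,\Phi}}$ (needed only to speak of an exact sequence of quasi-Banach spaces) has already been granted by the remark following Definition~\ref{def:Dom-Ran}, arguing as in Remark~\ref{rem:qnorm}.
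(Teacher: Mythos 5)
Your proposal is correct and is exactly the paper's approach: the paper simply remarks that ``the arguments in the proof of Proposition~\ref{dom-ran} give the following result'', and you carry out that transfer step by step, correctly identifying that the only properties of $\Psi(\ker\Phi)$ used there (being a Banach space containing $\Psi(\ker\Phi)$ with continuous inclusion into $\Sigma$) are exactly what the notion of suitable space supplies. The closedness argument for $\operatorname{Im}(J)$ is handled the same way, with the continuity of $Z\hookrightarrow\Sigma$ licensing passage to the limit in $\Sigma$, just as the paper uses continuity of the inclusion into $\Sigma$ in the $Z=\Psi(\ker\Phi)$ case.
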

%
%
%

\begin{corollary}
The spaces $\Dom(\Omega^Z_{\Psi,\Phi})$ and $\Ran(\Omega^Z_{\Psi,\Phi})$, endowed with their
respective quasi-norms, are complete.
\end{corollary}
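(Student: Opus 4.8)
The plan is to derive completeness directly from the exact sequence (\ref{seq-DomRan}) furnished by Proposition~\ref{dom-ran-Z}, repeating the argument already used for Proposition~\ref{dom-ran} and its corollary. Three facts suffice: (a) the middle space $d\Omega_{\Psi,\Phi}(Z)$ is a quasi-Banach space; (b) in (\ref{seq-DomRan}) the operator $J$ is an isometric embedding with closed range; and (c) the operator $Q$ induces on $\Ran(\Omega^Z_{\Psi,\Phi})$ exactly the quotient quasi-norm. Granting (a)--(c): by (b) the space $\Dom(\Omega^Z_{\Psi,\Phi})$ is isometric, via $J$, to a closed subspace of the complete space $d\Omega_{\Psi,\Phi}(Z)$, hence complete; and by (c), $\Ran(\Omega^Z_{\Psi,\Phi})$ is isometric to the quotient of $d\Omega_{\Psi,\Phi}(Z)$ by the closed subspace $\textrm{Im}(J)=\ker Q$, hence complete as well. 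The two passages ``a closed subspace of a complete quasi-normed space is complete'' and ``a quotient of a complete quasi-normed space by a closed subspace is complete'' are standard; to be scrupulous one first invokes the Aoki--Rolewicz theorem to pass to an equivalent $p$-norm.

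Of the three facts, (a) is the known statement that the middle term of an exact sequence of quasi-Banach spaces is complete once the two ends are: one applies it to the exact sequence (\ref{exact-seq-Z}), whose ends $Z$ and $X_\Phi$ are Banach spaces by the definition of a suitable space. This is the same mechanism that makes $d\Omega_{\Psi,\Phi}$ itself complete, and it rests on the quasi-linearity estimate recorded in Remark~\ref{rem:qnorm} together with $\Psi(\ker\Phi)\subset Z$. Fact (c) is literally the second displayed identity of Proposition~\ref{dom-ran-Z}, namely $\|w\|_{\Ran(\Omega^Z_{\Psi,\Phi})}=\inf\{\|(w,x)\|_{\Omega^Z_{\Psi,\Phi}}:(w,x)\in d\Omega_{\Psi,\Phi}(Z)\}$, which is precisely the quotient quasi-norm for $Q(w,x)=w$; and the isometric character of $J$ is the first identity there.

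So the only thing left to check is the closedness part of (b), and I would simply transcribe the corresponding argument from the proof of Proposition~\ref{dom-ran}. If $(0,x_n)\to(y,x)$ in $d\Omega_{\Psi,\Phi}(Z)$, then $x_n\to x$ in $X_\Phi$ and $\Omega_{\Psi,\Phi}(x_n-x)\to -y$ in $Z$, hence also in $\Sigma$, since the inclusion $Z\hookrightarrow\Sigma$ is continuous (this is built into the notion of a suitable space, and is the only place where $Z$, rather than $\Psi(\ker\Phi)$, enters the argument); on the other hand $\Omega_{\Psi,\Phi}\colon X_\Phi\to\Sigma$ is continuous at $0$, because $\|\Omega_{\Psi,\Phi}x\|_\Sigma=\|\Psi B_\Phi x\|_\Sigma\leq\|\Psi\|(1+\varepsilon)\|x\|_\Phi$, so $\Omega_{\Psi,\Phi}(x_n-x)\to 0$ in $\Sigma$. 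Comparing the two limits in the Hausdorff space $\Sigma$ gives $y=0$, i.e. $(y,x)=(0,x)\in\textrm{Im}(J)$. I do not expect any genuine obstacle here: the whole argument is a routine adaptation of the $Z=\Psi(\ker\Phi)$ case, the only adjustment being the use of the continuity of $Z\hookrightarrow\Sigma$ in the closedness step.
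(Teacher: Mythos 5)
Your argument is correct and is precisely the one the paper implicitly intends: the paper omits a proof of this corollary, deferring to the corresponding argument for Proposition~\ref{dom-ran} and the corollary following it (attributed there to \cite{cabenon}), which is exactly what you reproduce. The only adaptations you make --- using the continuity of $Z\hookrightarrow\Sigma$ from the definition of a suitable space, and noting that completeness of $d\Omega_{\Psi,\Phi}(Z)$ follows from the three-space property for completeness applied to the exact sequence (\ref{exact-seq-Z}) with Banach endpoints $Z$ and $X_\Phi$ --- are the correct and necessary ones.
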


The following result gives a description of the domain in a non-symmetric case.

\begin{prop}\label{prop:dom-Z} \emph{\cite[Theorem 3.8]{caceso}}
Let $(\Psi,\Phi)$ be an almost compatible pair of interpolators on $\mathcal{H}$.
Then $\Dom(\Omega^{X_\Phi}_{\Psi,\Phi})= \Phi\left(\Psi^{-1}X_\Phi\right)$.
\end{prop}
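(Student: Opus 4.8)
The plan is to establish the two inclusions defining the set equality $\Dom(\Omega^{X_\Phi}_{\Psi,\Phi})= \Phi\left(\Psi^{-1}X_\Phi\right)$, where by $\Psi^{-1}X_\Phi$ we mean the preimage $\{f\in\mathcal H:\Psi f\in X_\Phi\}$. The key tool in both directions is the defining relation $\Phi B_\Phi x = x$ together with the fact that $B_\Phi x - f \in \ker\Phi$ whenever $\Phi f = x$, exactly as used in the proof of Proposition \ref{prop:dom-ran}(1). The only structural difference from that earlier proof is that $\Psi(\ker\Phi)$ is replaced by the (a priori larger) space $X_\Phi$; since the pair is almost compatible we have $\Psi(\ker\Phi)\subset X_\Phi$, so $X_\Phi$ is indeed a suitable space and Definition \ref{def:Dom-Ran} applies.

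For the inclusion $\Dom(\Omega^{X_\Phi}_{\Psi,\Phi})\subset \Phi(\Psi^{-1}X_\Phi)$: take $x\in\Dom(\Omega^{X_\Phi}_{\Psi,\Phi})$, so $x\in X_\Phi$ and $\Omega_{\Psi,\Phi}x=\Psi B_\Phi x\in X_\Phi$. Then $B_\Phi x\in\Psi^{-1}X_\Phi$ and $\Phi(B_\Phi x)=x$, so $x\in\Phi(\Psi^{-1}X_\Phi)$. This direction is essentially immediate. For the reverse inclusion $\Phi(\Psi^{-1}X_\Phi)\subset\Dom(\Omega^{X_\Phi}_{\Psi,\Phi})$: take $x=\Phi f$ with $\Psi f\in X_\Phi$. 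Then $B_\Phi x - f\in\ker\Phi$, so $\Psi(B_\Phi x - f)\in\Psi(\ker\Phi)\subset X_\Phi$; adding $\Psi f\in X_\Phi$ gives $\Omega_{\Psi,\Phi}x=\Psi B_\Phi x\in X_\Phi$, hence $x\in\Dom(\Omega^{X_\Phi}_{\Psi,\Phi})$. Here the almost-compatibility hypothesis is what makes $\Psi(B_\Phi x - f)$ land in $X_\Phi$ — this is the only place it is genuinely used, and it is the step one should be careful about, though it is hardly an obstacle.

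If one also wants the equivalence of norms (which the statement as quoted does not explicitly demand, but which is natural to record), one would equip $\Psi^{-1}X_\Phi$ with $\|f\|=\|\Psi f\|_\Phi+\|f\|_{\mathcal H}$ and $\Phi(\Psi^{-1}X_\Phi)$ with the corresponding quotient norm, then compare with $\|x\|_{\Dom(\Omega^{X_\Phi}_{\Psi,\Phi})}=\|\Omega_{\Psi,\Phi}x\|_\Phi+\|x\|_\Phi$; the estimates follow from the continuity of $\Psi:\ker\Phi\to X_\Phi$ (guaranteed by the closed graph theorem in the almost compatible case, as already noted in the text), the bound $\|B_\Phi x\|_{\mathcal H}\leq(1+\varepsilon)\|x\|_\Phi$, and the quasi-triangle inequality. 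I expect no serious difficulty anywhere; the main point is simply to unwind the definitions correctly and to observe exactly where almost-compatibility enters.
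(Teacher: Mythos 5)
Your proof is correct. The paper states this proposition with a citation to \cite{caceso} and does not reprove it, but your argument is exactly the one that works: it is the direct analogue of the paper's proof of Proposition~\ref{prop:dom-ran}(1), with $\ker\Psi$ replaced by $\Psi^{-1}X_\Phi$ and with the almost-compatibility hypothesis $\Psi(\ker\Phi)\subset X_\Phi$ invoked at precisely the step you flag (to get $\Psi(B_\Phi x-f)\in X_\Phi$ in the reverse inclusion).
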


It would be interesting to determine $\Ran(\Omega^{X_\Phi}_{\Psi,\Phi})$ for $(\Psi,\Phi)$
almost compatible.

\section{The general form of a commutator theorem}\label{sect:commutator}

Recall that given two maps $A,B$ in the suitable conditions, their commutator is defined as the map $[A,B]=AB-BA$. The purpose of this section is to show that there is just one commutator theorem from which all the existing versions can be derived.
Precisely:

\begin{theorem}[Abstract commutator theorem]\label{th:commutator}
Let $(X_0, X_1)$ be an interpolation couple of Banach spaces and let $(\Psi,\Phi)$ be a pair of interpolators
on $\mathcal H$. If $\tau$ is an operator on the scale then there is a commutative diagram
$$\begin{CD}\label{right}
0 @>>> \Dom (\Omega_{\Phi, \Psi}) @>>> X_{\Phi,\Psi} @>>> \Ran (\Omega_{\Phi, \Psi})  @>>>0\\
&&@V{\tau}VV @VV{(\tau, \tau)}V @VV\tau V\\
0 @>>> \Dom( \Omega_{\Phi, \Psi}) @>>> X_{\Phi,\Psi} @>>> \Ran (\Omega_{\Phi, \Psi}) @>>>0
\end{CD}$$
where $(\tau, \tau)$ represents the operator $(\tau, \tau)(w,x)=(\tau w, \tau x)$.
Equivalently, the commutator map $[\tau,\Omega_{\Psi,\Phi}]: \Ran(\Omega_{\Phi,\Psi})\to \Dom(\Omega_{\Phi,\Psi})$
is bounded and satisfies the following estimate
$$
\|[\tau, \Omega_{\Psi,\Phi}]\|\leq \max \left\{ \|\tau: \Psi(\ker \Phi)\to  \Psi(\ker \Phi)\|,
\|\tau: X_\Phi\to X_\Phi\|, 2\|T\| \|B_\Phi\|\right\}.
$$
\end{theorem}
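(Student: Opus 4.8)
The plan is to work with the concrete realization $X_{\Phi,\Psi}=(\Phi,\Psi)(\mathcal H)$ and to exploit the intertwining identities $\tau\Phi=\Phi T$ and $\tau\Psi=\Psi T$, where $T:\mathcal H\to\mathcal H$ is the operator generated by $t=\tau|_\Sigma$ acting on the scale.

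First I would check that $(\tau,\tau)$ maps $X_{\Phi,\Psi}$ into itself and is bounded, with $\|(\tau,\tau)\|\le\|T\|$: if $(w,x)=(\Phi f,\Psi f)$ with $f\in\mathcal H$, then $(\tau w,\tau x)=(\Phi Tf,\Psi Tf)\in(\Phi,\Psi)(\mathcal H)$, whence $\|(\tau w,\tau x)\|_{X_{\Phi,\Psi}}\le\|Tf\|_{\mathcal H}\le\|T\|\,\|f\|_{\mathcal H}$, and one takes the infimum over such $f$. Next I would invoke Propositions \ref{isomorphism}, \ref{dom-ran} and \ref{prop:dom-ran}, read with the roles of $\Psi$ and $\Phi$ interchanged: they identify $X_{\Phi,\Psi}$ with the derived space attached to the sequence (\ref{eq:seq-DomRan}) for the couple $(\Phi,\Psi)$, in which $\Dom(\Omega_{\Phi,\Psi})=\Psi(\ker\Phi)$ is embedded as $x\mapsto(0,x)$, the quotient map is $Q(w,x)=w$ onto $\Ran(\Omega_{\Phi,\Psi})=X_\Phi$, and the associated derivation is $\Omega_{\Psi,\Phi}$. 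A direct verification shows that $(\tau,\tau)$ preserves the distinguished subspace, acting there as the interpolated operator $\tau$ on $\Psi(\ker\Phi)$, and induces on the quotient the interpolated operator $\tau$ on $X_\Phi$; since the three vertical arrows are then just coordinatewise restrictions and corestrictions of $(\tau,\tau)$, the square commutes automatically, the end operators being bounded by the estimates recalled in Section \ref{symmetric-sequences}. This establishes the diagram.

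For the equivalent commutator formulation I would use the homogeneous bounded section $s(w)=(w,\Omega_{\Psi,\Phi}w)=(\Phi B_\Phi w,\Psi B_\Phi w)=(\Phi,\Psi)(B_\Phi w)$ of $Q$, which satisfies $\|s(w)\|_{X_{\Phi,\Psi}}\le\|B_\Phi w\|_{\mathcal H}$. Since $Q((\tau,\tau)s(w))=\tau w=Q(s(\tau w))$, the element $(\tau,\tau)s(w)-s(\tau w)$ lies in $\ker Q=\Dom(\Omega_{\Phi,\Psi})=\Psi(\ker\Phi)$; computing it with $\tau\Psi B_\Phi w=\Psi TB_\Phi w$ and noting that $\Phi(TB_\Phi w-B_\Phi\tau w)=\tau w-\tau w=0$, one gets
$$
(\tau,\tau)s(w)-s(\tau w)=(0,\ \tau\Omega_{\Psi,\Phi}w-\Omega_{\Psi,\Phi}\tau w)=(0,\ \Psi(TB_\Phi w-B_\Phi\tau w)),
$$
so that $[\tau,\Omega_{\Psi,\Phi}]=\Psi(TB_\Phi-B_\Phi\tau)$ really does send $\Ran(\Omega_{\Phi,\Psi})=X_\Phi$ into $\Dom(\Omega_{\Phi,\Psi})=\Psi(\ker\Phi)$, and is exactly the obstruction to $(\tau,\tau)$ respecting the section. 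For the norm estimate I would bound $\|[\tau,\Omega_{\Psi,\Phi}]w\|_q\le\|TB_\Phi w-B_\Phi\tau w\|_{\mathcal H}\le 2\|T\|\,\|B_\Phi\|\,\|w\|_\Phi$ (using $\|B_\Phi(\tau w)\|_{\mathcal H}\le\|B_\Phi\|\,\|\tau w\|_\Phi$ together with $\|\tau w\|_\Phi\le\|TB_\Phi w\|_{\mathcal H}$), and then combine this with the norms $\|\tau:\Psi(\ker\Phi)\to\Psi(\ker\Phi)\|$ and $\|\tau:X_\Phi\to X_\Phi\|$ of the two end operators, decomposing the norm of $(\tau,\tau)$ along the exact sequence; their maximum dominates, giving the stated bound.

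I expect the substance to be entirely routine once the right symmetric picture is set up; the difficulties are purely bookkeeping. One must keep track of which of the two exact sequences carried by $X_{\Phi,\Psi}$ is in play, and hence which coordinate carries which space and norm, and one must fit the constants into a single maximum — the two end estimates are immediate, while the commutator term genuinely needs the explicit formula $[\tau,\Omega_{\Psi,\Phi}]=\Psi(TB_\Phi-B_\Phi\tau)$ and the elementary estimates above.
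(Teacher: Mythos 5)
Your proposal is correct and follows essentially the same route as the paper: both proofs reduce everything to the intertwining identities $\tau\Phi=\Phi T$, $\tau\Psi=\Psi T$, the identification $\Dom(\Omega_{\Phi,\Psi})=\Psi(\ker\Phi)$, $\Ran(\Omega_{\Phi,\Psi})=X_\Phi$ from Proposition \ref{prop:dom-ran}, and the key formula $\tau\Omega_{\Psi,\Phi}w-\Omega_{\Psi,\Phi}\tau w=\Psi\bigl(TB_\Phi w-B_\Phi\tau w\bigr)$, with the same norm bound. The only cosmetic differences are that you obtain boundedness of $(\tau,\tau)$ cleanly from the quotient-norm representation of $X_{\Phi,\Psi}$ (rather than computing it on the derived-space norm as the paper does) and then isolate the commutator via the explicit section $s(w)=(\Phi,\Psi)(B_\Phi w)$, whereas the paper verifies well-definedness and continuity of the three vertical arrows directly and reads off the commutator estimate as a byproduct; both are valid bookkeeping for the same computation.
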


\begin{proof} If the reader has been surprised to see $\Omega_{\Phi, \Psi}$ --the symmetric ``unknown" derivation--
observe that the result has been formulated so that the derivation that generates the exact sequences
is the ``well known" $\Omega_{\Psi, \Phi}$ and the commutative diagram above is exactly
$$
\begin{CD}
0 @>>> \Psi(\ker \Phi)  @>>>  X_{\Psi,\Phi} @>>> X_\Phi @>>>0\\
&&@V\tau VV @VV(\tau, \tau)V @VV\tau V\\
0 @>>> \Psi(\ker \Phi)  @>>> X_{\Psi,\Phi}  @>>> X_\Phi  @>>>0\end{CD}
$$
Observe that the statement makes sense since $x \in X_\Phi = \rm Ran(\Omega_{\Phi,\Psi})$ implies that $(\tau \Omega_{\Phi,\Psi} - \Omega_{\Phi,\Psi}\tau )x \in \Psi(\ker \Phi)$. Moreover, the operator $\tau: \Psi(\ker \Phi)\to \Psi(\ker \Phi)$ is well-defined and continuous:
if $x\in \Psi(\ker \Phi)$, i.e., $x=\Psi(g)$ with $\Phi(g)=0$ then $\Phi(Tg) = \tau(\Phi g) = 0$ and thus
$$
\tau(x) = \tau ( \Psi(g) ) = \Psi(Tg)) \in \Psi(\ker \Phi).
$$
The operator $(\tau, \tau):   X_{\Psi,\Phi}\to   X_{\Psi,\Phi}$ is well-defined: if $(w, x)\in X_{\Psi,\Phi}$,
namely $w-\Psi B_\Phi x\in \Psi(\ker\Phi)$ then  $w-\Psi B_\Phi x=\Psi g$ for some $g\in \ker \Phi$ and thus

\begin{eqnarray*}
\tau w - \Psi B_\Phi \tau x &=& \tau w - \Psi T B_\Phi x + \Psi T B_\Phi x - \Psi B_\Phi \tau x\\
&=& \tau w - \tau \Psi B_\Phi x  + \Psi T B_\Phi x - \Psi B_\Phi \tau x\\
&=& \tau (w - \Psi B_\Phi x ) + \Psi \left (T B_\Phi x - B_\Phi \tau x\right)\\
&=&\tau \Psi(g) + \Psi \left (T B_\Phi x - B_\Phi \tau x\right)\\
&=& \Psi (Tg)+  \Psi \left (T B_\Phi x - B_\Phi \tau x\right)
\end{eqnarray*}
which means that $\tau w -\Psi B_\Phi \tau x \in \Psi(\ker \Phi)$ since both $Tg$ and
$T B_\Phi x - B_\Phi \tau x$ belong to $\ker \Phi$ and thus $(\tau w, \tau x)\in X_{\Psi, \Phi}$.
It is continuous since, by the previous identity,
\begin{eqnarray*}
\|(\tau w, \tau x)\| &=&  \|\tau w - \Psi B_\Phi \tau x \|_{\Psi(\ker \Phi)} + \|\tau x\|_{\Phi}\\
  &\leq& \|\Psi T g\|_{\Psi(\ker \Phi)} + \|\Psi \left (T B_\Phi x - B_\Phi \tau x\right)
     \|_{\Psi(\ker \Phi)} + \|\tau x\|_{\Phi}\\
  &\leq& C\|(w, x)\| +  \|[\tau, \Omega_{\Psi,\Phi}](x)\|_{\Psi(\ker \Phi)},
\end{eqnarray*}
where $C=\max\left\{\|\tau:\Psi(\ker\Phi)\to  \Psi(\ker\Phi)\|, \|\tau: X_\Phi\to X_\Phi\|\right\}$, and
\begin{eqnarray*}
\|[\tau, \Omega_{\Psi,\Phi}](x)\|_{\Psi(\ker \Phi)}&=&\|\tau \Omega_{\Psi,\Phi} x -
    \Omega_{\Psi,\Phi} \tau x\|_{\Psi(\ker \Phi)}\\
&=&\|\tau \Psi B_\Phi (x) - \Psi B_\Phi (\tau x) \|_{\Psi(\ker \Phi)}\\
&=&\|\Psi T B_\Phi (x) - \Psi B_\Phi (\tau x) \|_{\Psi(\ker \Phi)}\\
&\leq&2\|T\| \|B_\Phi\| \|x\|.
\end{eqnarray*}
\end{proof}

When $(\Psi, \Phi)$ are compatible interpolators; i.\ e., $\Psi(\ker \Phi)= X_\Phi$ then the diagram in the
statement of Theorem \ref{th:commutator} adopts the more standard form:
$$\begin{CD}\label{comp}
0 @>>> X_\Phi  @>>> X_{\Psi,\Phi} @>>> X_\Phi @>>>0\\
&&@V{\tau}VV @VV{(\tau, \tau)}V @VV\tau V\\
0 @>>> X_\Phi @>>> X_{\Psi,\Phi} @>>> X_\Phi @>>>0\end{CD}
$$
and yields a recognizable estimate: the commutator map $[\tau, \Omega_{\Psi, \Phi}]: X_\Phi\to  X_\Phi$
is bounded. The symmetric version of Theorem \ref{th:commutator} has exactly
the same form just interchanging $\Psi$ and $\Phi$. Even if from the abstract point of view both theorems are ``the same" they may lead to quite different concrete estimates. A few examples follow:

\subsection{Commutator theorem for weighted spaces}
We refer to Example \ref{ex:2}. Let $X_0=X(w_0)$ and $X_1=X(w_1)$ be weighted versions of the same base space $X$.
In this case the commutator theorem for $\Omega = \Omega_{\delta', \delta} $ and its symmetric form for
$\mho= \Omega_{\delta, \delta'}$ are the same: call $a=\log\frac{w_1}{w_0}$ so that $\Omega(f)=af$ and let $min = \min\{1, \left|a\right|^{-1}\} w_{\theta}$ and $max = \max\{1, \left|a\right|\} w_{\theta}$. The continuity
of the commutator $[\tau, \Omega]: X ( w_\theta)  \to X (w_\theta)$ means
$$
\left\|w_\theta \left( \tau(af) - a\tau f \right) \right\|_X \leq \|w_\theta f\|_X
$$
while when $\mho g= a^{-1}g$, the continuity of $[\tau, \mho]: X ( min )  \to X (max )$ means
$$\left\|\tau (a^{-1}g) - a^{-1} \tau g \right\|_{max} \leq \|g\|_{min}$$

Thus, assuming $min = \left|a\right|^{-1}w_\theta$ and $max = w_\theta \left|a\right|$ then
$\left\|w_\theta a\left(\tau (a^{-1}g) - a^{-1} \tau g\right) \right\|_X \leq \|a^{-1}w_\theta g\|_X$ which,
by simple change of variable $g=af$, becomes, as we knew,

$$\left\|w_\theta a \left( \tau (f) - a^{-1} \tau (af) \right) \right\|_X =
\left\|w_\theta  \left( a\tau (f) - \tau (af) \right) \right\|_X  \leq \|w_\theta f\|_X.$$

\subsection{Commutator theorems for Lorentz spaces.}
Picking the couple $(L_{p_0}, L_{p_1})$ the derivation at $ \frac{1}{p} = (1-\theta)\frac{1}{p_0} + \theta\frac{1}{p_1}$
is the Kalton-Peck map $\mathscr{K}(x) = p(\frac{1}{p_0} - \frac{1}{p_1}) x \log \frac{|x|}{\|x\|_p}$,
and the standard commutator theorem means the estimate
$$
\left \|\tau\left (x\log \frac{|x|}{\|x\|_p}\right) - \tau(x) \log \frac{|\tau (x)|}{\|\tau(x)\|_p}\right \|_p \leq C \|x\|_p
$$
The symmetric commutator theorem means the estimate

$$\left \|\tau( \mho x) - \mho \left( \tau(x)\right) \right \|_{\ell_f} \leq C \|x\|_{\ell_f^*}$$

The more general version for Lorentz spaces $L_{p,q}$ is as follows.
Recall from \cite{cabe2} that $(L_{p_0, q_0}, L_{p_1, q_1})_\theta = L_{p,q}$ for
$\frac{1}{p} = (1-\theta)\frac{1}{p_0} + \theta\frac{1}{p_1}$ and $ \frac{1}{q} = (1-\theta)\frac{1}{q_0} + \theta\frac{1}{q_1}$
with derivation
$$
\Omega(x)=  q\left(\dfrac{1}{q_1}-\frac{1}{q_0}\right)\mathscr K(x)    +     \left(\frac{q}{p}\left(\dfrac{1}{q_0}-\frac{1}{q_1}\right)-\left(\dfrac{1}{p_0}-\frac{1}{p_1}\right)\right)\kappa(x)
$$
Here $\kappa$ denotes the Kalton map given by $\kappa(x) = x \; r_x$ where $r_x$ is the rank
function $r_x(t) = m\{s :|x(s)| >|x(t)|$ or $ |x(s)| =|x(t)|$ and $s \leq t\}$.
The case of $L_p$ spaces follows from this by setting $q_0=p_0$ and $q_1=p_1$. One thus gets the commutator estimate

$$\left \|\tau \Omega (f) - \Omega (\tau(x))\right \|_{p,q} \leq C \|f\|_{p,q}$$

\subsection{Commutator theorems for Orlicz spaces.}
Recall that an \emph{$N$-function} is a map $\varphi:[0,\infty)\ra[0,\infty)$ which is strictly
increasing, continuous, $\varphi(0)=0$, $\varphi(t)/t\ra 0$ as $t\ra 0$, and $\varphi(t)/t\ra \infty$
as $t\ra \infty$. An $N$-function $\varphi$ satisfies the \emph{$\Delta_2$-property} if there exists a number $C>0$
such that $\varphi(2t)\leq C\varphi(t)$ for all $t\geq 0$. When an $N$-function $\varphi$ satisfies the $\Delta_2$-property, the \emph{Orlicz space} $L_\varphi(\mu)$ is $L_\varphi(\mu) = \{f\in L_0(\mu) : \varphi(|f|)\in L_1(\mu)\}$ endowed with the norm
$\|f\|=\inf \{r>0 : \int \varphi(|f|/r) d\mu \leq 1\}.$

A combination of \cite{Gustavsson} and \cite{cfg} yields complex interpolation and the associated derivation. Given $\varphi_0$ and $\varphi_1$ two $N$-functions satisfying the $\Delta_2$-property and  $0<\theta<1$ then $\varphi^{-1} = \big(\varphi_0^{-1}\big)^{1-\theta} \big(\varphi_1^{-1}\big)^\theta$ is an $N$-function $\varphi$ satisfying the $\Delta_2$-property and $\big(L_{\varphi_0}(\mu), L_{\varphi_1}(\mu)\big)_\theta = L_\varphi(\mu)$. In particular, when $t = \varphi_0^{-1}(t) \varphi_1^{-1}(t)$ we have $\big(L_{\varphi_0}(\mu), L_{\varphi_1}(\mu)\big)_{1/2} = L_2(\mu)$ with associated derivation $
\Omega_{1/2}(f)= f\, \log \frac{\varphi_1^{-1}(f^2)}{\varphi_0^{-1}(f^2)}$ for $\|f\|_2 = 1$. The direct commutator estimate is thus
$$
\left \|\tau\left ( f \log \frac{ \|f\|_2 \varphi_1^{-1}(\frac{f^2}{\|f\|_2^2})}{f} \right) - \tau(f) \log \frac{ \|\tau(f)\|_2 \varphi_1^{-1}(\frac{\tau(f)^2}{\|\tau(f)\|^2_2})}{\tau(f)}\right \|_2 \leq C \|f\|_2
$$

The determination of the spaces $d\Omega_{\frac{1}{2}}$, $\rm Dom(\Omega_{\frac{1}{2}})$ and $\rm Ran(\Omega_{\frac{1}{2}})$ will be delayed to \cite{cabecor} since it requires a somewhat contorted digression into the theory of Fenchel-Orlicz spaces. The next example deals with a simpler case.

\subsection{Concavification and Fenchel-Orlicz spaces}

According to \cite{cfg}, if $X$ is a Banach space with $1$-unconditional basis which is $p$-convex and $X^p$ is the $p$-concavification of $X$ then $(\ell_{\infty}, X^p)_{\theta} = X$ and
\[
\Omega_{\theta}(x) = p x \log \frac{\left|x\right|}{\|x\|_X}
\]

Accordingly, the commutator estimate is
$$
\left \|\tau\left ( x \log \frac{\left|x\right|}{\|x\|_X} \right) - \tau(x) \log \frac{\left|\tau(x)\right|}{\|\tau(x)\|_X} \right \|_\theta \leq C \|x\|_\theta
$$

Suppose $X = \ell_{\varphi}$ where $\varphi$ is an $N-$function for which there are $p > 1$ and $M > 0$ such that for every $\lambda \in (0, 1]$ and for every $s > 0$ we have $\frac{\varphi(\lambda s)}{\lambda^p \varphi(s)} \leq M$. If $X$ has type greater than $1$, then we may suppose that $\varphi$ has the previous property. Now, $\Omega_{\theta}$ is a multiple of the quasilinear map defined on \cite{Fenchel}, so that $d\Omega_{\theta}$ is isomorphic to the quasi-normed Fenchel-Orlicz space $\ell_{\psi}$, where $\psi : \mathbb{C}^2 \rightarrow [0, \infty)$ is given by $\psi(x, y) = \varphi(y) + \varphi(x - y \log \left|x\right|)$. The isomorphism $T : d\Omega_{\theta} \rightarrow \ell_{\psi}$ is given by $T(x, y) = (x, py)$. Let $\xi(t) = \psi(0, t)$. Then $\rm Dom(\Omega_{\theta}) = \ell_{\xi}$. Since $\varphi(t) \leq \varphi(t \log \left|t\right|)$ on a neighborhood of 0, actually $\rm  Dom(\Omega_{\theta}) = \ell_{\varphi(t \log\left|t\right|)}$.


\subsection{Commutator theorems for translation operators}
Cwikel, Kalton, Milman, Rochberg obtain in \cite[Theorem 3.8 (ii)]{ckmr} a commutator theorem for translation mappings
which, as they say \cite[p.278]{ckmr}:
\begin{displayquote}
\emph{On the other hand, it is not a all clear to us at this stage how one could obtain a result like part (ii) of
Theorem 3.8 in the abstract setting of \cite{caceso}}.
\end{displayquote}
The result was integrated in the  schema of \cite{caceso} by Cerd\`{a} in \cite[p.1018]{cerda}; it is Proposition \ref{cerda} below.
The idea observed by Cerd\`{a} is to consider the pair of (non-compatible) interpolators $(\Phi_\theta, \Phi_\nu)$
associated to a differential interpolation method in the sense of \cite{ckmr} as described in Example \ref{ex:ckmr}.
In which case the differential $\Phi_\theta B_{\Phi_\nu}$ is the translation map $\mathscr R_{\theta, \nu}$ as decribed in Example \ref{trans}. We thus have:

\begin{prop}\label{trans}\emph{[Commutator/symmetric commutator theorem for translation maps]}
There is a commutative diagram
$$\begin{CD}
0 @>>> \Phi_\theta(\ker \Phi_\nu)  @>>>  X_{\theta,\nu} @>>> X_\nu @>>>0\\
&&@V\tau VV @VV(\tau, \tau)V @VV\tau V\\
0 @>>> \Phi_\theta(\ker \Phi_\nu)  @>>> X_{\theta,\nu}  @>>> X_\nu  @>>>0\end{CD}
$$
Equivalently, the commutator map $[\tau, \mathscr R_{\theta, \nu}]: X_\nu \to  \Phi_\theta(\ker \Phi_\nu)$ is bounded
\end{prop}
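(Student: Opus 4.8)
The plan is to recognize Proposition~\ref{trans} as a direct instance of the abstract commutator theorem (Theorem~\ref{th:commutator}) applied to the non-compatible pair $(\Phi_\theta, \Phi_\nu)$ of evaluation interpolators coming from the differential method of Example~\ref{ex:ckmr}. First I would unwind the notation: with $\Psi = \Phi_\theta$ and $\Phi = \Phi_\nu$, the derivation $\Omega_{\Psi,\Phi} = \Psi B_\Phi = \Phi_\theta B_{\Phi_\nu}$ is exactly the translation map $\mathscr R_{\theta,\nu}$ described in Example~\ref{trans}, the space $X_\Phi$ is $X_\nu$, the space $\Psi(\ker\Phi)$ is $\Phi_\theta(\ker\Phi_\nu)$, and $X_{\Psi,\Phi}$ is $X_{\theta,\nu}$. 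By Proposition~\ref{prop:dom-ran} we have $\Ran(\Omega_{\Phi,\Psi}) = X_\Psi = X_\theta$ and $\Dom(\Omega_{\Phi,\Psi}) = \Phi(\ker\Psi) = \Phi_\nu(\ker\Phi_\theta)$; but Corollary~\ref{equiva} (with the roles of $\theta$ and $\nu$ exchanged) tells us $\Phi_\theta(\ker\Phi_\nu) = X_\theta$ with equivalence of norms, so the ``symmetric'' top and bottom spaces of the diagram in Theorem~\ref{th:commutator} collapse onto $\Phi_\theta(\ker\Phi_\nu)$ and $X_\nu$, which is precisely the form displayed in the statement.

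Next I would simply invoke Theorem~\ref{th:commutator} with $\tau$ any operator acting on the scale: it produces the commutative diagram
$$
\begin{CD}
0 @>>> \Phi_\theta(\ker \Phi_\nu)  @>>>  X_{\theta,\nu} @>>> X_\nu @>>>0\\
&&@V\tau VV @VV(\tau, \tau)V @VV\tau V\\
0 @>>> \Phi_\theta(\ker \Phi_\nu)  @>>> X_{\theta,\nu}  @>>> X_\nu  @>>>0
\end{CD}
$$
together with the boundedness of the commutator $[\tau,\mathscr R_{\theta,\nu}]$ from $\Ran(\Omega_{\Phi,\Psi})$ to $\Dom(\Omega_{\Phi,\Psi})$. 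Using the two identifications of the previous paragraph, $\Ran(\Omega_{\Phi,\Psi}) = X_\theta = \Phi_\theta(\ker\Phi_\nu)$ and $\Dom(\Omega_{\Phi,\Psi}) = \Phi_\nu(\ker\Phi_\theta) \hookrightarrow X_\nu$; since the commutator lands in $\Psi(\ker\Phi) = \Phi_\theta(\ker\Phi_\nu)$ by construction, the cleanest formulation is $[\tau,\mathscr R_{\theta,\nu}] : X_\nu \to \Phi_\theta(\ker\Phi_\nu)$ bounded, as claimed. The norm estimate of Theorem~\ref{th:commutator} transfers verbatim.

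The only genuine point requiring care — and what I expect to be the main (minor) obstacle — is checking that the hypotheses of Theorem~\ref{th:commutator} are actually met in this concrete setting, namely that an operator $\tau$ acting on the scale $(X_0,X_1)$ does induce operators on $\mathcal H = \mathscr J(\mathbf X, \overline B)$ and hence on $X_\theta$, $X_\nu$ and $\Phi_\theta(\ker\Phi_\nu)$; this is the standard fact that the differential method of \cite{ckmr} is an abstract interpolation method in our sense, which was already recorded in Example~\ref{ex:ckmr}. I would also remark, following Cerd\`a \cite{cerda}, that the content here is exactly \cite[Theorem~3.8(ii)]{ckmr}: the abstract machinery of Section~\ref{sect:commutator} subsumes it once one is willing to treat the non-compatible pair $(\Phi_\theta,\Phi_\nu)$, which is the whole point of working without a compatibility assumption. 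Beyond this identification, no further computation is needed.
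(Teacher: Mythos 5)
Your overall plan is the paper's own: Proposition~\ref{trans} is nothing more than Theorem~\ref{th:commutator} instantiated for the non-compatible pair $(\Psi,\Phi)=(\Phi_\theta,\Phi_\nu)$, in which case $\Omega_{\Psi,\Phi}=\Phi_\theta B_{\Phi_\nu}=\mathscr R_{\theta,\nu}$, $X_\Phi=X_\nu$, $\Psi(\ker\Phi)=\Phi_\theta(\ker\Phi_\nu)$ and $X_{\Psi,\Phi}=X_{\theta,\nu}$. However, your unwinding of Proposition~\ref{prop:dom-ran} for the swapped derivation is wrong. Proposition~\ref{prop:dom-ran} gives, for the pair $(\Psi,\Phi)$, that $\Dom(\Omega_{\Psi,\Phi})=\Phi(\ker\Psi)$ and $\Ran(\Omega_{\Psi,\Phi})=X_\Psi$. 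Applying it to $(\Phi,\Psi)$, i.e.\ with the roles exchanged, you get $\Dom(\Omega_{\Phi,\Psi})=\Psi(\ker\Phi)=\Phi_\theta(\ker\Phi_\nu)$ and $\Ran(\Omega_{\Phi,\Psi})=X_\Phi=X_\nu$ — exactly the opposite of what you wrote ($X_\theta$ and $\Phi_\nu(\ker\Phi_\theta)$). This is not a cosmetic slip: with your identifications the right-hand column of the diagram would be $X_\theta$, not $X_\nu$, and the final sentence ``the cleanest formulation is $[\tau,\mathscr R_{\theta,\nu}]:X_\nu\to\Phi_\theta(\ker\Phi_\nu)$'' is asserted but does not actually follow from the chain you set up. With the correct identifications the diagram of Theorem~\ref{th:commutator} (in the concrete form made explicit in its proof, $0\to\Psi(\ker\Phi)\to X_{\Psi,\Phi}\to X_\Phi\to 0$) already reads verbatim as the diagram of Proposition~\ref{trans}; in particular Corollary~\ref{equiva} is not needed at this stage and its use here only obscures the argument.

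Two further remarks. First, the invocation of Corollary~\ref{equiva} ``with the roles of $\theta$ and $\nu$ exchanged'' is unnecessary once the substitution is carried out correctly: the resulting sequence is already in the exact form required. The paper does use Corollary~\ref{equiva}, but only in the discussion \emph{after} Proposition~\ref{trans}, to explain why the present statement \emph{implies} the older formulations in \cite[Theorem~3.8(ii)]{ckmr} and \cite{cerda}. Second, and related, your closing remark that ``the content here is exactly \cite[Theorem~3.8(ii)]{ckmr}'' contradicts the paper's explicit point: the authors stress that Proposition~\ref{trans} is ``not, in principle'' the same statement, because the target $\Phi_\theta(\ker\Phi_\nu)$ with its quotient norm is a priori finer than $X_\theta$; the two are reconciled only through Corollary~\ref{equiva}, and the form of Proposition~\ref{trans} is what later gives the sharper estimate of Proposition~\ref{cerda}. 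Keeping that distinction is precisely the point of formulating the commutator theorem without compatibility.
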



It is not necessary to formulate the symmetric form as we observed in Example \ref{trans}. Proposition \ref{trans} is not, in principle,  \cite[Theorem 3.8 (ii)]{ckmr} or \cite[Cor. 4.3]{cerda} since those results establish that the commutator map $[\tau, \mathscr R_{\theta, \nu}]:  X_\nu \to  X_\theta $ is bounded, as it is obvious since $\mathscr R_{\theta, \nu}:  X_\nu \to  X_\theta $ is bounded. However, by Corollary \ref{equiva},
$\Phi_{\theta}(\ker \Phi_{\nu}) = X_\theta$ and thus also  $\mathcal R_{\theta, \nu}:  X_\nu \to  \Phi_\theta(\ker \Phi_\nu)$ is bounded. The estimate one obtains in this form for the commutator is however more interesting:
\begin{prop}\label{cerda}
$$\left \|[\tau,  \mathscr R_{\theta, \nu}]: X_\nu\to \Phi_\theta(\ker \Phi_\nu)\right\|\leq g(\ker \Phi_\nu, \ker \Phi_\theta)$$
\end{prop}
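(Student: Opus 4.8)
The plan is to obtain the estimate directly from an explicit formula for the commutator. Fix $x\in X_\nu$ and put $f=B_{\Phi_\nu}(x)$, so that $\mathscr R_{\theta,\nu}(x)=\Phi_\theta f$ and $\|f\|_{\mathcal H}\le(1+\varepsilon)\|x\|_\nu$. Applying the interpolator identity $\tau\circ\Phi_\theta=\Phi_\theta\circ T$ gives
\[
[\tau,\mathscr R_{\theta,\nu}](x)=\tau\big(\Phi_\theta B_{\Phi_\nu}(x)\big)-\Phi_\theta B_{\Phi_\nu}(\tau x)=\Phi_\theta\big(TB_{\Phi_\nu}(x)-B_{\Phi_\nu}(\tau x)\big).
\]
Set $g_x:=TB_{\Phi_\nu}(x)-B_{\Phi_\nu}(\tau x)$. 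Since $\Phi_\nu\big(TB_{\Phi_\nu}(x)\big)=\tau\,\Phi_\nu B_{\Phi_\nu}(x)=\tau x=\Phi_\nu\big(B_{\Phi_\nu}(\tau x)\big)$, the element $g_x$ lies in $\ker\Phi_\nu$; hence $[\tau,\mathscr R_{\theta,\nu}](x)=\Phi_\theta(g_x)\in\Phi_\theta(\ker\Phi_\nu)$, which is precisely the content of the commutative diagram for translation maps above, and one has the crude bound $\|g_x\|_{\mathcal H}\le(1+\varepsilon)\big(\|T\|+\|\tau:X_\nu\to X_\nu\|\big)\|x\|_\nu$, i.e.\ $\|g_x\|_{\mathcal H}\le(1+\varepsilon)\|x\|_\nu$ under the usual normalization in which $\tau$ and its transport $T$ are contractions.

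The point now is that $g_x$ belongs to $\ker\Phi_\nu$, so the gap enters through its very definition: $\dist_{\mathcal H}\big(g_x,\ker\Phi_\theta\big)\le g(\ker\Phi_\nu,\ker\Phi_\theta)\,\|g_x\|_{\mathcal H}$. Choosing $k\in\ker\Phi_\theta$ with $\|g_x-k\|_{\mathcal H}$ arbitrarily close to this distance, we have $\Phi_\theta(g_x)=\Phi_\theta(g_x-k)$, and since $\Phi_\theta(\ker\Phi_\nu)=X_\theta$ with equivalent norms (Corollary \ref{equiva}) this already yields a bound of the form $\|[\tau,\mathscr R_{\theta,\nu}](x)\|\le C\,g(\ker\Phi_\nu,\ker\Phi_\theta)\,\|x\|_\nu$. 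To remove the constant and recover exactly $g(\ker\Phi_\nu,\ker\Phi_\theta)$ one measures $\Phi_\theta(\ker\Phi_\nu)$ with the quotient norm inherited from $\ker\Phi_\nu$, i.e.\ $\|\Phi_\theta(g_x)\|_q=\dist_{\mathcal H}\big(g_x,\ker\Phi_\nu\cap\ker\Phi_\theta\big)$; for this one relocates the representative $g_x-k$ back inside $\ker\Phi_\nu$ by means of the bounded decomposition $\mathcal H=\ker\Phi_\nu+\ker\Phi_\theta$ supplied by the Lemma used to prove Corollary \ref{equiva} (equivalently, by the topological version of Theorem \ref{main}): writing $g_x-k=a+b$ with $a\in\ker\Phi_\nu$, $b\in\ker\Phi_\theta$ and $\|a\|_{\mathcal H}$ controlled by $\|g_x-k\|_{\mathcal H}$, one has $\Phi_\theta a=\Phi_\theta(g_x)$ with $a$ an admissible representative, and the estimate follows after normalizing $\|\tau\|\le 1$.

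I expect the genuinely delicate step to be exactly this last one: the obvious preimage $g_x$ of the commutator value has $\mathcal H$-norm only of the order of $\|x\|_\nu$, so one must actually exhibit a preimage lying in $\ker\Phi_\nu$ whose norm is controlled by the gap rather than merely by $\|x\|_\nu$, and this forces a quantitative use — in the spirit of Proposition \ref{prop:gap-stab} — of the fact that $\ker\Phi_\nu$ and $\ker\Phi_\theta$ sum to $\mathcal H$ with a bounded decomposition. In the concrete differential-method situation of Example \ref{ex:ckmr} the relevant constant can be read off from the conformal map $\varphi$ appearing in that Lemma. Everything else — the commutator identity, the membership $g_x\in\ker\Phi_\nu$, and the elementary bound on $\|g_x\|_{\mathcal H}$ — is routine.
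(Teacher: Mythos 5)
Your first paragraph and the opening of the second reproduce the paper's entire proof correctly: $[\tau,\mathscr R_{\theta,\nu}](x)=\Phi_\theta(g_x)$ with $g_x=TB_{\Phi_\nu}(x)-B_{\Phi_\nu}(\tau x)\in\ker\Phi_\nu$, and then, since $g_x\in\ker\Phi_\nu$, the gap gives $\dist(g_x,\ker\Phi_\theta)\le g(\ker\Phi_\nu,\ker\Phi_\theta)\,\|g_x\|_{\mathcal H}$, which together with $\|g_x\|_{\mathcal H}\le 2\|T\|\|B_{\Phi_\nu}\|\,\|x\|_\nu$ is exactly the chain of inequalities in the paper. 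Note that the paper measures the commutator value in $\|\cdot\|_{\Phi_\theta(\mathcal H)}$, that is, $\dist(\cdot,\ker\Phi_\theta)$, and its displayed estimate carries the constant $2\|T\|\|B_{\Phi_\nu}\|$; the Proposition's statement silently suppresses that constant, so your unease about the factor is justified but applies equally to the paper as written.

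The remainder of your second paragraph and your final paragraph are a misdirection that the paper does not take and that does not achieve what you claim. Passing to the quotient norm $\dist(\cdot,\ker\Phi_\nu\cap\ker\Phi_\theta)$ on $\Phi_\theta(\ker\Phi_\nu)$ and relocating $g_x-k$ back into $\ker\Phi_\nu$ via the bounded decomposition $\mathcal H=\ker\Phi_\nu+\ker\Phi_\theta$ does not remove the constant: the decomposition supplied by Theorem~\ref{main}(1') only gives $\|a\|_{\mathcal H}\le C'\|g_x-k\|_{\mathcal H}$ for some $C'>0$ coming from the open mapping theorem, so you trade one constant for another; and since $\ker\Phi_\nu\cap\ker\Phi_\theta\subset\ker\Phi_\theta$, the quotient norm $\dist(\cdot,\ker\Phi_\nu\cap\ker\Phi_\theta)$ is \emph{larger} than $\dist(\cdot,\ker\Phi_\theta)$, so you have made the estimate harder, not sharper. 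The step you flag as ``genuinely delicate'' --- exhibiting a small preimage of the commutator value lying inside $\ker\Phi_\nu$ --- is a problem you introduced; the paper never needs it because the gap $g(\ker\Phi_\nu,\ker\Phi_\theta)$ is, by its very definition, a bound on $\dist(h,\ker\Phi_\theta)$ for unit vectors $h\in\ker\Phi_\nu$, and $g_x$ is already such an $h$. Your first paragraph, read as a proof in the $\Phi_\theta(\mathcal H)$-norm, is the whole argument.
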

\begin{proof}
From the last estimate in the proof of Theorem \ref{th:commutator} we get
\begin{eqnarray*}
\left\|[\tau,  \mathscr R_{\theta, \nu} ](x)\right\|_{\Phi_\theta(\mathcal H)} &\leq& \left\|\Phi_\theta T B_{\Phi_\nu}(x)
- \Phi_\theta B_{\Phi_\nu} \tau x \right\|_{\Phi_\theta(\mathcal H)} \\
&=&\mathrm{dist} \left(T B_{\Phi_\nu} (x) -  B_{\Phi_\nu} \tau x , \ker \Phi_\theta\right)\\
&\leq & \|T B_{\Phi_\nu} (x) - B_{\Phi_\nu} \tau x \|_{\mathcal H}\;\; g(\ker \Phi_\theta, \ker \Phi_\theta)\\
&\leq & 2\|T\|\| B_{\Phi_\nu}\|\|x\|_\nu \;\; g(\ker \Phi_\nu, \ker \Phi_\theta)\\
\end{eqnarray*}
since $T B_{\Phi_\nu} (x) - B_{\Phi_\nu} \tau x \in \ker \Phi_\nu$.
\end{proof}

This estimate is similar to \cite[Corollary 4.2]{cerda} although Cerd\`{a} uses an adaptation of the Krugljak-Milman
metric \cite{km} (see Section \ref{sect:stability}).

\section{Stability issues}\label{sect:stability}

We now enter into stability issues; namely, what occurs when passing from a pair of interpolators
$(\Psi_t, \Phi_t)$ to another one which is close in some sense.
We will consider the Krugljak-Milman metric \cite{km} $\rho(\Phi,\Psi)= \sup_{\|f\|\leq 1}\left|\Phi(f)-\Psi(f)\right|$
in the equivalent form
$$
g(\Phi, \Psi) = g(\ker \Phi, \ker \Psi).
$$
One has

\begin{prop}
Let $(\Psi, \Phi)$ be a pair of interpolators such that $\mathcal{H}=\ker\Psi+\ker\Phi$.
Then there exists $C>0$ such that if $(\Psi_1, \Phi_1)$ satisfies $g(\ker\Psi,\ker\Psi_1)<C$ and
$g(\ker\Phi,\ker\Phi_1)<C$ then $\mathcal{H}=\ker\Psi_1+\ker\Phi_1$.
\end{prop}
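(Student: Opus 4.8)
The plan is to obtain the statement as a direct application of the gap-stability result Proposition \ref{prop:gap-stab}(2) to the pair of closed subspaces $M=\ker\Psi$ and $N=\ker\Phi$ of the Banach space $\mathcal H$.

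First I would observe that the hypothesis $\mathcal H=\ker\Psi+\ker\Phi$ says exactly that $\ker\Psi+\ker\Phi$ is a closed subspace of $\mathcal H$; hence, by the criterion recalled just before Proposition \ref{prop:gap-stab} (\cite[Theorem IV.4.2]{Kato:80}), both minimum gaps $\gamma(\ker\Psi,\ker\Phi)$ and $\gamma(\ker\Phi,\ker\Psi)$ are strictly positive. I would therefore set
\[
R=\tfrac12\min\{\gamma(\ker\Psi,\ker\Phi),\gamma(\ker\Phi,\ker\Psi)\}>0,\qquad C=R/2 .
\]

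Next, given $(\Psi_1,\Phi_1)$ with $g(\ker\Psi,\ker\Psi_1)<C$ and $g(\ker\Phi,\ker\Phi_1)<C$, I would use the symmetry of the gap, $g(M,N)=g(N,M)$, to rewrite these as $g(\ker\Psi_1,\ker\Psi)<C$ and $g(\ker\Phi_1,\ker\Phi)<C$, so that
\[
g(\ker\Psi_1,\ker\Psi)+g(\ker\Phi_1,\ker\Phi)<2C=R .
\]
Since $\ker\Psi_1$ and $\ker\Phi_1$ are closed (as kernels of continuous operators) and $\mathcal H=\ker\Psi+\ker\Phi$, part (2) of Proposition \ref{prop:gap-stab}, applied with $M=\ker\Psi$, $N=\ker\Phi$, $M_1=\ker\Psi_1$, $N_1=\ker\Phi_1$, yields $\ker\Psi_1+\ker\Phi_1=\mathcal H$, which is the claim. (Alternatively one may phrase the conclusion through Theorem \ref{main}: the perturbed pair satisfies condition (1), hence also the uniform decomposition (1').)

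There is essentially no serious obstacle here: the argument is just an assembly of facts already available in the excerpt. The only points needing a moment of care are the strict positivity of the minimum gaps — which is precisely where the hypothesis $\mathcal H=\ker\Psi+\ker\Phi$ (equivalently, closedness of the sum) enters — and the symmetry $g(M,N)=g(N,M)$, used to match the two one-sided smallness hypotheses on $\Psi_1$ and $\Phi_1$ to the single summed condition appearing in Proposition \ref{prop:gap-stab}.
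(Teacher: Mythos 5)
Your proof is correct and is exactly the intended application of Proposition \ref{prop:gap-stab}(2); the paper's own proof is simply the one-line remark that it is an application of that proposition, and you have just made explicit the bookkeeping (positivity of the minimum gaps via closedness of the sum, symmetry of $g$, and the choice $C=R/2$) that the paper leaves implicit.
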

\begin{proof}
It is an application of Proposition \ref{prop:gap-stab}.
\end{proof}

\begin{lemma}\label{equal}
Let $(\Psi, \Phi)$ be two interpolators on $\mathcal H$. Then
$$
g(\Psi(\ker \Phi), \Phi(\ker \Psi)) = g(\ker \Phi, \ker \Psi).
$$
\end{lemma}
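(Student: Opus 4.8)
The plan is to identify $\Psi(\ker\Phi)$ and $\Phi(\ker\Psi)$, with their quotient norms, with closed subspaces of a single Banach space, and then to use the fact that the gap between two closed subspaces is unchanged when one quotients out by their intersection. Put $L=\ker\Phi\cap\ker\Psi$, a closed subspace of $\mathcal H$. Recall (the quotient norm construction of Section \ref{firstresults}) that $\Psi\colon\ker\Phi\to\Sigma$ has kernel $L$ and induces an isometric isomorphism $\bar\Psi\colon\ker\Phi/L\to\Psi(\ker\Phi)$, $g+L\mapsto\Psi g$; likewise $\Phi|_{\ker\Psi}$ induces an isometric isomorphism $\bar\Phi\colon\ker\Psi/L\to\Phi(\ker\Psi)$. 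Since $L\subseteq\ker\Phi$, the norm of $g+L$ in $\ker\Phi/L$ equals $\dist_{\mathcal H}(g,L)$, that is, its norm in $\mathcal H/L$; similarly for $\ker\Psi/L$. Hence $\ker\Phi/L$ and $\ker\Psi/L$ are isometrically closed subspaces of $\mathcal H/L$, and through $\bar\Psi,\bar\Phi$ we may regard $\Psi(\ker\Phi)$ and $\Phi(\ker\Psi)$ as these subspaces of $\mathcal H/L$; under this identification $g(\Psi(\ker\Phi),\Phi(\ker\Psi))=g(\ker\Phi/L,\ker\Psi/L)$.

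It therefore suffices to prove the general fact and apply it with $W=\mathcal H$, $M=\ker\Phi$, $N=\ker\Psi$: \emph{if $M,N$ are closed subspaces of a Banach space $W$ and $L=M\cap N$, then $g(M/L,N/L)=g(M,N)$}, the first gap computed in $W/L$ and the second in $W$. Since $L\subseteq N$, for $x\in M$ and $l\in L$ one has $\dist_W(x-l,N)=\dist_W(x,N)$, and hence, for $\hat x=x+L$,
$$\dist_{W/L}(\hat x,N/L)=\inf_{n\in N}\dist_W(x-n,L)=\dist_W(x,N),$$
which in particular does not depend on the representative $x$. Now I compare the relevant suprema over unit spheres. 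If $\hat x\in M/L$ with $\|\hat x\|=1$ and $\varepsilon>0$, choose a representative $x\in M$ with $\|x\|_W\le 1+\varepsilon$ (possible because $\dist_W(x,L)=1$); then $x/\|x\|_W\in S_M$ and $\dist_W(x/\|x\|_W,N)\ge\dist_W(x,N)/(1+\varepsilon)=\dist_{W/L}(\hat x,N/L)/(1+\varepsilon)$, so $\sup_{x\in S_M}\dist_W(x,N)\ge\sup_{\hat x\in S_{M/L}}\dist_{W/L}(\hat x,N/L)$. Conversely, let $x\in S_M$: if $x\in L$ then $\dist_W(x,N)=0$ since $L\subseteq N$; if $x\notin L$ then $d:=\dist_W(x,L)\in(0,1]$, the class $(x/d)+L$ lies in $S_{M/L}$, and $\dist_{W/L}((x/d)+L,N/L)=\dist_W(x,N)/d\ge\dist_W(x,N)$. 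Thus $\sup_{x\in S_M}\dist_W(x,N)=\sup_{\hat x\in S_{M/L}}\dist_{W/L}(\hat x,N/L)$; exchanging the roles of $M$ and $N$ gives the corresponding equality for the other pair of suprema in the definition of the gap, so $g(M/L,N/L)=g(M,N)$.

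The argument is essentially bookkeeping with quotient norms, and I do not expect a genuine obstacle. The only points that need a little care are the normalization of representatives used to match the two unit spheres — the passage $x\mapsto x/\|x\|_W$ after choosing $\|x\|_W\le 1+\varepsilon$, and the rescaling by $d=\dist_W(x,L)$ — together with the observation that a class contained in $L$ contributes nothing to either gap.
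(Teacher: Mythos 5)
Your proof is correct and follows essentially the same route as the paper: identify $\Psi(\ker\Phi)$ and $\Phi(\ker\Psi)$, with their quotient norms, as the subspaces $\ker\Phi/L$ and $\ker\Psi/L$ of $\mathcal H/L$ (where $L=\ker\Phi\cap\ker\Psi$), and then show that passing to the quotient by $L$ does not change the gap. The paper's argument stops at the pointwise equality $\dist(\tilde f,\Psi(\ker\Phi))=\dist(f,\ker\Phi)$ for $f\in\ker\Psi$ and immediately concludes; your version adds the normalization bookkeeping (choosing representatives with $\|x\|\le 1+\varepsilon$, rescaling by $d=\dist(x,L)$) needed to match the two unit spheres, which the paper's proof leaves implicit. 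That extra care is appropriate, since the pointwise distance equality alone does not literally identify the two suprema without it.
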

\begin{proof}
Here, as in Proposition \ref{isomorphism}, we identify $\Psi(\ker \Phi)$ and $\Phi(\ker \Psi)$ with the
subspaces $\ker\Phi/(\ker\Psi\cap\ker\Phi)$ and $\ker\Psi/(\ker\Psi\cap\ker\Phi)$ of
$X_{\Psi,\Phi}=\mathcal{H}/(\ker\Psi\cap\ker\Phi)$.
Moreover, given $f\in \mathcal{H}$, we denote $\tilde f= f+(\ker\Psi\cap\ker\Phi)\in X_{\Psi,\Phi}$.

For $f\in \ker\Psi$, we have
\begin{eqnarray*}
\dist\left(\tilde f,\Psi(\ker \Phi)\right) &=& \inf \{\|\tilde  f-\tilde g\| : \tilde g \in \Psi(\ker \Phi)\}\\
  &=& \inf \{\|f- g-h\| : g \in\ker \Phi, h\in \ker\Psi\cap\ker\Phi\}\\
  &=& \inf \{\|f- g\| : g \in\ker \Phi\}= \dist\left(f,\ker \Phi\right).
\end{eqnarray*}
Thus $\delta(\Phi(\ker \Psi),\Psi(\ker \Phi))= \delta(\ker\Phi, \ker \Psi)$, which implies the equality.
\end{proof}

Lemma \ref{equal} means that the interpolators are at the same distance no matter if acting on $\mathcal H$
or in $X_{\Psi,\Phi}$: observe that the quotient map $p: X_{\Psi, \Phi}\to X_\Phi$ in diagram (1) can be thought of as the map induced by $\Phi$ and then $\ker p = \ker \Phi_{|X_{\Psi, \Phi}} = \Psi(\ker \Phi)$.

\subsection{Continuous families}

\adef A family $(\Phi_d)_{d\in D}$ of interpolators $\mathcal H$ will be called continuous if
$$\lim_{t\to s} g(\Phi_t, \Phi_s) = 0.$$
A family of pairs $(\Psi_d, \Phi_d)_{d\in D}$ will be called \emph{bicontinuous} if $(\Phi_d)_{d\in D}$ is continuous and
$$
\lim_{t\to s} g\big(\ker \Psi_t \cap \ker \Phi_t, \ker \Psi_s \cap \ker \Phi_s\big)= 0.
$$
\zdef

Continuous interpolation methods immediately yield stability results:

\begin{prop}\label{prop:trans}
Let $(\Phi_d)_{d\in D}$ be a continuous family of interpolators on $\mathcal H$.
Let $\mathcal R_{t,s} = \Phi_t B_{\Phi_s} $ be the translation map.
Each $s\in D$ has a neighborhood $V$ such that for each $t\in V$ the differential
$\mathcal R_{t,s}$ is trivial if and only if $\mathcal R_{s,t}$ is trivial.
\end{prop}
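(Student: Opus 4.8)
The plan is to realize both $\mathcal R_{t,s}$ and $\mathcal R_{s,t}$ as the derivations attached to two exact sequences that share one and the same middle Banach space, to translate triviality into the complementation of a subspace of that space, and then to let Lemma~\ref{equal} and Proposition~\ref{prop:gap-stab} finish the job.

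First I would record that $\mathcal R_{t,s}=\Phi_t B_{\Phi_s}=\Omega_{\Phi_t,\Phi_s}$ and $\mathcal R_{s,t}=\Phi_s B_{\Phi_t}=\Omega_{\Phi_s,\Phi_t}$. By Proposition~\ref{isomorphism} applied to $(\Psi,\Phi)=(\Phi_t,\Phi_s)$, the exact sequence generated by $\mathcal R_{t,s}$ is equivalent to the lower row of diagram~(\ref{psidiagram}), that is, to
\[
0\longrightarrow \Phi_t(\ker\Phi_s)\longrightarrow X_{\Phi_t,\Phi_s}\longrightarrow X_{\Phi_s}\longrightarrow 0,
\]
and symmetrically the one generated by $\mathcal R_{s,t}$ is equivalent to $0\to\Phi_s(\ker\Phi_t)\to X_{\Phi_s,\Phi_t}\to X_{\Phi_t}\to 0$. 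Since the kernels of $(\Phi_t,\Phi_s)$ and of $(\Phi_s,\Phi_t)$ both equal $\ker\Phi_s\cap\ker\Phi_t$, exactly as in the proof of Lemma~\ref{equal} the spaces $X_{\Phi_t,\Phi_s}$ and $X_{\Phi_s,\Phi_t}$ are identified isometrically with $X:=\mathcal H/(\ker\Phi_s\cap\ker\Phi_t)$; under this identification the left-hand subspaces become the closed subspaces $M:=\ker\Phi_s/(\ker\Phi_s\cap\ker\Phi_t)$ and $N:=\ker\Phi_t/(\ker\Phi_s\cap\ker\Phi_t)$ of $X$. Since an exact sequence splits exactly when its left-hand term is complemented in the middle space (Section~\ref{firstresults}), and equivalent sequences split simultaneously, the statement reduces to: on some neighbourhood of $s$, $M$ is complemented in $X$ if and only if $N$ is.

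Now I would bring in Lemma~\ref{equal}, which gives the exact equality $g(M,N)=g(\ker\Phi_s,\ker\Phi_t)=g(\Phi_t,\Phi_s)$ — crucially, $M$, $N$ and the gap are all computed inside the single space $X$ — together with the continuity hypothesis $\lim_{t\to s}g(\Phi_t,\Phi_s)=0$. Hence, choosing $V$ so that $g(\Phi_t,\Phi_s)$ is small for every $t\in V$, the subspaces $M$ and $N$ of $X$ are as close in the gap as we wish, and Proposition~\ref{prop:gap-stab} — the openness of the complementation property with respect to the gap — lets us turn a decomposition $X=M\oplus P$ into $X=N\oplus P$ and vice versa. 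Therefore $\mathcal R_{t,s}$ is trivial if and only if $\mathcal R_{s,t}$ is, for all $t\in V$.

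The point that has to be handled with care — and which I would regard as the main obstacle — is the uniformity of the gap radius. The radius supplied by Proposition~\ref{prop:gap-stab} from a splitting $X=M\oplus P$ involves $\gamma(M,P)$ and $\gamma(P,M)$, hence depends on the (varying) decomposition, and a priori it could shrink to $0$ as $t\to s$, which would be fatal to the choice of a single $V$. I would get around this by appealing to the threshold-uniform form of gap stability — if $g(M,N)<1$ then some bounded invertible operator of $X$ maps $M$ onto $N$ (see \cite[Chapter~IV]{Kato:80}) — which, because $g(M,N)=g(\Phi_t,\Phi_s)\to 0$, applies on the fixed neighbourhood $\{t:\ g(\Phi_t,\Phi_s)<1\}$ of $s$; an automorphism of $X$ carrying $M$ to $N$ transports the property of being complemented in both directions, and the proof is complete.
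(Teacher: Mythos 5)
Your core argument is the paper's argument: realize $\mathcal R_{t,s}=\Omega_{\Phi_t,\Phi_s}$ and $\mathcal R_{s,t}=\Omega_{\Phi_s,\Phi_t}$, identify the two middle spaces with the single space $X_{t,s}=X_{s,t}$, reduce triviality of each sequence to complementation of $\Phi_t(\ker\Phi_s)$, respectively $\Phi_s(\ker\Phi_t)$, in that common space, compute the gap between these two subspaces via Lemma \ref{equal} as $g(\Phi_t,\Phi_s)$, and conclude with the gap-stability result, Proposition \ref{prop:gap-stab}. Up to your final paragraph this is a faithful reconstruction of the paper's proof.

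Your final paragraph, however, introduces a step that does not hold. You are right that the radius $R$ furnished by Proposition \ref{prop:gap-stab} from a splitting $X_{t,s}=\Phi_t(\ker\Phi_s)\oplus P$ depends on $\gamma(\Phi_t(\ker\Phi_s),P)$ and $\gamma(P,\Phi_t(\ker\Phi_s))$, hence on the $t$-dependent decomposition, and the paper's own proof passes over this in silence. But your proposed remedy — that $g(M,N)<1$ alone produces an automorphism of the ambient Banach space carrying $M$ onto $N$ — is a Hilbert-space fact: there, orthogonal projections satisfy $\|P_M-P_N\|=g(M,N)$, and $\|P_M-P_N\|<1$ gives similarity of projections. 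In a general Banach space the gap between subspaces does not control the norm distance between (non-canonical) projections onto them, and the threshold under which a complement $P$ of $M$ remains a complement of $N$ involves the projection constant of the decomposition, not the universal constant $1$; there is no such universal threshold. So this last step would fail, and it does not repair the uniformity issue you raised — it merely replaces the decomposition-dependent radius by an invalid universal one. Deleting the final paragraph and invoking Proposition \ref{prop:gap-stab} for each $t$, as the paper does, recovers the paper's proof exactly, with the same implicit uniformity question left open.
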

\begin{proof}
Recall that ``$\mathcal R_{t,s}$ is trivial" means that the sequence $0\to \Phi_t(\ker \Phi_s)\to X_{t,s} \to X_s\to 0$
splits, so that $X_{t,s} = \Phi_t(\ker \Phi_s) \oplus N$ for some closed subspace $N$ of $X_{t,s}$ and thus
$\gamma (\Phi_t(\ker \Phi_s), N)>0$. Since the family of interpolators in continuous
$\lim_{t\to s} g(\ker \Phi_t, \ker \Phi_s) = \lim_{t\to s} g(\Phi_t, \Phi_s) = 0$
and thus there is some neighborhood $V$ of $s$ so that
$$
g(\Phi_t(\ker \Phi_s), \Phi_s(\ker \Phi_t)) = g(\ker \Phi_t, \ker \Phi_s) <\gamma(\Phi_t(\ker \Phi_s, N))
$$
which means that also $\Phi_s(\ker \Phi_t)$ is complemented in $X_{t,s} = X_{s,t}$ and thus the sequence
$0\to \Phi_s(\ker \Phi_t)\to X_{s,t} \to X_t\to 0$ generated by $\mathcal R_{s,t}$ splits.
\end{proof}

\begin{prop} Fix $s\in D$.
\begin{enumerate}
\item If $(\Phi_d)_{d\in D}$ is a continuous family of interpolators then there is $\varepsilon>0$ such
that if $|t-s|<\varepsilon$ and the sequence $ 0 \to \ker \Phi_s \to \mathcal H \to X_s \to 0$ splits then
each exact sequence $0\to \ker \Phi_t\to \mathcal H \to X_t \to 0$ splits and $X_t$ is isomorphic to $X_s$
\item If $(\Psi_d, \Phi_d)_{d\in D}$ is bicontinuous then there is $\varepsilon>0$ such that if
$|t-s|<\varepsilon$ and the sequence
$0 \to \ker \Psi_s \cap \ker \Phi_s \to \mathcal H \to X_{\Psi_s,\Phi_s} \to 0$ splits then each exact
sequence $ 0 \to \ker \Psi_t \cap \ker \Phi_t  \to \mathcal H \to X_{\Psi_t,\Phi_t} \to 0$ splits
and $X_{\Psi_t,\Phi_t}$ is isomorphic to $X_{\Psi_s,\Phi_s}$.
\end{enumerate}
\end{prop}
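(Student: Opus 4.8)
The plan is to reduce both items to the last assertion of Proposition \ref{prop:gap-stab}, namely that being complemented is an open property with respect to the gap. Recall from Section \ref{firstresults} that an exact sequence $0\to Y\to X\to Z\to 0$ splits if and only if the copy of $Y$ inside $X$ is complemented, and recall from diagram (\ref{psidiagram}) that $X_{\Psi,\Phi}$ is (isometrically) $\mathcal H/\ker(\Psi,\Phi)=\mathcal H/(\ker\Psi\cap\ker\Phi)$. Thus in item (1) the hypothesis is that $\ker\Phi_s$ is complemented in $\mathcal H$, and in item (2) that $\ker\Psi_s\cap\ker\Phi_s$ is complemented in $\mathcal H$. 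In both cases the conclusion to be reached is that the corresponding closed subspace at a nearby parameter $t$ is again complemented, and with the same complement, which immediately yields both the splitting and the claimed isomorphism of quotients.

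For item (1): assume $0\to\ker\Phi_s\to\mathcal H\to X_s\to 0$ splits, so $\mathcal H=\ker\Phi_s\oplus N$ for some closed subspace $N$, and the restriction of $\Phi_s$ to $N$ is an isomorphism of $N$ onto $X_s$. Apply the ``in particular'' part of Proposition \ref{prop:gap-stab} with $Z=\mathcal H$, $M=\ker\Phi_s$ and that $N$: there is $R>0$ (depending only on $\ker\Phi_s$ and $N$, hence on the chosen splitting at $s$) such that whenever a closed subspace $M_1$ satisfies $g(M_1,\ker\Phi_s)<R$ one has $\mathcal H=M_1\oplus N$. Since $(\Phi_d)_{d\in D}$ is continuous, $g(\ker\Phi_t,\ker\Phi_s)=g(\Phi_t,\Phi_s)\to 0$ as $t\to s$, so pick $\varepsilon>0$ with $g(\ker\Phi_t,\ker\Phi_s)<R$ for $|t-s|<\varepsilon$. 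Then $\mathcal H=\ker\Phi_t\oplus N$; this makes $\ker\Phi_t$ complemented (so the sequence at $t$ splits) and the restriction of $\Phi_t$ to $N$ is an isomorphism of $N$ onto $X_t$. Hence $X_t\cong N\cong X_s$.

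For item (2) the argument is verbatim the same, replacing $\ker\Phi_t$ by $\ker\Psi_t\cap\ker\Phi_t$ throughout, $X_t$ by $X_{\Psi_t,\Phi_t}\cong\mathcal H/(\ker\Psi_t\cap\ker\Phi_t)$, and using bicontinuity, which is precisely the statement that $g\bigl(\ker\Psi_t\cap\ker\Phi_t,\ker\Psi_s\cap\ker\Phi_s\bigr)\to 0$, to produce $\varepsilon$. I do not expect a genuine obstacle here: the only points requiring mild care are (a) that $\ker\Phi_t$ (resp.\ $\ker\Psi_t\cap\ker\Phi_t$) is a \emph{closed} subspace, so that the gap is well defined and Proposition \ref{prop:gap-stab} applies, and (b) that the $\varepsilon$ must be allowed to depend on the fixed splitting at $s$ (through the constant $R$), which is consistent with the wording of the statement. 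Everything else is the bookkeeping of transporting an algebraic direct sum decomposition through the two quotient maps.
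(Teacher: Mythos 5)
Your proposal is correct and is exactly the argument the paper has in mind: the paper's own proof consists of the single line ``Use Proposition \ref{prop:gap-stab},'' and you have supplied precisely the intended details, invoking the openness of complementation with respect to the gap, continuity (resp.\ bicontinuity) to bound $g(\ker\Phi_t,\ker\Phi_s)$ (resp.\ $g(\ker\Psi_t\cap\ker\Phi_t,\ker\Psi_s\cap\ker\Phi_s)$) near $s$, and the shared complement $N$ to obtain the isomorphism of quotients.
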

\begin{proof} Use Proposition \ref{prop:gap-stab}.\end{proof}

Examples of continuous and bicontinuous families of interpolators are provided by the general differential
methods of \cite{ckmr}. Recall the description we gave of differential method. With the same notation:

\begin{prop}
The family of couples $(\Psi_d, \Phi_d)_{d\in \mathbb A}$ is bicontinuous.
\end{prop}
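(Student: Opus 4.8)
\emph{Plan of proof.} The plan is to exploit the concrete description recalled in Section~\ref{ex:ckmr}. Identifying $\mathcal H=\mathscr J(\textbf X,\overline B)$ with the corresponding space of analytic functions $f:\mathbb A\to X_0+X_1$ via $(b_n)\mapsto\sum_n z^nb_n$, one has $\Phi_s(f)=f(s)$ and $\Psi_s(f)=f'(s)$, so that $\ker\Phi_s=\{f\in\mathcal H:f(s)=0\}$ and $\ker\Psi_s\cap\ker\Phi_s=\{f\in\mathcal H:f(s)=f'(s)=0\}$. Unwinding the definition of bicontinuous family, one must check, for each fixed $s\in\mathbb A$, the two limits $\lim_{t\to s}g(\ker\Phi_t,\ker\Phi_s)=0$ and $\lim_{t\to s}g\big(\ker\Psi_t\cap\ker\Phi_t,\,\ker\Psi_s\cap\ker\Phi_s\big)=0$, where for the purposes of the limit $t$ may be taken in a fixed compact neighbourhood $K\subset\mathbb A$ of $s$.

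The only non-formal ingredient I would isolate is the following: for $s$ in the compact set $K$, the ``zero--removal'' operator $D_s$ sending $f\in\ker\Phi_s$ to the analytic function $z\mapsto f(z)/(z-s)$ maps $\ker\Phi_s$ boundedly into $\mathcal H$, with $M:=\sup_{s\in K}\|D_s\|<\infty$. In the sequence picture $D_s$ is governed by the recursion $b_n=(D_sf)_{n-1}-s\,(D_sf)_n$, so it is a bounded coordinatewise manipulation; I would deduce this from the Laurent--compatible pseudolattice structure underlying the method (\cite[Section~5]{ckmr}), and this is the step I expect to require the most care. Granting it, for $f$ with a double zero at $s$ one has $D_sf\in\ker\Phi_s$ (since $(D_sf)(s)=f'(s)=0$), hence $D_s^2f$ is defined with $\|D_s^2f\|_{\mathcal H}\le M^2\|f\|_{\mathcal H}$.

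The remainder is the elementary algebra of the factors $\tfrac{z-t}{z-s}$ together with the gap estimates of Section~\ref{firstresults}. For the first limit, given $f$ in the unit sphere of $\ker\Phi_s$, set $\tilde f=f-(t-s)D_sf\in\mathcal H$; then $\tilde f(z)=\tfrac{z-t}{z-s}f(z)$, so $\tilde f(t)=0$, i.e.\ $\tilde f\in\ker\Phi_t$, while $\|f-\tilde f\|_{\mathcal H}=|t-s|\,\|D_sf\|_{\mathcal H}\le M|t-s|$; hence $\dist(f,\ker\Phi_t)\le M|t-s|$. Exchanging the roles of $s$ and $t$ yields the reverse bound, so $g(\ker\Phi_t,\ker\Phi_s)\le M|t-s|\to0$. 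For the second limit, given $f$ in the unit sphere of $\ker\Psi_s\cap\ker\Phi_s$, write $h=D_s^2f$ and set $\tilde f=f-(t-s)\big(2D_sf+(s-t)h\big)\in\mathcal H$; using $(z-s)^2-(z-t)^2=(t-s)(2z-s-t)$ and $(z-t)=(z-s)+(s-t)$ one checks that $\tilde f(z)=\tfrac{(z-t)^2}{(z-s)^2}f(z)$, whence $\tilde f$ has a double zero at $t$, i.e.\ $\tilde f\in\ker\Psi_t\cap\ker\Phi_t$, while $\|f-\tilde f\|_{\mathcal H}\le|t-s|\big(2M+|t-s|M^2\big)$. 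Together with the symmetric estimate this gives $g\big(\ker\Psi_t\cap\ker\Phi_t,\,\ker\Psi_s\cap\ker\Phi_s\big)\to0$, and the two limits are precisely the asserted bicontinuity of $(\Psi_d,\Phi_d)_{d\in\mathbb A}$. The main obstacle, as noted, is the boundedness of $D_s$, locally uniformly in $s$; everything else is formal.
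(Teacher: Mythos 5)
Your proposal is correct and follows essentially the same route as the paper: identify $\mathcal H=\mathscr J(\textbf X,\overline B)$ with analytic functions on $\mathbb A$, multiply by $\frac{z-t}{z-s}$ (resp.\ its square) to pass from $\ker\Phi_s$ to $\ker\Phi_t$ (resp.\ from $\ker\Psi_s\cap\ker\Phi_s$ to $\ker\Psi_t\cap\ker\Phi_t$), and control the difference via the zero--removal operator $D_s$. The one ingredient you leave as an acknowledged gap --- boundedness of $D_s$, locally uniformly in $s$ --- is exactly \cite[Lemma 3.11]{ckmr}, which the paper invokes at that point; your write-up of the double-zero estimate, via $f-\tilde f=(t-s)\bigl(2D_sf+(s-t)D_s^2f\bigr)$, is in fact cleaner than the paper's (whose displayed bound in that step contains a typo).
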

\begin{proof} We use here the identification of $b=(b_n)$ with the function on $\mathbb A$ given by
$f_b(z) = \sum z^nb_n$. To prove the first part, pick $f\in \ker \Phi_s$ and define the function
$g(z) = f(z)/(z-s)$ when $z\neq s$ and $g(s)=f'(s)$.
According to \cite[Lemma 3.11]{ckmr} the function $g$, identified with its Laurent expansion
$g(z) = \sum z^n g_n$ is also al element of $\mathscr J(X, \overline B)$ and with a bound
$\|g\|\leq C\|f\|$ for a constant $C>0$ independent on $f$.
 Pick the function $(z-t)g(z)\in \ker \Phi_t$ to obtain
$$
\|f(z) - (z-t)g(z)\| = \|(z-s)g(z) - (z-t)g(z)\| = |s-t|\|g\| \leq |s-t|C\|f\|
$$
Thus $g(\ker \Phi_t, \ker \Phi_s) \leq C|t-s|$, which shows that the family $(\Phi_t)$ is continuous.

For the second part, if $f\in\ker\Psi_s\cap \ker \Phi_s$ repeat the previous argument to get the function $h\in \mathscr J(X, \overline B)$ given by $h(z) = f(z)/(z-s)$ when $z\neq s$ and
$h(s)=f'(s)$ with a bound $\|h\|\leq C\|f\|$; and then the function $g\in \mathscr J(X, \overline B)$
given by $g(z) = h(z)/(z-s)$ when $z\neq s$ and $g(s)=h'(s)$ with a bound $\|g\|\leq C\|h\|$.
Form the function $(z-t)^2g(z)\in \ker \Phi_t\cap \ker \Phi_s$ to obtain
\begin{eqnarray*}
\|f(z) - (z-t)^2g(z)\| &=& \|(z-s)^2g(z) - (z-t)^2g(z)\|\\
&=& |(z-s)^2 -(z-t)^2|\|g\|\\ &\leq& (|s|^2-|t|^2 + 2z|t-s|) C^2\|f\|
\end{eqnarray*}
Thus $\lim_{t\to s} g(\ker \Psi_t\cap \ker \Phi_t, \ker \Psi_s\cap \ker \Phi_s) =0$.
\end{proof}
The bicontinuity of couples $(\delta_t', \delta_t)_t$ associated to the complex method in the unit strip was studied in \cite{cfg}, obtaining the estimate
$$g(\cap_{0\leq j\leq n-1} \ker \delta_t^j, \cap_{0\leq j\leq n-1}\ker\delta_s^j) \leq 2(n+1) {\sf h}(t,s)$$
where ${\sf h(\cdot)}$ be the hyperbolic distance on the strip. One however has:

\begin{lemma} $g(\delta_t, \delta_t') \in \{0,1\} $
\end{lemma}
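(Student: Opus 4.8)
\emph{Proof sketch (plan).}
The first thing to notice is that a gap between closed subspaces is always at most $1$ — for a unit vector $u\in S_{\ker\delta_t}$ one has $\dist(u,\ker\delta_t')\le\|u-0\|=1$ because $0\in\ker\delta_t'$, and symmetrically — and that $g(M,N)=0$ exactly when $M=N$. So the whole content is: \emph{if $\ker\delta_t\neq\ker\delta_t'$ then $g(\delta_t,\delta_t')=1$.} I would therefore assume the two kernels are different and aim at producing, for each $\varepsilon>0$, a norm-one vector in one of them at distance $>1-\varepsilon$ from the other.

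Next I would bring in the conformal machinery. Fix a conformal map $\varphi\colon\mathbb S\to\mathbb D$ with $\varphi(t)=0$. Since $|\varphi|\equiv1$ on $\partial\mathbb S$ and $\varphi$ is scalar-valued, multiplication by $\varphi$ is a surjective linear isometry $M_\varphi\colon\mathcal H\to\ker\delta_t$, and $M_\varphi^2\mathcal H=\varphi^2\mathcal H=\ker\delta_t\cap\ker\delta_t'$. Because $f\mapsto f(t)$ and $f\mapsto f'(t)$ are metric surjections of $\mathcal H$ onto $X_{\delta_t}$ and $X_{\delta_t'}$, one has $\dist(f,\ker\delta_t)=\|f(t)\|_{X_{\delta_t}}$ and $\dist(f,\ker\delta_t')=\|f'(t)\|_{X_{\delta_t'}}$. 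Parametrising $S_{\ker\delta_t}$ as $\{\varphi h:\|h\|_{\mathcal H}=1\}$ and using $(\varphi h)'(t)=\varphi'(t)h(t)$, this yields
\[
\delta(\ker\delta_t,\ker\delta_t')=\sup_{\|h\|_{\mathcal H}=1}\|\varphi'(t)h(t)\|_{X_{\delta_t'}}=|\varphi'(t)|\cdot\sup_{w\neq0}\frac{\|w\|_{X_{\delta_t'}}}{\|w\|_{X_{\delta_t}}}.
\]
The estimate $\le 1$ here is the one coming from the extension $\tfrac{\varphi}{\varphi'(t)}B_{\delta_t}(w)$, which has derivative $w$ at $t$ and $\mathcal H$-norm close to $|\varphi'(t)|^{-1}\|w\|_{X_{\delta_t}}$; so $\|w\|_{X_{\delta_t'}}\le|\varphi'(t)|^{-1}\|w\|_{X_{\delta_t}}$.

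The core is thus to show that this last supremum equals exactly $|\varphi'(t)|^{-1}$, i.e.\ that for every $\varepsilon>0$ there is a norm-one $f\in\mathcal H$ with $f'(t)=0$ and $\|f(t)\|_{X_{\delta_t}}>1-\varepsilon$ — a near-extremal function for the interpolation norm that moreover has vanishing derivative at $t$. In all the situations treated in the paper this is immediate via constant functions: if $x_0\neq0$ satisfies $\|x_0\|_{X_0}=\|x_0\|_{X_1}$, the constant function $c_{x_0}\equiv x_0$ lies in $\ker\delta_t'$ for every $t$, has $\mathcal H$-norm $\|x_0\|_{X_0}$, and being admissible for $x_0$ forces $\|x_0\|_{X_{\delta_t}}=\|x_0\|_{X_0}$; hence $\dist\bigl(c_{x_0}/\|c_{x_0}\|,\ker\delta_t\bigr)=1$ and $g(\delta_t,\delta_t')=1$. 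In general one would combine the fundamental lemma on almost-optimal functions for $[X_0,X_1]_\theta$ with the rigidity of $\varphi$ (Schwarz--Pick), reducing to the scalar case by composing with norming functionals $\mu\in X_0^{*}\cap X_1^{*}$; in the scalar case the equality $\|\mu\circ\delta_t\|_{\mathcal H^{*}}=|\varphi'(t)|^{-1}\|\mu\circ\delta_t'\|_{\mathcal H^{*}}$ holds because the extremal scalar functions for point-evaluation and derivative-evaluation are conformal images of one another.

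The step I expect to be the main obstacle is precisely this last one: controlling the derivative at $t$ of almost-extremal functions, i.e.\ checking that the ratio of the two quotient norms really reaches the conformal factor $|\varphi'(t)|^{-1}$ — which is also what prevents an intermediate value and pins $g(\delta_t,\delta_t')$ to $\{0,1\}$. The reduction to the scalar Schwarz--Pick estimate through norming functionals is the natural route, but making the choice of $w$ (equivalently of $\mu$) uniform, so that the scalar rigidity transfers, is the delicate point; here one exploits that distinct kernels produce vectors whose interpolation norm is genuinely attained ``at the boundary of the scale'', for which composition with the appropriate functional is isometric.
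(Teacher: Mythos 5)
Your route is entirely different from the paper's and, as you yourself flag, leaves the decisive step unproved. The paper's proof is a short deduction from Lemma~\ref{equal}: write $g(\delta_t,\delta_t')=g\bigl(\delta_t'(\ker\delta_t),\delta_t(\ker\delta_t')\bigr)$, note that $\delta_t'(\ker\delta_t)=X_t$ by compatibility of the complex method, and invoke Theorem~\ref{main} to see that $\delta_t(\ker\delta_t')=X_t$ precisely when $\mathcal H=\ker\delta_t+\ker\delta_t'$; if not, $\delta_t(\ker\delta_t')$ is a proper closed subspace of $\delta_t'(\ker\delta_t)=X_t$ and Riesz's lemma forces the gap to be $1$. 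Thus the paper's dichotomy is governed by whether $\mathcal H=\ker\delta_t+\ker\delta_t'$ (splitting of the derived sequence), \emph{not} by whether the two kernels coincide.

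Your opening observation — that $g=0$ if and only if $\ker\delta_t=\ker\delta_t'$ — is correct for gaps between closed subspaces, but it makes your framing incompatible with the paper's: for the complex method the constant functions lie in $\ker\delta_t'\setminus\ker\delta_t$, so the kernels are never equal and your reading predicts $g>0$ in all cases, whereas the paper reports $g=0$ in the split case. This tension deserves attention; it is worth scrutinising exactly which identification and norm Lemma~\ref{equal} uses when it places $\delta_t'(\ker\delta_t)$ and $\delta_t(\ker\delta_t')$ inside $X_{\Psi,\Phi}$, since under that identification two distinct complemented subspaces do not have gap $0$. As for your own argument, the $\le$ direction of $\sup_w\|w\|_{X_{\delta_t'}}/\|w\|_{X_{\delta_t}}=|\varphi'(t)|^{-1}$ via the extension $\varphi B_{\delta_t}(w)/\varphi'(t)$ is fine, but the $\ge$ direction is exactly the assertion of the lemma and is left unproved: the constant-function trick only works when some $x_0\neq 0$ satisfies $\|x_0\|_{X_0}=\|x_0\|_{X_1}=\|x_0\|_{X_t}$, which is an extra hypothesis, and the reduction to the scalar Schwarz--Pick estimate through norming functionals is only gestured at. In short, the plan is a sensible direct attack via conformal rigidity, but the key extremal step is missing, and the dichotomy you are trying to prove is not the one the paper actually uses.
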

\begin{proof} The result follows from Lemma \ref{equal}: $\delta_t'(\ker \delta_t) = X_t$ by standard compatibility, while $\delta_t(\ker \delta_t')=X_t$ only when $\mathcal H = \ker \delta_t + \ker \delta_t'$ according to Theorem \ref{main}. If this is the case (the induced sequence
splits) then $g(\delta_t, \delta_t')=0$. Otherwise, $\delta_t(\ker \delta_t') $ is a proper subspace of $\delta_t'(\ker \delta_t)$.\end{proof}

\subsection{Singularity}
Recall that an exact sequence is called singular \cite{cfg} when the quotient map is a strictly singular operator.
In the particular case of exact sequences induced by a derivation $\Omega$ we will say that $\Omega$ is singular.
The paramount examples of singular exact sequences are the Kalton-Peck sequences (see Example \ref{ex:1}). The papers \cite{cfg,correa}
contain a number of criteria to determine when a derivation $\Omega$ is singular. The core in all cases is an estimate
\cite[Lemma 4.8]{cfg} (see also \cite[Lemma 2.11]{correa}) of the type

\begin{equation}\label{eme}
\rm{Average}_{\pm}\left\|\Omega_\theta\big(\sum_{i=1}^n \pm x_i\big)-\sum_{i=1}^n \Omega_\theta(\pm x_i)-
\log\frac{\pi_{X_0}(n)}{\pi_{X_1}(n)} \Big(\sum_{i=1}^n \pm x_i\Big)\right\|\leq C(\theta)
\pi_{X_0}(n)^{1-\theta} \pi_{X_1}(n)^\theta
\end{equation}

\noindent where $\pi: \rm{Ban}\to \R^\N$ is a parameter defined on a certain subclass of Banach spaces: K\"othe spaces, spaces with unconditional basis, etc. From this it follows that
if $W\subset X_\theta$ is a subspace on which the restriction $\Omega_{|W}$ is trivial then one has
\begin{equation}\label{est}\left| \log \frac{\pi_{X_0}(n)}{\pi_{X_1}(n)}\right| \pi_W (n)\leq C \pi_{X_0}(n)^{1-\theta}\pi_{X_1}(n)^{\theta}\end{equation}
from where different hypotheses on the behaviour of $\pi_{X_0}, \pi_{X_1}$ and $\pi_W$ yield different criteria to make $\Omega_\theta$ singular.

Consider now the abstract interpolation method of Cwikel, Kalton, Milman, Rochberg \cite{ckmr} briefly described as Example 1. As it is observed in \cite{ckmr}, the identification of the space $\mathscr J(\textbf X, \overline B)$ with certain analytic functions $f:\mathbb A\to X_0 + X_1$
implies that the two associated interpolators $(\Psi_s, \Phi_s)$ adopt the form $\Phi_s(f)= f(s)$ and $\Psi_s(f)= f'(s)$; i.\ e., the standard couple $(\delta_s', \delta_s)$ of the classical complex method (although in a different context). But this means that the estimates
\cite[Lemma 4.8]{cfg} and \cite[Lemma 2.11]{correa} can be reproduced almost verbatim and thus the estimates \ref{est} also holds. We present an omnibus result of this kind with three parts, each depending on a different choice of the parameter $\pi(\cdot)$ adapted to work in different situations:

\begin{itemize}
\item In the context of K\"othe spaces we consider the parameter \cite{cfg}
$$\pi^M_{X}(n) =\sup \{\|x_1+\ldots+x_n\|: \; \; \|x_j\|\leq 1\},$$ where the supremum is taken on disjointly finitely
supported families;
\item In the context of Banach spaces with Schauder basis we consider \cite{cfg} the parameter
$$\pi^A_{X}(n) =  \sup \{\|x_1+\ldots+x_n\|: \; \; \|x_j\|\leq 1;\; n<x_1<x_2< \dots < x_n\}.$$
\item In arbitrary Banach spaces we consider \cite{correa} the finite-type-like constants
$$\pi^\beta_X(n) = \inf \{ \sigma>0: \rm{Average}_\pm \left\|\sum_{j=1}^n \pm x_j\right\|\leq \sigma,\quad \|x_j\|\leq 1 \}$$
\end{itemize}

For a Banach space $X$ we denote $p_X = \sup\{p : X \text{ has type } p\}$.

\begin{prop}[\cite{cfg, correa}]\label{singu} Let $(X_0, X_1)$ be an interpolation couple and let $0<\theta<1$.
\begin{enumerate}
\item Suppose $X_0$ and $X_1$ are K\"othe function spaces so that $\lim \sup \pi^M_{X_1}(n)/ \pi^M_{X_0}(n) = \infty$, $X_\theta$ is reflexive and such that $\pi^M_W \sim \pi^M_{X_\theta}$ for every
infinite-dimensional subspace  $W\subset X_\theta$ generated by a disjoint sequence, and
$\pi^M_{X_{\theta}} \sim {\pi^M_{X_0}}^{1-\theta} {\pi^M_{X_1}}^\theta$. Then $\Omega_\theta$ is disjointly singular. In particular, on Banach spaces with unconditional basis $\Omega_\theta$ is singular.
\item Suppose $X_0$ and $X_1$ are Banach spaces with a common $1$-monotone Schauder basis. Let $1 \leq p_0 \neq p_1 \leq +\infty$, $0<\theta<1$, and
$\frac{1}{p}=\frac{1-\theta}{p_0}+\frac{\theta}{p_1}$ and assume that the spaces
$X_j$, $j=0,1$ satisfy an asymptotic upper $\ell_{p_j}$-estimate;
and that for every block-subspace $W$ of $X_\theta$, there exists a constant $C$
and for each $n$, a $C$-unconditional finite block-sequence $n < y_1 < \ldots < y_n$
in $B_W$ such that $\|y_1 + \cdots + y_n\| \geq C^{-1} n^{1/p}$
and $[y_1, \cdots, y_n]$ is  $C$-complemented in $X_\theta$. Then $\Omega_\theta$ is singular.
\item  If $(X_0, X_1)$ are arbitrary Banach spaces, assume that $X_0$ has type $p_{X_0}$ and $X_1$ has type $p_{X_1}$ and that for every infinite dimensional closed subspace $W \subset X_\theta$ has (optimal) type $p$ with $p^{-1} = (1-\theta)p_{X_0}^{-1} + \theta p_{X_1}^{-1}$. Then
    $\Omega_\theta$ is singular.\end{enumerate}
\end{prop}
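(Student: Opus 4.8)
The plan is to reduce the entire statement to the single inequality (\ref{est}), exactly as in \cite{cfg,correa}, once one checks that passing from the Calder\'on space on the strip to the differential method of \cite{ckmr} costs nothing. First I would establish the core estimate (\ref{eme}) in the present setting: as noted just before the statement, the identification of $\mathscr J(\mathbf{X},\overline B)$ with analytic functions $f\colon\mathbb A\to X_0+X_1$ turns $(\Psi_s,\Phi_s)$ into the standard couple $(\delta_s',\delta_s)$, and the homogeneous bounded selector $B_{\Phi_s}$ may be taken to be the one used for the complex method in \cite{cfg}, which respects disjointness of supports. Feeding the finitely supported extremal vectors of \cite[Lemma 4.8]{cfg} (resp. \cite[Lemma 2.11]{correa}) into the pseudolattice norms $\|\cdot\|_{\mathscr X_j(B_j)}$ and repeating the Rademacher-averaging computation verbatim yields (\ref{eme}), with $\pi$ equal to $\pi^M$, $\pi^A$, or $\pi^\beta$ according to the ambient class of spaces. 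The only features of $\mathcal H$ that enter are that it is a lattice-type function space on which one can localise to disjoint blocks and apply the quasi-triangle inequality after averaging signs, and these are precisely what the (Laurent compatible) pseudolattice hypothesis on $\mathbf{X}$ supplies; I expect this to be the only genuinely non-formal step.

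Next I would pass from (\ref{eme}) to (\ref{est}). Suppose, towards a contradiction, that $\Omega_\theta$ is not (disjointly) singular: then there is an infinite-dimensional closed subspace $W\subseteq X_\theta$ (spanned by a disjoint sequence, in case (1)) on which the pull-back of (\ref{exact-seq-bis}) splits, equivalently $\Omega_\theta|_W$ is trivial, so that $\Omega_\theta|_W-L$ is bounded for some linear $L\colon W\to\Sigma$. Replacing $\Omega_\theta$ by $\Omega_\theta-L$ in (\ref{eme}) --- the linear term cancels against $\sum_i\pm x_i$ --- gives, for admissible normalised families $x_1,\dots,x_n$ in $W$ (disjointly supported in (1), block in (2), arbitrary in (3)),
$$
\left|\log\frac{\pi_{X_0}(n)}{\pi_{X_1}(n)}\right|\,\Big\|\sum_{i=1}^{n}\pm x_i\Big\|\;\le\; C\,\pi_{X_0}(n)^{1-\theta}\,\pi_{X_1}(n)^{\theta}.
$$
Taking the average over signs and the supremum over such families --- in cases (2) and (3) using the $C$-complementation of $[y_1,\dots,y_n]$ in $X_\theta$ to transport the growth of $\pi_{X_\theta}$ down to $W$ --- produces exactly (\ref{est}) with $\pi_W$ on the left.

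Finally I would close the three cases. In each, the structural hypotheses force $\pi_W\sim\pi_{X_\theta}\sim\pi_{X_0}^{1-\theta}\pi_{X_1}^{\theta}$, so (\ref{est}) degenerates to $|\log(\pi_{X_0}(n)/\pi_{X_1}(n))|\le C$ for all $n$; the remaining hypotheses contradict this by making $|\log(\pi_{X_0}(n)/\pi_{X_1}(n))|\to\infty$ along a subsequence --- in (1) directly from $\limsup_n\pi^M_{X_1}(n)/\pi^M_{X_0}(n)=\infty$; in (2) because $p_0\ne p_1$ and the asymptotic upper $\ell_{p_j}$-estimates give $\pi^A_{X_j}(n)\approx n^{1/p_j}$, whence $|\log(\pi^A_{X_0}(n)/\pi^A_{X_1}(n))|\gtrsim|\tfrac1{p_0}-\tfrac1{p_1}|\log n$; and in (3) similarly with $\pi^\beta$, the prescribed optimal type of every subspace forcing $\pi^\beta_W(n)\approx n^{1/p}$ against $\pi^\beta_{X_j}(n)\approx n^{1/p_{X_j}}$. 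Hence no such $W$ exists and $\Omega_\theta$ is (disjointly) singular. For (1) it then remains to upgrade disjoint singularity to singularity when $X_\theta$ has an unconditional basis, which is immediate since every infinite-dimensional subspace of such a space contains an infinite-dimensional disjointly supported (block) subspace. The hard part, as flagged, is Step 1: certifying that the extremal test functions of \cite{cfg,correa}, built for the strip, genuinely live inside $\mathscr J(\mathbf{X},\overline B)$ with the required norm control.
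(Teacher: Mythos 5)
Your proposal is correct and follows exactly the route the paper takes: identify $(\Psi_s,\Phi_s)$ with $(\delta_s',\delta_s)$ via the Laurent expansion, transport the estimate (\ref{eme}) from \cite[Lemma 4.8]{cfg} and \cite[Lemma 2.11]{correa} to $\mathscr J(\mathbf{X},\overline B)$, derive (\ref{est}), and close the three cases using the appropriate parameter $\pi^M$, $\pi^A$, or $\pi^\beta$. The paper itself gives essentially no proof beyond citing \cite{cfg,correa} and remarking that the estimates carry over ``almost verbatim''; your sketch makes the cited argument explicit and correctly flags the only real work (verifying the extremal test functions of the strip sit inside $\mathscr J(\mathbf{X},\overline B)$ with the required norm control), so it matches the paper's approach.
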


We present in the same context a result in which instead of singularity of the derivation the focus is placed in the incomparability of the spaces in the scale.

\begin{prop}\label{inco}
Let $(X_0, X_1)$ be an interpolation couple of Banach spaces and let $0<\theta<1$.
\begin{enumerate}
\item Assume that $X_0$ and $X_1$ have a common unconditional basis, $\lim \sup \pi^M_{X_1}(n)/ \pi^M_{X_0}(n) = \infty$ and that for
every subspace $W\subset X_\theta$ spanned by a sequence of disjointly supported vectors $\pi^M_W(n) \sim {\pi^M_{X_0}(n)}^{1-\theta} {\pi^M_{X_1}(n)}^\theta$.
Then the spaces $X_\theta$ and $X_t$ are totally incomparable for $t<\theta$.
\item Assume that $X_0$ and $X_1$ have a common Schauder basis so that $\lim \sup \pi^A_{X_1}(n)/ \pi^A_{X_0}(n) = \infty$ and
for every block subspace $W$ of $X_\theta$ one has $\pi^A_W(n) \sim {\pi^A_{X_0}(n)}^{1-\theta} {\pi^A_{X_1}(n)}^\theta$.
Then the spaces $X_\theta$ and $X_t$ are totally incomparable for $t<\theta$.
\item Assume that $X_0$ has type $p_{X_0}$, $X_1$ has type $p_{X_1}$, and that for every closed infinite
dimensional subspace $W$ of $X_{\theta}$ the relation
$\frac{1}{p_W} = \frac{1 - \theta}{p_{X_0}} + \frac{\theta}{p_{X_1}}$ holds.
Then the spaces $X_{\theta}$ and $X_t$ are totally incomparable for $t > \theta$.
\end{enumerate}
\end{prop}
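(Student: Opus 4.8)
The plan is to prove all three incomparability statements by contradiction, assuming that $X_\theta$ and $X_t$ \emph{are} comparable, i.e.\ that there are infinite–dimensional closed subspaces $W_0\subset X_\theta$, $V_0\subset X_t$ and an isomorphism onto its image $T\colon W_0\to V_0$, and then contradicting the relevant hypothesis by computing an asymptotic parameter in the two realizations of a common subspace. In parts (1) and (2) the first move is a double gliding–hump reduction. As in the Cwikel--Kalton--Milman--Rochberg setting the common unconditional (resp.\ $1$-monotone Schauder) basis of $(X_0,X_1)$ is also a basis of $X_\theta$ and of $X_t$; by the Bessaga--Pe\l czy\'nski selection principle I may replace $W_0$ by a subspace $W=[w_n]$ with $(w_n)$ normalized and, after a small perturbation, disjointly supported (resp.\ a block sequence) in $X_\theta$, then pass to a further subsequence and perturb once more so that $(Tw_n)$ is disjointly supported (resp.\ a block sequence) in $X_t$, with $T$ still an isomorphism from $W$ onto $V:=[Tw_n]$.

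Next I compare the parameters. On the one hand, for a disjointly/block supported subspace the quantity $\pi^M_W(n)$ (resp.\ $\pi^A_W(n)$) is a supremum of norms of vectors $\sum_{i\in F_j}\mu_i w_i$ over disjoint (resp.\ successive) index sets $F_j$, and $T$ carries such a vector to $\sum_{i\in F_j}\mu_i\,Tw_i$, with norm comparable up to $\|T\|\,\|T^{-1}\|$; hence $\pi^M_W(n)\lesssim\pi^M_V(n)$ (resp.\ $\pi^A_W(n)\lesssim\pi^A_V(n)$). On the other hand, a disjointly/block supported subspace $V$ of $X_t$ satisfies $\pi^M_V(n)\le\pi^M_{X_t}(n)\lesssim\pi^M_{X_0}(n)^{1-t}\pi^M_{X_1}(n)^{t}$ (and likewise for $\pi^A$), the last estimate being the standard interpolation bound for these parameters: represent the given disjoint unit vectors by functions in $\mathcal H$ supported, at every height of the strip, on the corresponding supports, add them, and optimize the two boundary estimates through the substitution $g\mapsto\lambda^{z-t}g$ with $\lambda=\pi_{X_0}(n)/\pi_{X_1}(n)$ (this computation is in \cite{cfg}). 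Feeding in the hypothesis $\pi^M_W(n)\sim\pi^M_{X_0}(n)^{1-\theta}\pi^M_{X_1}(n)^{\theta}$ yields
\[
\pi^M_{X_0}(n)^{1-\theta}\,\pi^M_{X_1}(n)^{\theta}\ \lesssim\ \pi^M_{X_0}(n)^{1-t}\,\pi^M_{X_1}(n)^{t},
\]
that is, $\bigl(\pi^M_{X_1}(n)/\pi^M_{X_0}(n)\bigr)^{\theta-t}$ is bounded; since $t<\theta$ this contradicts $\limsup_n\pi^M_{X_1}(n)/\pi^M_{X_0}(n)=\infty$. Part (2) is proved identically, replacing $\pi^M$ by $\pi^A$ and ``disjointly supported'' by ``block''.

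Part (3) needs no basis, since optimal type is an isomorphism invariant, so $p_{W_0}=p_{V_0}$. Applying the hypothesis to $W_0\subset X_\theta$ gives $p_{W_0}^{-1}=(1-\theta)p_{X_0}^{-1}+\theta\,p_{X_1}^{-1}$, while $X_t=(X_0,X_1)_t$ has type $p$ with $p^{-1}=(1-t)p_{X_0}^{-1}+t\,p_{X_1}^{-1}$ (type interpolates for the complex/differential method), a property inherited by its subspace $V_0$, so $p_{V_0}^{-1}\le(1-t)p_{X_0}^{-1}+t\,p_{X_1}^{-1}$. Combining the two, $(1-\theta)p_{X_0}^{-1}+\theta\,p_{X_1}^{-1}\le(1-t)p_{X_0}^{-1}+t\,p_{X_1}^{-1}$; since the affine function $s\mapsto(1-s)p_{X_0}^{-1}+s\,p_{X_1}^{-1}$ is strictly monotone (as $p_{X_0}\neq p_{X_1}$), this is incompatible with $t>\theta$ once $X_0,X_1$ are labelled so that $p_{X_0}<p_{X_1}$ (the case relevant to the statement; the sign of $t-\theta$ goes the other way otherwise), a contradiction.

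The hard part, I expect, will be the double gliding–hump step in (1)--(2): one has to arrange that a \emph{single} subspace of $W_0$ is simultaneously disjointly/block supported in $X_\theta$ and, via $T$, in $X_t$, keeping both perturbations small enough that $T$ survives as an isomorphism; this is exactly where the common-basis hypotheses enter, and it is what lets the $\pi$-parameter hypothesis at $\theta$ be transported to the $X_t$-side. Once that reduction is in place, the interpolation upper bound for $\pi_{X_t}(n)$ and the bookkeeping with exponents (or, in (3), with the affine type formula) are routine.
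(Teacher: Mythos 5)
Your proof is correct and takes essentially the same route as the paper's: both argue by contradiction, use a gliding-hump reduction to a common disjoint/block sequence equivalent in $X_\theta$ and in $X_t$, compare the $\pi$-parameter hypothesis at $\theta$ with the interpolation upper bound $\pi_{X_t}(n)\lesssim\pi_{X_0}(n)^{1-t}\pi_{X_1}(n)^{t}$, and derive the contradiction from the exponent $\theta-t$. If anything your account is more detailed than the paper's (which only writes out part (1) and invokes the upper bound at $t$ tersely as ``by assumption''), and you are right to flag the implicit labelling $p_{X_0}<p_{X_1}$ needed for the direction $t>\theta$ in part (3).
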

\begin{proof}
We prove (1), the other items being proved similarly: observe that if it $X_{\theta}$ and $X_t$ are not totally incomparable we may find a
normalized block-sequence $(w_i)_i$ in $X_\theta$ which is $c$-equivalent to a block-sequence $(v_i)_i$ in $X_t$, for some $c$.
For each $n \in \N$ there is a normalized disjoint sequence $(y_i)_{i=1,\ldots,n}$ of $(w_i)_i$ and $C > 0$ such that
$\|y_1+\cdots+y_n\| \geq C^{-1}\pi^M_{X_\theta}(n)$.
The sequence $(y_i)$ is $c$-equivalent to a semi-normalized disjoint sequence $(z_i)$ in $X_t$.
By assumption
$\|z_1+\ldots+z_n\| \leq c{\pi^M_{X_0}(n)}^{1-t}{\pi^M_{X_1}(n)}^t$ (we may suppose the $c$ here is the same as before).
Therefore $C^{-1}{pi^M_{X_0}(n)}^{1-\theta}{\pi^M_{X_1}(n)}^\theta \leq Cc {\pi^M_{X_0}(n)}^{1-t}{\pi^M_{X_1}(n)}^t$ and
$$\left (\frac {\pi^M_{X_1}(n)} {\pi^M_{X_0}(n)} \right)^{\theta-t} \leq cC^2,$$
which yields a contradiction.
\end{proof}

The combination between the general results in Proposition \ref{singu} and their counterparts in Proposition \ref{inco}
suggests the following problem:

\begin{prob}\label{prob:solved}
Assume  that $X_\theta$ is totally incomparable with $X_t$ for $t \neq \theta$ in a neighborhood of $\theta$.
Is $\Omega_\theta$ singular?
\end{prob}

The negative solution to this problem is delayed to \cite{ccc} since a long and contorted digression on admissible Kalton spaces of analytic functions is required.


\begin{thebibliography}{99}



\bibitem{BL} J. Bergh, J. L\"ofstr\"om, \emph {Interpolation spaces. An introduction.} Springer, 1976.

\bibitem{cabenon} F. Cabello, \emph{Nonlinear centralizers with values in $L_0$},
Nonlinear Analysis - TMA 88 (2013) 42--50.

\bibitem{cabe2} F. Cabello, \emph{Factorization in Lorentz spaces, with an application to centralizers,}
J. Math. Anal. Appl. 446 (2017) 1372--1392.

\bibitem{cabeann} F. Cabello, \emph{Nonlinear centralizers in homology},
Math. Ann. 358 (2014) 779 -- 798.




\bibitem{cabecor} F. Cabello, W. Corrêa \emph{Quasinormed groups and Fenchel-Orlicz spaces}, to appear.

\bibitem{caceso} M. J. Carro, J. Cerda, F. Soria, \emph{Commutators in interpolation theory,}
Arkiv. Mat. 33 (1995) 199-216.


\bibitem{ccc} J.M.F. Castillo, W.H.G. Correa, F. Cabello,
\emph{Higher order derivatives of analytic families of Banach spaces},
Arxiv submit/1906.06677, 2019.

\bibitem{cfg} J.M.F. Castillo, V. Ferenczi, M. Gonz\'alez,
\emph{Singular twisted sums generated by complex interpolation}, Trans. Amer. Math. Soc.  369 (2017) 4671--4708.

\bibitem{ccfg} J.M.F. Castillo, W.H.G. Correa, V. Ferenczi, M. Gonz\'alez,
\emph{On the stability of the differential process generated by complex interpolation},
Arxiv submit/1712.09647, 2019.

\bibitem{castgonz} J.M.F. Castillo, M. Gonz\'alez,
\emph{Three-space problems in Banach space theory,} Lecture Notes in Math. 1667. Springer, 1997.

\bibitem{cerda} J. Cerd\`{a}, \emph{A note on commutator estimates for interpolation methods},
Math. Nachr. 280 (2007) 1014--1021.


\bibitem{correa} W.H.G. Corr\^ea, \emph{Type, cotype and twisted sums induced by complex interpolation},
J. Funct. Anal. 274 (2018) 659--958.



\bibitem{cjmr} M. Cwikel, B. Jawerth, M. Milman, R. Rochberg,
\emph{Differential estimates and commutators in interpolation theory.} In ``Analysis at Urbana II'',
London Math. Soc. Lecture Note Series 138, (E.R. Berkson, N.T. Peck, and J. Uhl, Eds.), pp. 170--220,
Cambridge Univ. Press, Cambridge, 1989.

\bibitem{ckmr}  M. Cwikel, N.J. Kalton, M. Milman, R. Rochberg,
\emph{A unified theory of Commutator Estimates for a class of interpolation methods},
Adv. in Math. 169 (2002) 241-312.




\bibitem{Fenchel} G. Androulakis and C.D. Cazaku and N.J. Kalton, \emph{Twisted sums, Fenchel-Orlicz spaces and property (M).} Houston J. Math. 24 (1998), no. 1, 105--126.

\bibitem{Gustavsson} J. Gustavsson and J. Peetre,
\emph{Interpolation of Orlicz spaces.} Studia Math. 60 (1977) 33--59.



\bibitem{kalt-mon}
N.J. Kalton, S. Montgomery-Smith, \emph{Interpolation of Banach spaces,}  Chapter 36 in
Handbook of the Geometry of Banach spaces vol. 2, (W.B. Johnson and J. Lindenstrauss eds.), pp. 1131--1175,
North-Holland 2003.


\bibitem{kaltpeck}
N.J. Kalton, N.T. Peck, \emph{Twisted sums of sequence spaces and the three space problem,}
Trans. Amer. Math. Soc. 255 (1979) 1--30.

\bibitem{Kato:80}
T. Kato, \emph{Perturbation theory for linear operators,}
Corrected printing of the 2nd. ed. Springer, 1980.

\bibitem{km} N. Krugljak, M. Milman,
\emph{A distance between orbits that controls commutator estimates and invertibility of operators,}
Adv. Math. 182 (2004) 78-123.




\end{thebibliography}
\end{document}